\renewcommand{\top}{\text{T}}
\def\vs{\vspace{0.2cm}}
\def\T{\text{\tiny TT}}
\DeclareMathOperator*{\argmin}{\arg\!\min}
\newtheorem{lemma}{\bf Lemma}[section]
\newtheorem{proposition}{\bf Proposition}[section]
\renewenvironment{proof}{{\noindent \em Proof.}}{\begin{flushright}
\qed\end{flushright}}
\title{Tensor rank reduction via coordinate flows}
\begin{document}
\begin{frontmatter}

\author[ucsc]{Alec Dektor}
\author[ucsc]{Daniele Venturi\corref{correspondingAuthor}}
\ead{venturi@ucsc.edu}

\address[ucsc]{Department of Applied Mathematics, 
University of California Santa Cruz\\ Santa Cruz (CA) 95064}

\cortext[correspondingAuthor]{Corresponding author}

\journal{the Journal of Computational Physics}

\begin{abstract}
Recently, there has been a growing interest in efficient numerical algorithms based on tensor networks and low-rank techniques to approximate high-dimensional functions and {  solutions to high-dimensional PDEs}. In this paper, we propose a new tensor rank reduction method {  based on coordinate transformations} that can greatly increase the efficiency of high-dimensional tensor approximation algorithms. The idea is simple: given a multivariate function, determine a coordinate transformation so that the function in the new coordinate system has smaller tensor rank. We restrict our analysis to linear coordinate transformations, which gives rise to a new class of functions that we refer to as tensor ridge functions. {  Leveraging Riemannian gradient descent on matrix manifolds we develop an algorithm that determines a quasi-optimal linear coordinate transformation for tensor rank reduction.} 
The results we present for rank reduction via linear coordinate transformations {  open the 
possibility for generalizations to 
larger classes of nonlinear transformations. Numerical applications 
are presented and discussed for linear and nonlinear PDEs.}
%We demonstrate the effectiveness of the proposed tensor rank reduction method on a prototype function approximation problem and 
%{  PDEs in dimensions two, three, and five}. 
\end{abstract}
\end{frontmatter}

\section{Introduction}
\label{sec:intro}
There has been a growing interest in efficient numerical 
algorithms based on tensor networks and low-rank techniques 
to approximate high-dimensional functions and solutions 
to high-dimensional PDEs \cite{Kolda,HeyrimJCP_2014,
tensor_high_dim_pde,Vandereycken_2019,venturi2018numerical,VenturiSpectral}.
A tensor network is a factorization of an entangled 
object such as a multivariate function or an operator, into 
a set of simpler objects (e.g., low-dimensional functions or operators) 
which are amenable to efficient representation and computation. 
The process of building a tensor network relies on a 
hierarchical decomposition of the entangled object, which,  
can be visualized in terms of trees \cite{h_tucker_geom,Falco_2016,Bachmayr}. Such a  decomposition is rooted in the spectral theory for linear operators \cite{kato}, 
and it opens the possibility to approximate high-dimensional functions 
and compute the solution of high-dimensional PDEs at 
a cost that scales linearly with respect to the dimension 
of the object and polynomially with respect to the tensor 
rank.

Given the fundamental importance of tensor rank in computations and 
its non-favorable scaling, in this paper we propose a new tensor 
rank reduction method based on coordinate flows that can
greatly increase the efficiency of high-dimensional tensor 
approximation algorithms.
To describe the method, consider the scalar field $u(\bm x)$, 
where $\bm x \in \Omega \subseteq \mathbb{R}^d$, $d>1$. 
The idea is very simple: determine an invertible  coordinate 
transformation $\bm H: \mathbb{R}^d \to \mathbb{R}^d$ 
so that the function 
\begin{equation}
\label{nonlinear_coordinates}
v(\bm x) = u(\bm H( \bm x))
\end{equation}
has smaller tensor rank than $u(\bm x)$. 
Representing a function on a transformed coordinate system 
has proven to be a successful technique for a wide 
range of applications \cite{Haoxiang,Conjugateflows,GKSS_2005}, 
{  including tensor rank reduction in 
quantum many-body problems \cite{orbital_optimization}.}
%
%However, we are unaware of any work that has 
%used coordinate transformations for the purpose 
%of reducing the tensor rank {  of multivariate functions or operators}. 
%%
%Related ideas have been used in quantum man-body problems 
%in which the discretization basis is rotated in order to reduce 
%the rank of the solution representation \cite{orbital_optimization}. 
%}
%
To illustrate the effects of coordinate transformations 
on tensor rank, in Figure \ref{fig:rotating_2D_example} 
we show that a simple two-dimensional rotation can increase the rank 
of fully separated (i.e., rank one) Gaussian function significantly. 
Vice versa, the inverse rotation can transform a rotated Gaussian with 
high tensor rank into a rank one function. Similar results 
hold of course in higher dimensions. 

Under mild assumptions on the function $u(\bm x)$, 
one may argue, e.g. using nonlinear dynamics or 
the theory of optimal mass transport, that there 
always exists a transformation $\bm H$ such that 
$v(\bm x)=u(\bm H(\bm x))$ possesses the 
smallest possible multilinear rank among 
all tensors in a given format\footnote{If we allow for 
nonlinear coordinate flows \cite{Heng2021}, then we can 
of course map any multivariate probability density function (PDF) 
into a target distribution that has rank-one. Similar results 
can be obtained via optimal mass transport, e.g., by suitable approximations of 
the Kn\"{o}the-Rosenblatt rearrangement \cite{Rosenblatt,Marzouk_2018}.
}.   
However, developing a computationally 
tractable algorithm for obtaining 
the transformation $\bm H$ given the 
function $u(\bm x)$ is not an easy task. 
The main objective of this paper is to 
develop a mathematical 
framework and computationally efficient algorithms for 
obtaining {\em quasi-optimal} tensor rank-reducing coordinate 
transformations $\bm H$. In particular, we restrict our analysis to 
linear coordinate transformations. In this setting, $\bm H$ can 
be represented by a matrix $\bm A$, which allows us to 
write \eqref{nonlinear_coordinates} 
in the simplified form 
\begin{equation}
\label{ridge}
v(\bm x) = u(\bm A \bm x).
\end{equation}
The function $v(\bm x)$ is known as a generalized ridge 
function \cite{Pinkus_ridge}. 
If $u(\bm x)$ is represented in a tensor format, i.e., a series of 
products of one-dimensional functions $\psi(x_i)$ called 
tensor modes, then $v(\bm x)$ inherits a similar series expansion.
However, under the action of the linear map $\bm A\bm x$ the tensor 
modes are no longer univariate. Instead, they take the form of ridge 
functions $\psi(\bm a_i \cdot \bm x)$, 
where $\bm a_i$ is the $i$-th row of the matrix $\bm A$. 
Since these ridge tensor modes are now $d$-dimensional, 
the tensor compression which we had for 
$u(\bm x)$ may be lost. 
\begin{figure}[!t]
\centerline{\footnotesize\hspace{.3cm} (a)  \hspace{5.4cm}    (b) \hspace{5.7cm} (c) \hspace{0.1cm}}
\centerline{
\includegraphics[scale=0.3]{./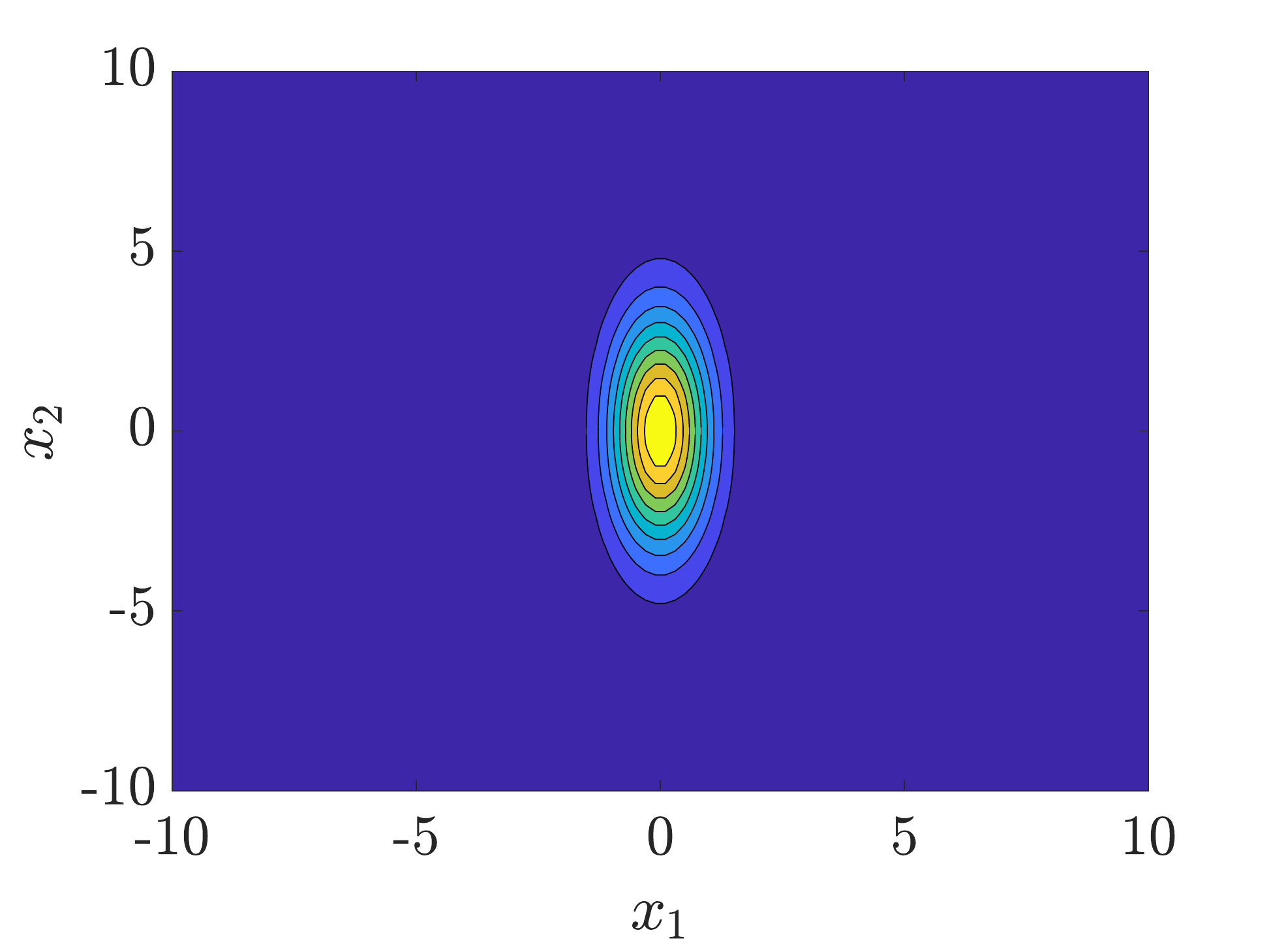}
\includegraphics[scale=0.3]{./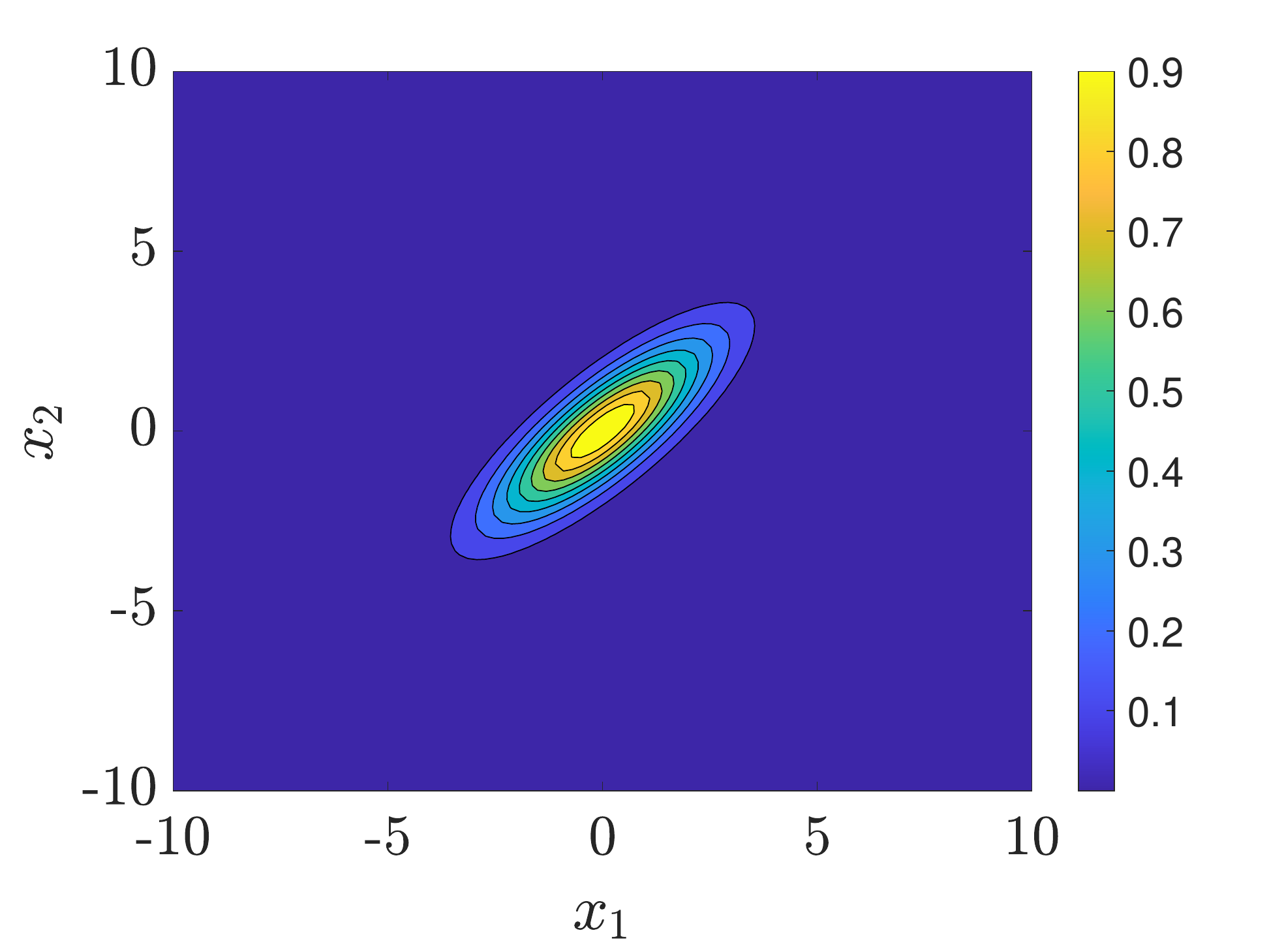}
\includegraphics[scale=0.3]{./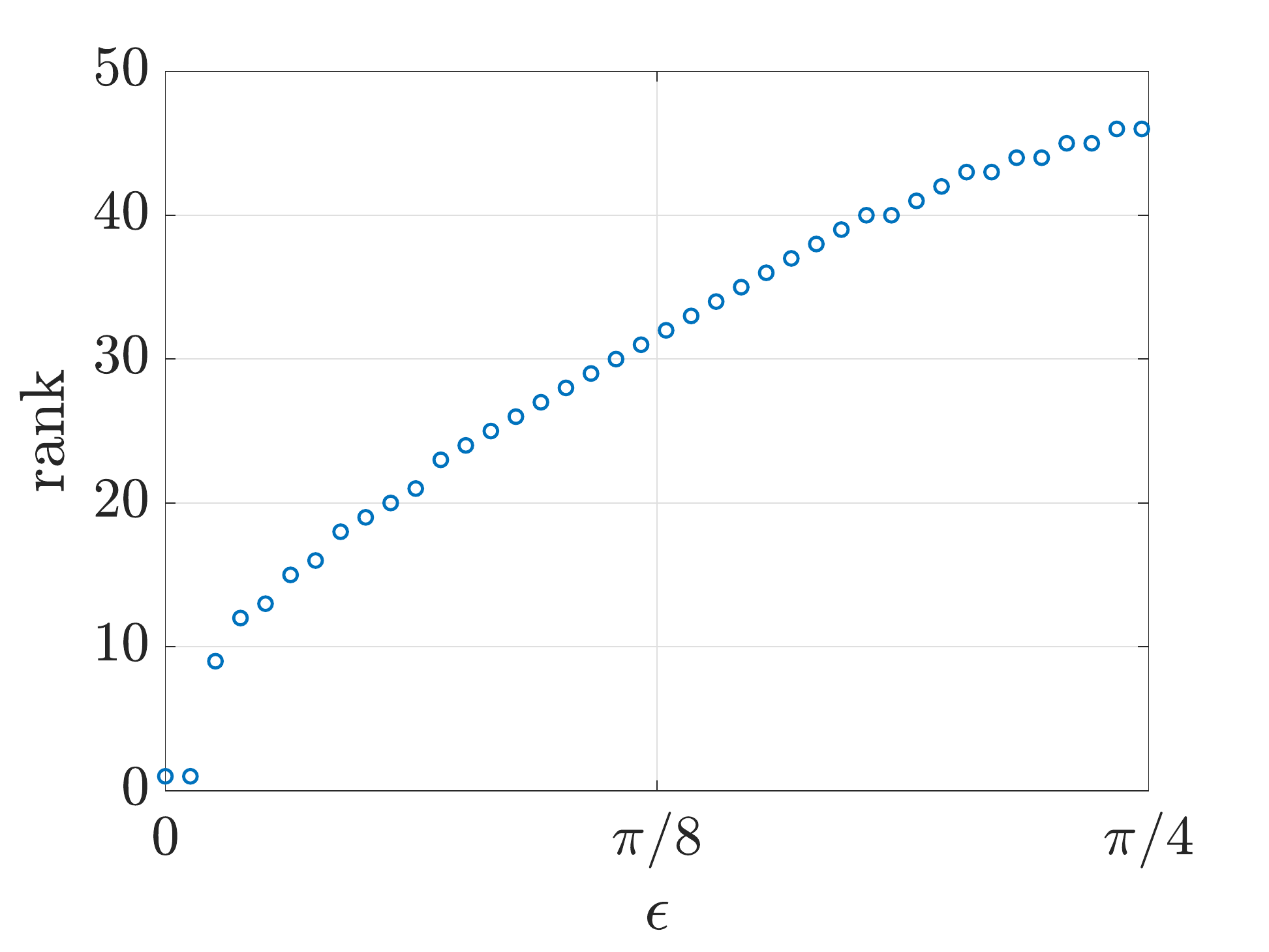}
}
\caption{(a) Two-dimensional Gaussian stretched 
along the $x_2$-axis; (b) Two-dimensional rotated Gaussian 
(clockwise rotation by $\epsilon = \pi/4$ radians);
(c) Rank of the the rotated Gaussian versus the 
clockwise rotation angle $\epsilon$. }
\label{fig:rotating_2D_example}
\end{figure}

Our contributions are as follows. 
First, we introduce a new class of functions that we refer to as 
{\em ridge tensors}, and provide a new method for evaluating $v(\bm x)$ 
without losing the tensor compression of $u(\bm x)$. 
Second, we develop a new Riemannian optimization algorithm 
for determining a linear map $\bm A$ so that 
$v(\bm x)$ has smaller tensor rank than $u(\bm x)$. 
Our approach is to parameterize the transformation $\bm A$ with 
a parameter $\epsilon\geq 0$, i.e., set $\bm A = \bm A(\epsilon)$, and 
consider $\bm A(\epsilon)$ as the flow map of an appropriate 
linear dynamical system.
From this dynamical system perspective we derive a partial 
differential equation (PDE) for the $\epsilon$-dependent 
function $v(\bm x;\epsilon) = u(\bm A(\epsilon) \bm x)$. 
We then integrate the PDE for $v(\bm x; \epsilon)$ 
using a step-truncation tensor integration scheme 
\cite{rodgers2020step-truncation,rodgers2022implicit,Vandereycken_2019} 
to obtain $v(\bm x;\epsilon)$ in a low-rank tensor format for 
all $\epsilon$. 
The quasi-optimal rank-reducing coordinate map $\bm A$ 
is obtained by minimizing a rank-related cost function 
using an appropriate path of transformations $\bm A(\epsilon)$. 
We construct such a path by giving the collection of 
linear transformations $\bm A(\epsilon)$ a Riemannian manifold 
structure and performing Riemannian gradient descent optimization, i.e., 
assigning to the derivative of the path $d \bm A(\epsilon)/d \epsilon$ 
a descent direction for the cost at each $\epsilon$ with respect to the 
chosen geometry. 
Our theoretical contributions provide a new 
framework that be generalized to larger classes of 
nonlinear transformations, and open new pathways for tensor 
rank reduction that are complementary to classical rank reduction 
methods, e.g., based on hierarchical SVD \cite{hsvd_tensors_grasedyk,Grasedyck2017}.   
Building upon our recent work on 
rank-adaptive tensor integrators 
\cite{adaptive_rank}, we also apply the new rank-reduction 
algorithm based on coordinate flows to linear PDEs. Specifically, 
we study advection problems in dimensions 
two, three and five and a reaction-diffusion equation 
in dimension two. 

This paper is organized as follows. 
In section \ref{sec:FTT} we briefly review the functional 
tensor train (FTT) expansion, which, will be the main
tensor format for demonstrating the proposed rank-reduction 
method based on coordinate flows. 
In section \ref{sec:ridge_funs} we introduce ridge tensor 
trains and provide a simple example to illustrate 
the effects of coordinate flows on tensor rank. 
In section \ref{sec:rank_reduction} we formulate 
the tensor rank reduction problem  
as a Riemannian optimization problem on the manifold of 
volume-preserving invertible coordinate 
maps. We also provide an efficient algorithm 
for computing the numerical solution to such optimization problem. 
In section \ref{sec:3Dfunction} we demonstrate the 
proposed Riemannian gradient descent 
algorithm for computing { 
quasi-optimal linear coordinate maps 
on a three-dimensional prototype function.}
In section \ref{sec:PDEs} we apply the proposed rank reduction methods to PDEs in dimensions 
two, three and five and a reaction-diffusion 
equation in dimension two. 
We also include two appendices in which we describe the
Riemannian manifold of coordinate flows, and provide 
theoretical results supporting the proposed mathematical 
framework for ridge tensors.

\section{Functional tensor train (FTT) expansion}
\label{sec:FTT}

The coordinate flow rank reduction technique 
we propose in this paper can be applied to any tensor format, 
in particular to the functional tensor train (FTT) format
\cite{OseledetsTT,Bigoni_2016,Dektor_dyn_approx}. 
In this section we briefly review the construction of 
FTT expansions for multivariate 
function belonging to the weighted Hilbert space 
\begin{equation} 
\label{hilbert_space}
H = L^2_{\mu}(\Omega).
\end{equation} 
Here, $\Omega \subseteq \mathbb{R}^d$ is a separable domain 
such as a $d$-dimensional flat torus $\mathbb{T}^d$ or a 
Cartesian product of $d$ subsets of the real 
line
\begin{equation}
\begin{aligned}
\Omega &= \bigtimes_{i=1}^d \Omega_i,\qquad \Omega_i\subseteq \mathbb{R}, 
\end{aligned}
\label{Omega}
\end{equation}
and $\mu$ is a 
finite product measure on 
$\Omega$
\begin{equation}
\mu(\bm x) = \prod_{i=1}^d \mu_i(x_i).
\end{equation}
As is well-known (e.g., \cite{OseledetsTT,Bigoni_2016,Dektor_dyn_approx}), 
each element $u \in L^2_{\mu}(\Omega)$ admits a FTT expansion of the form 
\begin{equation}
\label{FTT}
u(\bm x) = \sum_{\alpha_1,\ldots,\alpha_{d-1}=1}^{\infty} \sqrt{\lambda(\alpha_{d-1})}
\psi_1(1;x_1;\alpha_1) \psi_2(\alpha_1;x_2;\alpha_2) \cdots 
\psi_d(\alpha_{d-1};x_d;1),
\end{equation}
where $\{\psi_i(\alpha_{i-1};x_i;\alpha_i)\}_{\alpha_{i}}$ 
are orthonormal eigenfunctions of a hierarchy of 
compact self-adjoint operators. Orthonormality of 
$\{\psi_i(\alpha_{i-1};x_i;\alpha_i)\}_{\alpha_{i}}$ 
is relative to the inner products  
\begin{equation}
\begin{aligned}
    \sum_{\alpha_{i-1}=1}^\infty \int_{\Omega_i} 
    \psi_i(\alpha_{i-1}; x_i; \alpha_i)
    \psi_i(\alpha_{i-1}; x_i; \beta_i) d \mu_i(x_i) &=
     \delta_{\alpha_i \beta_i}, \qquad i = 1,\ldots, d-1, \qquad\alpha_0=1,\\
     \int_{\Omega_d} \psi_d(\alpha_{d-1};x_d;1) 
     \psi_d(\beta_{d-1};x_d;1) d \mu_d(x_d) &= 
      \delta_{\alpha_{d-1} \beta_{d-1}}.
\end{aligned}
\end{equation}
The sequence of positive real numbers 
$\left\{\sqrt{\lambda(\alpha_{d-1})}\right\}_{\alpha_{d-1}=1}^{\infty}$ appearing 
in \eqref{FTT} represents the product of all spectra of the 
compact self-adjoint operators mentioned above, 
and it has a single accumulation point at zero.
By truncating such spectra so that only the largest 
eigenvalues and corresponding eigenfunctions 
are retained, we obtain the following FTT approximation of $u(\bm x)$
\begin{equation}
\label{FTT_finite}
u_{\T}(\bm x) = \sum_{\alpha_0 = 1}^{r_0} 
\sum_{\alpha_1 = 1}^{r_1} \cdots  \sum_{\alpha_{d}=1}^{r_{d}} 
\sqrt{\lambda(\alpha_{d-1})}
\psi_1(\alpha_0;x_1;\alpha_1) \psi_2(\alpha_1;x_2;\alpha_2) \cdots 
\psi_d(\alpha_{d-1};x_d;\alpha_d),
\end{equation}
where $\bm r = (r_0, r_1, \ldots, r_{d-1}, r_d)$ is the 
tensor rank, $r_0=1$ and $r_d=1$.
At this point it is convenient to define the following
matrix-valued functions $\bm\Psi_i(x_i)$ (known as tensor cores) 
\begin{align}
\bm \Psi_1(x_1)=&
\begin{bmatrix}
\psi_1(1;x_1;1) & \cdots &
\psi_1(1;x_1;r_1) 
\end{bmatrix},\nonumber\\
\bm \Psi_i(x_i)=&
\begin{bmatrix}
\psi_i(1;x_i;1) & 
\cdots &
\psi_i(1;x_i;r_i)\\
\vdots & \ddots& \vdots\\  
\psi_i(r_{i-1};x_i;1) & 
\cdots &
\psi_i(r_{r-1};x_i;r_i)
\end{bmatrix}
\qquad i=2,\ldots, d-1,\nonumber\\
\bm \Psi_d(x_d)=&
\begin{bmatrix}
\psi_d(1;x_d;1) \\ \vdots \\
\psi_1(r_{d-1};x_d;1) 
\end{bmatrix},\nonumber
\end{align}
and write \eqref{FTT_finite} in a compact 
matrix product form 
\begin{equation}
\label{FTT_core_rep}
    u_{\T}(\bm x) = \bm \Psi_1(x_1) \bm \Psi_2(x_2) \cdots \sqrt{\bm \Lambda} \bm \Psi_d(x_d),
\end{equation}
where $\bm \Lambda$ is a diagonal matrix with entries 
$\lambda(\alpha_{d-1})$ ($\alpha_{d-1}=1,\ldots,r_{d-1}$). 
To simplify notation further, we will often suppress explicit 
tensor core dependence on the spatial variable $x_i$, allowing us 
to write $\bm\Psi_i=\bm\Psi_i(x_i)$ and $\psi_i(\alpha_{i-1},\alpha_i)=\psi_i(\alpha_{i-1};x_i;\alpha_i)$ as the spatial dependence is indicated 
by the tensor core subscript ``$i$''.

\section{Tensor ridge functions}
\label{sec:ridge_funs}
Let us introduce a new class of functions, which we call  
{\em tensor ridge functions}, that will be used in subsequent sections 
to build a tensor rank-reduction theory via linear coordinate mappings. 
To this end, consider the following invertible linear 
coordinate transformation
\begin{equation}
\label{x_to_y}
\bm y = \bm A \bm x, \qquad \bm A : \mathbb{R}^d \to \mathbb{R}^d. 
\end{equation}
If we evaluate a function $u \in L^2_{\mu}(\Omega)$ at 
$\bm y$, we obtain the generalized ridge function  
$u(\bm A \bm x)$ (e.g., \cite{Pinkus_ridge}).
Although the coordinate transformation $\bm A$ is linear, 
the evaluation of $u$ on $\bm A \bm x$ 
defines a {\em nonlinear map} of functions 
\begin{equation}
\label{evaluation_map}
\begin{aligned}
u(\bm x) &\mapsto v(\bm x) = u(\bm A \bm x).
\end{aligned}
\end{equation}
The image of a FTT tensor \eqref{FTT_finite} 
under such a map has the form 
\begin{equation}
\label{FTT_ridge}
\begin{aligned}
u_{\T}(\bm A \bm x)= \sum_{\alpha_0 = 1}^{r_0} 
\sum_{\alpha_1 = 1}^{r_1} \cdots  \sum_{\alpha_{d}=1}^{r_{d}} 
\sqrt{\lambda(\alpha_{d-1})}
\psi_1(\alpha_0;\bm a_1 \cdot \bm x;\alpha_1) 
\psi_2(\alpha_1;\bm a_2 \cdot \bm x;\alpha_2) \cdots 
\psi_d(\alpha_{d-1};\bm a_d \cdot \bm x ;\alpha_d),
\end{aligned}
\end{equation}
which we call a {\em tensor ridge function}. 
In \eqref{FTT_ridge}, $\bm a_i$ denotes the $i$-th row of the matrix $\bm A$ and ``$\cdot$'' is the 
standard dot product on $\mathbb{R}^d$, and $r_0=r_d=1$.
When we consider 
$u_\T(\bm A \bm x) = 
u_\T(\bm y)$ in coordinates 
$\bm y$, equation \eqref{FTT_ridge} is the 
FTT expansion for $u_\T(\bm y)$. 
However, when we consider $v(\bm x)=u_\T(\bm A \bm x)$ in 
coordinates $\bm x$ we have that each mode 
$\psi_i(\alpha_{i-1};\bm a_i \cdot \bm x;\alpha_i)$ in the 
tensor ridge function \eqref{FTT_ridge} 
is no longer 
a univariate function of $x_i$ as in \eqref{FTT_finite}, 
but rather a $d$-variate ridge function,
which, has the property of being constant 
in all directions orthogonal to the vector 
$\bm a_i$ (e.g., \cite{Constantine_2014,Pinkus_ridge}). 
An important problem is determining the FTT expansion 
\begin{equation}
\label{FTT_new}
v_\T(\bm x) = \sum_{\alpha_0 = 1}^{s_0} 
\sum_{\alpha_1 = 1}^{s_1} \cdots  \sum_{\alpha_{d}=1}^{s_{d}} 
\sqrt{\theta(\alpha_{d-1})}
\varphi_1(\alpha_0;x_1;\alpha_1) \varphi_2(\alpha_1;x_2;\alpha_2) \cdots 
\varphi_d(\alpha_{d-1};x_d;\alpha_d)
\end{equation}
given the FTT expansion \eqref{FTT_ridge} 
for $u_\T(\bm A \bm x)$.
A naive approach to solve this problem would be to 
recompute the FTT expansion from scratch using the methods 
of section \ref{sec:FTT}, i.e., treat $v_{\T}(\bm x)$ as 
a multivariate function and solve a sequence of 
hierarchical eigenvalue problems. 
This is not practical even for a moderate number of  
dimensions $d$ since the evaluation of $v(\bm x)$ requires 
constructing a tensor product grid in 
$d$-dimensions, and each eigenvalue problem 
requires the computation of $d$-dimensional integrals. 
Another approach is to use TT-cross approximation 
\cite{TT-cross}, which provides 
an algorithm for interpolating $d$-dimensional 
black-box tensors in the tensor train format 
with computationally complexity that scales 
linearly with the dimension $d$. 
Hereafter, we develop a new approach to compute 
the FTT expansion \eqref{FTT_new} from \eqref{FTT_ridge} 
based on coordinate flows.

\subsection{Computing tensor ridge functions via coordinate flows}
\label{sec:coord_flows}
Consider the non-autonomous linear dynamical system
\begin{equation}
\label{coordinate_dynamical_system}
\begin{cases}
\displaystyle\frac{d \bm y(\epsilon)}{d \epsilon} = 
\bm B (\epsilon) \bm y(\epsilon), \\
\bm y(0) = \bm x,
\end{cases}
\end{equation}
where $\bm y(\epsilon) \in \mathbb{R}^d$,
and $\bm B(\epsilon)$ is a given $d \times d$ matrix 
with real entries for all $\epsilon\geq 0$.
It is well-known that the solution 
to \eqref{coordinate_dynamical_system} 
can be written as 
\begin{equation}
\label{epsilon_coordinates}
\bm y(\epsilon) = \bm \Phi(\epsilon)\bm x,
\end{equation}
where 
\begin{equation}
\bm \Phi(\epsilon) = e^{\bm M(\epsilon)}
\end{equation} 
is an invertible linear mapping on $\mathbb{R}^d$ 
for each $\epsilon \geq 0$.
The matrix $\bm M(\epsilon)$ can be represented 
by the Magnus series (e.g., \cite{MSE}) 
\begin{equation}
\label{MSE}
\bm M(\epsilon) = \int_0^{\epsilon} \bm B(\epsilon_1) d \bm \epsilon_1 - \frac{1}{2	} \int_0^{\epsilon} \left\{ \int_0^{\epsilon_1} \bm B(\epsilon_2) d \epsilon_2, \bm B(\epsilon_1) \right\} d \epsilon_1 + \cdots ,
\end{equation}
where $\{ \cdot, \cdot\}$ denotes the matrix commutator
\begin{equation}
\{\bm P, \bm Q\} = \bm P \bm Q - \bm Q \bm P.
\end{equation} 
%
%Note that the flow \eqref{epsilon_coordinates} defines a 
%invertible linear coordinate mapping of the 
%form \eqref{x_to_y} 
%for each $\epsilon\geq 0$.
%
Now that we have introduced 
coordinate flows and their connection 
to linear coordinate transformations, let us 
consider the problem of determining 
the FTT expansion of 
$u_\T(\bm A \bm x)$ 
when $\bm A$ is generated by a coordinate 
flow, i.e., $\bm A = \bm \Phi(\epsilon)$ for 
some $\bm \Phi$ and some $\epsilon\geq 0$.
Differentiating $v(\bm x ; \epsilon) = u_\T(\bm \Phi(\epsilon) \bm x)$ 
with respect to $\epsilon$ yields the hyperbolic PDE 
\begin{equation}
\begin{aligned}
\displaystyle\frac{\partial v(\bm x;\epsilon)}{\partial \epsilon} & = \displaystyle\frac{\partial u_\T(\bm \Phi(\epsilon) \bm x)}{\partial \epsilon} \\
&= \nabla u_\T(\bm \Phi(\epsilon) \bm x) \cdot 
\left( \frac{\partial \bm \Phi(\epsilon)}{\partial \epsilon} \bm x\right) \\
&= \nabla u_\T(\bm \Phi(\epsilon)\bm x) \cdot \left( \bm B(\epsilon) \bm \Phi(\epsilon) \bm x \right),
\end{aligned}
\end{equation}
where in the second line we used the chain 
rule and in the third line we used equations 
\eqref{coordinate_dynamical_system} and 
\eqref{epsilon_coordinates}. 
Recalling $\bm \Phi(0) = \bm I_{d \times d}$ (identity matrix), 
we see that the initial state $v(\bm x;0) = u_\T(\bm x)$ is in 
FTT format. Thus, we have 
derived the following hyperbolic initial value 
problem for the tensor ridge function 
$v(\bm x; \epsilon)$ 
\begin{equation}
\label{PDE_for_tensor}
\begin{cases}
\displaystyle\frac{\partial v(\bm x;\epsilon)}{\partial \epsilon} =  \nabla v(\bm x;\epsilon)\cdot \left( \bm B(\epsilon) \bm \Phi(\epsilon)\bm x \right), \\
v(\bm x;0) = u_\T(\bm x),
\end{cases}
\end{equation}
with an initial condition that is given in an FTT format. 

Integrating the PDE \eqref{PDE_for_tensor} 
forward in $\epsilon$ on a FTT tensor manifold, e.g. using rank-adaptive 
step-truncation methods \cite{rodgers2020stability,rodgers2020step-truncation,Vandereycken_2019} or dynamic tensor approximation methods \cite{Dektor_dyn_approx,Dektor_2020,
adaptive_rank,Lubich_2015,Lubich_2010}, 
results in a FTT approximation $v_\T(\bm x; \epsilon)$ of the 
function $v(\bm x;\epsilon)$ for all $\epsilon \geq 0$. 
The computational cost of this 
approach for computing the FTT expansion 
of a FTT-ridge function is precisely the 
same cost as solving the hyperbolic PDE \eqref{PDE_for_tensor} 
in the FTT format, which 
in the case of step-truncation or 
dynamic approximation has computational 
complexity that scales linearly 
with $d$. Note that the accuracy of 
$v_\T(\bm x ; \epsilon)$ as an approximation 
of $v(\bm x; \epsilon)$ depends 
on the $\epsilon$ step-size and order of integration scheme 
used to solve the PDE \eqref{PDE_for_tensor}. 

Using coordinate flows, it is straightforward to 
compute the FTT expansion of a tensor ridge 
function $u_{\T}(\bm A\bm x)$ when the matrix $\bm A$ admits a 
real matrix logarithm $\bm L$. In this case, setting 
$\bm B(\epsilon)=\bm L$ yields 
$\bm \Phi(\epsilon) = e^{\epsilon \bm L}$, and therefore  
$\bm A = \bm \Phi(1)$. This means that we need 
to integrate \eqref{PDE_for_tensor} with $\bm B(\epsilon)=\bm L$ 
up to $\epsilon=1$ to obtain the FTT approximation of the 
tensor ridge function $u_{\T}(\bm A\bm x)$. Let us provide a 
simple example.

\paragraph{An example}
Consider the two-dimensional Gaussian function depicted in 
Figure \ref{fig:rotating_2D_example}(a), i.e., 
\begin{equation}
\label{2D_gaussian}
u_\T(\bm x) = e^{-x_1^2- x_2^2/10 }. 
\end{equation}
Clearly, \eqref{2D_gaussian} is the product 
of two univariate {functions} and therefore the 
 FTT tensor representation coincides with
\eqref{2D_gaussian} and has rank equal to one.
Next, consider a simple linear coordinate 
transformation $\bm \Phi(\epsilon)$ which rotates 
the $(x_1,x_2)$-plane by an angle of 
$\epsilon$ radians. 
It is well-known that 
\begin{equation}
\bm \Phi(\epsilon)=e^{\epsilon \bm L} ,
\label{rotation}
\end{equation}
where 
\begin{equation}
\label{2D_matrix_log}
\bm L =  
\begin{bmatrix}
0 & -1 \\
1 & 0
\end{bmatrix}
\end{equation}
is the infinitesimal generator of the two-dimensional rotation. The dynamical 
system \eqref{coordinate_dynamical_system} defining the coordinate 
flow $\bm y(\epsilon)=\bm \Phi(\epsilon) \bm x$ can be written as
\begin{equation}
\begin{cases}
\displaystyle \frac{d \bm y(\epsilon)}{d \epsilon} = \begin{bmatrix}
0 & -1 \\
1 & 0
\end{bmatrix} \bm y(\epsilon), \\
\bm y(0) = \bm x.
\end{cases}
\end{equation}
The tensor ridge function corresponding to the coordinate 
map $\bm \Phi(\epsilon)$ is given analytically by 
\begin{equation}
\label{rotated_gaussian}
\begin{aligned}
v(\bm x ; \epsilon) &= u_\T(\bm \Phi(\epsilon) \bm x) \\
&= e^{ -(\Phi_{11}(\epsilon) x_1 + \Phi_{12}(\epsilon) x_2)^2 } e^{-(\Phi_{21}(\epsilon) x_1 + \Phi_{22}(\epsilon) x_2)^2/10}. 
\end{aligned}
\end{equation}
The hyperbolic PDE \eqref{PDE_for_tensor}  for $v(\bm x; \epsilon)$
in this case is given by 
\begin{equation}
\label{PDE_for_tensor_rot}
\begin{cases}
\displaystyle\frac{\partial v(\bm x ; \epsilon)}{\partial \epsilon} = 
- x_2\displaystyle\frac{\partial v(\bm x ; \epsilon)}{\partial x_1}  + 
x_1\displaystyle\frac{\partial v(\bm x ; \epsilon)}{\partial x_2}, \\
v(\bm x ; 0) = u_\T( \bm x).
\end{cases}
\end{equation}
It is straightforward to verify that \eqref{rotated_gaussian} 
satisfies \eqref{PDE_for_tensor_rot}. 
Note that $v(\bm x; \epsilon)$ in 
\eqref{rotated_gaussian} 
is not an FTT tensor if 
$\epsilon \neq \pi k/2$ and $k\in \mathbb{N}$.
To compute the FTT representation of 
$v_\T(\bm x;\epsilon)$ we can solve the 
PDE \eqref{PDE_for_tensor_rot} on a tensor manifold 
using step-truncation or dynamic tensor approximation 
methods \cite{adaptive_rank,Dektor_dyn_approx,rodgers2020step-truncation,rodgers2022implicit}. Given 
the low dimensionality of the spatial domain in this 
example ($d=2$), we can also evaluate \eqref{rotated_gaussian} 
directly, and compute its FTT decomposition by solving 
an eigenvalue problem. 
In Figure \ref{fig:rotating_2D_example} (b) 
we provide a contour 
plot of $v(\bm x; \pi/4)$. 
To demonstrate the effect of rotations on tensor rank, 
in Figure \ref{fig:rotating_2D_example}(c) we plot 
the rank of $v(\bm x; \epsilon)$ versus $\epsilon$ for all 
$\epsilon\in[0,\pi/4]$. {Of} course such a plot can be mirrored 
to obtain the rank for $\epsilon\in[\pi/4,\pi/2]$, $[\pi/2,3\pi/4]$, and 
$[3\pi/4,\pi]$.

\section{Tensor rank reduction via coordinate flows}
\label{sec:rank_reduction}

The coordinate flows we introduced in the previous 
section can be used to morph a given function into 
another one that has a faster decay rate of FTT singular values, i.e., 
a FTT tensor with lower rank (after truncation). 
A simple example is the coordinate flow that rotates 
the Gaussian function in Figure \ref{fig:rotating_2D_example}(b) back to the 
rank-one state depicted in Figure \ref{fig:rotating_2D_example}(a).
This example and the examples documented in 
subsequent sections indicate that symmetry of the function relative to the new coordinate 
system plays an important role in reducing the tensor rank. 

The problem of tensor rank reduction via linear coordinate flows 
can be formulated as follows: how do we choose an invertible 
linear coordinate transformation $\bm A$ 
so that $v_\T(\bm x)\approx u_{\T}(\bm A\bm x)$ 
has smaller rank than $u_\T(\bm x)$ (eventually minimum rank)? 
The mathematical statement of this optimization problem is
\begin{equation}
\label{rank_min_problem}
\bm A= \argmin_{\bm A \in {\rm GL}_d(\mathbb{R})} {\rm rank} \left[ v_\T( \bm x) \right],
\end{equation}
where ${\rm GL}_d(\mathbb{R})$ denotes the 
set of $d \times d$ real invertible matrices, 
${\rm rank}[\cdot]$ is a metric 
related to the FTT rank, and $v_\T(\bm x)$ is a FTT 
approximation of $u_\T(\bm A\bm x)$.
In equation \eqref{rank_min_problem} we have purposely 
left the cost function unspecified, 
as some care must be taken in its 
definition to ensure that the optimization 
problem is both feasible and computationally 
effective in reducing rank. 
One possibility is to define 
${\rm rank}[v_\T(\bm x)]$ to return the 
sum of all $d$ entries of the 
multilinear rank vector $\bm r$ corresponding 
to the FTT tensor $v_\T(\bm x)$. 
While such a cost function is 
effective in measuring tensor rank it yields 
a NP-hard rank optimization problem \cite{non_con_nuclear}.

\subsection{Non-convex relaxation for the rank minimization problem}
\label{sec:non-convex-relaxation}
A common relaxation for rank minimization 
problems is to replace the rank cost 
function with the sum of the singular values.
To describe this relaxation in the context of FTT 
tensors we first recall that any FTT 
tensor $u_\T(\bm x)$ can be orthogonalized in the 
$i$-th variable as (e.g., \cite{adaptive_rank})
\begin{equation}
\label{orthog_at_i}
u_\T( \bm x) = \bm Q_{\leq i} \bm \Sigma_i \bm Q_{>i}, 
\end{equation}
where 
\begin{equation}
\label{multilinear_spec}
\bm \Sigma_i = {\rm diag}(\sigma_i(1), \sigma_i(2), \ldots, \sigma_i(r_i)),
\end{equation}
is a diagonal matrix with real entries (singular values of $u_\T$). 
The matrices $\bm Q_{\leq i}$ and $\bm Q_{>i}$ are 
defined as partial products
\begin{equation}
\bm Q_{\leq i} = \bm Q_1 \bm Q_2 \cdots \bm Q_i,\quad \quad
\bm Q_{>i} = \bm Q_{i+1} \bm Q_{i+2} \cdots \bm Q_d, 
\end{equation}
and they satisfy the orthogonality conditions 
\begin{equation}
\label{orthogonal_cores}
\left\langle \bm Q_{\leq i}^{\top} \bm Q_{\leq i} \right\rangle_{\leq i} 
= \bm I_{r_i \times r_i}, \qquad 
\left\langle \bm Q_{>i} \bm Q_{>i}^{\top} \right\rangle_{> i} 
= \bm I_{r_i \times r_i}. 
\end{equation}
Here, $\langle \cdot \rangle_{\leq i}$ and 
$\langle \cdot \rangle_{> i}$ are the averaging operators
\begin{equation}
\left\langle \bm W \right\rangle_{\leq i} (j,k) = 
\int_{\Omega_{\leq i}} w(j; \bm x; k) d \mu_{\leq i}(\bm x_{\leq i}), 
\qquad
\left\langle \bm W \right\rangle_{> i} (j,k) = 
\int_{\Omega_{>i}} w(j; \bm x; k) d \mu_{>i}(\bm x_{> i}),
\end{equation}
which map an arbitrary  $r_i \times r_i$ matrix-valued 
function $\bm W(\bm x)$  with entries 
$w(j; \bm x; k)$ into another $r_i \times r_i$ matrix-valued 
function depending on a smaller number of variables.  
Using the orthogonalization 
\eqref{orthog_at_i} for each $i = 1,2,\ldots,d-1$, 
we define the functions 
\begin{equation}
\label{multi_sings}
\begin{aligned}
S_i : L^2_{\mu}(\Omega) &\to \mathbb{R} \\
 u_\T(\bm x) &\mapsto \sum_{\alpha_i=1}^{r_i} \sigma_i(\alpha_i),
\end{aligned}
\end{equation}
which returns the sum of the singular 
values corresponding to the $i$-th 
component of the multilinear rank vector. 
Using these functions we define 
\begin{equation}
\label{sum_of_multi_sings}
\begin{aligned}
S : L^2_{\mu}(\Omega) &\to \mathbb{R} \\
 u_\T(\bm x) &\mapsto \sum_{i=1}^{d-1} \sum_{\alpha_i=1}^{r_i} \sigma_i(\alpha_i),
\end{aligned}
\end{equation}
which is a relaxation of the rank cost
function appearing in \eqref{rank_min_problem}. 
An analogous relaxation of the rank cost function 
called the matrix nuclear norm has been studied 
extensively for matrix rank minimization problems 
\cite{Fazel_2010,non_con_nuclear}. 
The function \eqref{sum_of_multi_sings} has also 
been used as a relaxation of ${\rm rank}[\cdot]$ 
in tensor completion \cite{tensor_comp_nuclear}.
Next, we proceed by selecting an appropriate 
search space for the rank cost function. 
The largest search space we may choose is 
${\rm GL}_d(\mathbb{R})$ as we have done in 
\eqref{rank_min_problem}. 
This, however, is not a good choice since transformations 
in ${\rm GL}_d(\mathbb{R})$ are not volume-preserving 
and hence do not preserve $L^2$ norm, i.e., 
$\| u_\T(\bm A \bm x) \|_{L^2_{\mu}} \neq \| u_\T(\bm x) \|_{L^2_{\mu}}$. 
Transformations which do not preserve the norm 
of $u_\T(\bm x)$ can reduce the rank cost function 
while having no impact on the tensor rank 
relative to the tensor's norm. 
Therefore we choose  ${\rm SL}_d(\mathbb{R})\subset {\rm GL}_d(\mathbb{R})$ 
as the search space, i.e., the collection of invertible matrices with determinant 
equal to one. These transformations are obviously volume-preserving, and therefore 
they preserve\footnote{It is straightforward to show with a simple 
change of variables that volume-preserving transformations 
preserve many quantities that are defined via an integral. 
For example, for any $u_\T(\bm x) \in L^2_{\mu}(\Omega)$ 
and $\bm A \in {\rm SL}_d(\mathbb{R})$ 
we have 
\begin{equation}
\| u_\T(\bm A \bm x) \|_{  L^p_{\mu}\left(\bm A^{-1}\Omega\right)} = \|u_\T(\bm x)\|_{L^p_{\mu}(\Omega)} , \qquad p=1,2.
\end{equation}} the $L^2$ norm of $u_\T$.

Since the domain of $S$ in \eqref{sum_of_multi_sings} is $L^2_{\mu}(\Omega)$ 
and the cost function must be defined on the search space ${\rm SL}_d(\mathbb{R})$, 
we define an evaluation map $E$ corresponding to $u_\T(\bm x)$ 
\begin{equation}
\label{eval_map}
\begin{aligned}
E : {\rm SL}_d(\mathbb{R}) &\to L^2_{\mu}(\Omega) \\
\bm A &\mapsto  u_\T(\bm A \bm x).
\end{aligned}
\end{equation}
Composing $E$ with $S$ yields the 
following (non-convex) relaxation of 
the cost function ${\rm rank}[\cdot]$ in \eqref{rank_min_problem} 
\begin{equation}
\label{relaxed_cost}
C = S \circ E : {\rm SL}_d(\mathbb{R}) \to \mathbb{R}.
\end{equation}
The function $C(\bm A)$ returns the sum of the singular values of the 
tensor ridge function $u_\T(\bm A\bm x)$, where $\bm A$ is 
an invertible matrix with determinant equal to one. 
The optimization problem corresponding to 
the cost function and search space discussed above 
is 
\begin{equation}
\label{relaxed_rank_min}
\bm A=\argmin_{\bm A  \in {\rm SL}_d(\mathbb{R})} C(\bm A) . 
\end{equation}
While \eqref{relaxed_rank_min} is simpler 
than \eqref{rank_min_problem}, it is still a computationally 
challenging problem for a number of reasons: First, it is 
non-convex. 
Second, when considered as 
a subset of all $d \times d$ matrices, the search space 
${\rm SL}_d(\mathbb{R})$ is subject to a non-trivial set 
of constraints, e.g., $\det(\bm A)=1$.
{  Third, we note that the set 
${\rm SL}_d(\mathbb{R})$ is unbounded 
(matrices with determinant $1$ can have 
entries that are arbitrarily large). This 
can yield convergence issues when looking for 
the minimizer \eqref{relaxed_rank_min}. 
In order to overcome this problem one may introduce 
linear inequality constraints on the entries 
of the matrix $\bm A$, i.e., optimize over bounded 
subsets of ${\rm SL}_d(\mathbb{R})$ such as the set of 
rotation matrices with determinant $1$.} 

Hereafter we develop a new method 
for obtaining a local minimum 
of \eqref{relaxed_rank_min}. 
To handle the search space constraints, we give 
${\rm SL}_d(\mathbb{R})$ a Riemannian 
manifold structure and perform gradient 
descent on this manifold {  \cite{optimization_on_matrices}}. 
To obtain the gradient of $C(\bm A)$ efficiently 
we build its computation into the FTT truncation 
procedure at a negligible additional computational cost. 

\subsection{Riemannian gradient descent path}
\label{sec:gradient_descent}

In Lemma \ref{lemma:continuity_of_cost} we 
show that if the FTT singular values of  
$u_\T(\bm A\bm x)$ are simple (i.e., distinct) 
then the cost function $C(\bm A)$ defined in \eqref{relaxed_cost} 
is differentiable in $\bm A$.
With this sufficient condition for the smoothness of the 
cost function established, we can use the machinery of Riemannian 
geometry summarized in \ref{app:Riemann} to construct a 
gradient descent path for the minimization of \eqref{relaxed_rank_min} 
on the search space ${\rm SL}_d(\mathbb{R})$. 
To this end, let us denote by $\bm \Gamma(\epsilon)$ such gradient descent path. 
To build $\bm \Gamma(\epsilon)$, we start at the identity 
$\bm \Gamma(0)=\bm I$ and consider the matrix ordinary differential equation
\begin{equation}
\label{descent_path_diffEQ}
\begin{cases}
\displaystyle \frac{d \bm \Gamma(\epsilon)}{d\epsilon} = 
-{\rm grad } \left[C \left(\bm \Gamma\left(\epsilon\right)\right)\right], \\
\bm \Gamma(0) = \bm I,
\end{cases}
\end{equation}
where ${\rm grad } \left[C \left(\bm \Gamma\left(\epsilon\right)\right)\right]$ is 
the {\em Riemannian gradient} of the cost function 
$C$ defined in \eqref{relaxed_cost}. 
By construction, the vector 
$-{\rm grad}\left[ C(\bm \Gamma(\epsilon)) \right]$ is  tangent to the manifold 
${\rm SL}_d(\mathbb{R})$ at each 
point $\bm \Gamma\left(\epsilon\right)$, 
and thus $\bm \Gamma(\epsilon) \in {\rm SL}_d(\mathbb{R})$ 
for all $\epsilon\geq 0$. 
Since $-{\rm grad }\left[C \left(\bm \Gamma\left(\epsilon\right)\right)\right]$ 
points in the direction of steepest descent of the cost function 
$C$ at the point $\bm \Gamma(\epsilon)$, 
the cost function is guaranteed to decrease 
along the path $\bm \Gamma(\epsilon)$ (or remain constant 
which implies we have obtained a local minimum). 
In Figure \ref{fig:gradient_descent} 
we provide an illustration of the 
Riemannian gradient descent path $\bm \Gamma(\epsilon)$. 
For computational efficiency, it is 
essential to have a fast method 
for computing the Riemannian gradient of the 
cost function $C$ at an arbitrary point 
$\bm A \in {\rm SL}_d(\mathbb{R})$. 
The following Proposition provides an 
expression for such Riemannian gradient in terms of 
orthogonal FTT cores which we will 
use to efficiently compute the descent 
direction. The proof is given in \ref{sec:appdx_proof}.

\begin{proposition}
\label{prop:riemannian_gradient}
The Riemannian gradient of the cost function \eqref{relaxed_cost} 
at the point $\bm A \in {\rm SL}_d(\mathbb{R})$ is given by 
\begin{equation}
\label{gradient_of_cost}
{\rm grad }\left[C \left(\bm A\right)\right] = \bm D \bm A,
\end{equation}
where 
\begin{equation}
\label{descent_generator}
{\bm D} = \sum_{i=1}^{d-1} \int_{\Omega} \bm Q_{\leq i} \bm Q_{>i} \left( \nabla v(\bm x)  \left( \bm A \bm x\right)^{\top} - \frac{\nabla v(\bm x)^{\top} \bm A \bm x}{d} \bm I_{d \times d} \right) d \mu(\bm x) ,
\end{equation}
$v(\bm x) = u_\T(\bm A \bm x)$ and $\bm Q_{\leq i}, \bm Q_{>i}$ are tensor 
cores of the orthogonalized FTT 
\begin{equation}
v(\bm x) = \bm Q_{\leq i} \bm \Sigma_i \bm Q_{>i}.
\end{equation} 
\end{proposition}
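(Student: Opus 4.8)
The plan is to compute the Euclidean (ambient) gradient of the cost $C$ at $\bm A$ first, and then project it onto the tangent space $T_{\bm A}\mathrm{SL}_d(\mathbb{R})$ using the Riemannian metric described in the appendix on the manifold of coordinate flows. The Riemannian gradient is then obtained by the standard identity $\mathrm{grad}[C(\bm A)] = \Pi_{T_{\bm A}\mathrm{SL}_d}(\nabla_{\mathrm{eucl}} C(\bm A))$, where $\Pi$ is the orthogonal projection with respect to the chosen metric.

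First I would differentiate $C(\bm A) = S(E(\bm A)) = \sum_{i=1}^{d-1} S_i(u_\T(\bm A\bm x))$ along an arbitrary curve $\bm A(t)$ with $\bm A(0)=\bm A$, $\dot{\bm A}(0) = \bm \Xi$. By the chain rule, the variation of $v(\bm x;t) = u_\T(\bm A(t)\bm x)$ is $\partial_t v|_{t=0} = \nabla v(\bm x)\cdot(\bm\Xi\bm x)$. The key analytic input is the derivative of the singular-value sum $S_i$ under such a perturbation. Using the orthogonalized form $v = \bm Q_{\le i}\bm\Sigma_i\bm Q_{>i}$ and the simplicity assumption from Lemma~\ref{lemma:continuity_of_cost} (so the $\sigma_i(\alpha_i)$ are differentiable), the first-order perturbation of the sum of singular values is $\sum_{\alpha_i}\dot\sigma_i(\alpha_i) = \left\langle \bm Q_{\le i}\bm Q_{>i},\,\delta v\right\rangle_{L^2_\mu}$, i.e.\ the sum of singular values has Euclidean gradient $\bm Q_{\le i}\bm Q_{>i}$ in the ambient function space $L^2_\mu(\Omega)$ — this is the tensor analogue of the matrix fact that the subgradient of the nuclear norm at a full-rank matrix is $\bm U\bm V^\top$. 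Substituting $\delta v = \nabla v\cdot(\bm\Xi\bm x)$ and summing over $i$ gives
\begin{equation}
\left.\frac{d}{dt}C(\bm A(t))\right|_{t=0} = \sum_{i=1}^{d-1}\int_\Omega \bm Q_{\le i}\bm Q_{>i}\,\big(\nabla v(\bm x)\cdot(\bm\Xi\bm x)\big)\,d\mu(\bm x) = \left\langle \widetilde{\bm D},\,\bm\Xi\right\rangle,
\end{equation}
where $\widetilde{\bm D} = \sum_{i=1}^{d-1}\int_\Omega \bm Q_{\le i}\bm Q_{>i}\,\nabla v(\bm x)\,\bm x^\top\,d\mu(\bm x)$, after pairing via the Frobenius inner product and using $\nabla v\cdot(\bm\Xi\bm x) = \mathrm{tr}(\bm\Xi^\top\nabla v\,\bm x^\top)$. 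This identifies the Euclidean gradient as $\widetilde{\bm D}$; note $\widetilde{\bm D} = \bm D_0\bm A$ is not yet of the stated form because $\bm x$ rather than $\bm A\bm x$ appears — a change of variables $\bm y = \bm A\bm x$ (volume-preserving since $\bm A\in\mathrm{SL}_d$) converts $\bm x^\top$ to $(\bm A^{-1}\bm y)^\top$, and right-multiplication structure will bring out the factor $\bm A$ in \eqref{gradient_of_cost}.

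Next I would carry out the Riemannian projection. The tangent space at $\bm A$ is $\{\bm X : \mathrm{tr}(\bm A^{-1}\bm X)=0\}$ (derivative of $\det\bm A = 1$), and with the right-invariant metric used in the appendix one writes $\bm X = \bm\Theta\bm A$ with $\mathrm{tr}(\bm\Theta)=0$, i.e.\ the tangent space is identified with the traceless matrices $\mathfrak{sl}_d(\mathbb{R})$ transported by right translation. Projecting the ambient gradient onto this space amounts to right-translating to the Lie algebra, subtracting the trace component $\tfrac{1}{d}\mathrm{tr}(\cdot)\bm I$, and translating back; this is precisely where the correction term $-\tfrac{1}{d}(\nabla v^\top\bm A\bm x)\bm I_{d\times d}$ in \eqref{descent_generator} comes from, since $\mathrm{tr}$ of the integrand's first term is $\int_\Omega \bm Q_{\le i}\bm Q_{>i}\,(\nabla v\cdot \bm A\bm x)\,d\mu$. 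Assembling the projected gradient yields $\bm D\bm A$ with $\bm D$ as in \eqref{descent_generator}.

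The main obstacle will be rigorously justifying the first-order expansion of the singular-value sum $S_i$ — i.e.\ establishing that $\mathrm{grad}_{L^2_\mu}S_i(v) = \bm Q_{\le i}\bm Q_{>i}$ at a point where the singular values are simple. This requires care with the functional-analytic setting: the $\bm Q$'s are obtained from SVDs of the unfolding operators (compact, self-adjoint), and one needs analytic perturbation theory for their eigenvalues along the curve $v(\bm x;t)$, together with a verification that the orthogonality constraints \eqref{orthogonal_cores} make the cross terms in $\dot\sigma_i$ vanish (the usual argument that variations of the singular vectors do not contribute to the variation of the singular values at first order). Identifying the precise metric conventions from \ref{app:Riemann} so that the projection produces exactly the $1/d$ factor, and confirming the change of variables handles the domain $\bm A^{-1}\Omega$ consistently, are the remaining bookkeeping points; these are routine once the metric is fixed.
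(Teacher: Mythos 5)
Your plan follows essentially the same route as the paper: compute the differential of $C=S\circ E$ by the chain rule, identify the gradient with respect to the right-invariant metric \eqref{riemannian_metric} (which is exactly how the factor $\bm A^{\top}$ turns $\bm x^{\top}$ into $(\bm A\bm x)^{\top}$), and then remove the trace component $\tfrac{1}{d}{\rm tr}(\cdot)\,\bm I$ to land in $T_{\bm A}{\rm SL}_d(\mathbb{R})$ — the paper does this last step by direct verification that the correction term pairs to zero against $\bm V=\bm W\bm A$ with ${\rm tr}(\bm W)=0$, which is equivalent to your projection. The step you flag as the main obstacle, $d_vS\,w=\sum_i\int\bm Q_{\leq i}\bm Q_{>i}\,w\,d\mu$, is precisely the paper's Lemma \ref{lemma:diff_of_sigma}, proved by the mechanism you sketch: differentiating the orthogonality constraints \eqref{orthogonal_cores} makes the singular-vector variation terms skew-symmetric, hence traceless.
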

\begin{figure}[!t]
\centerline{\footnotesize\hspace{0.1cm}  (a) \hspace{6.7cm} (b)  }
\centerline{\hspace{1cm}
\includegraphics[scale=0.4]{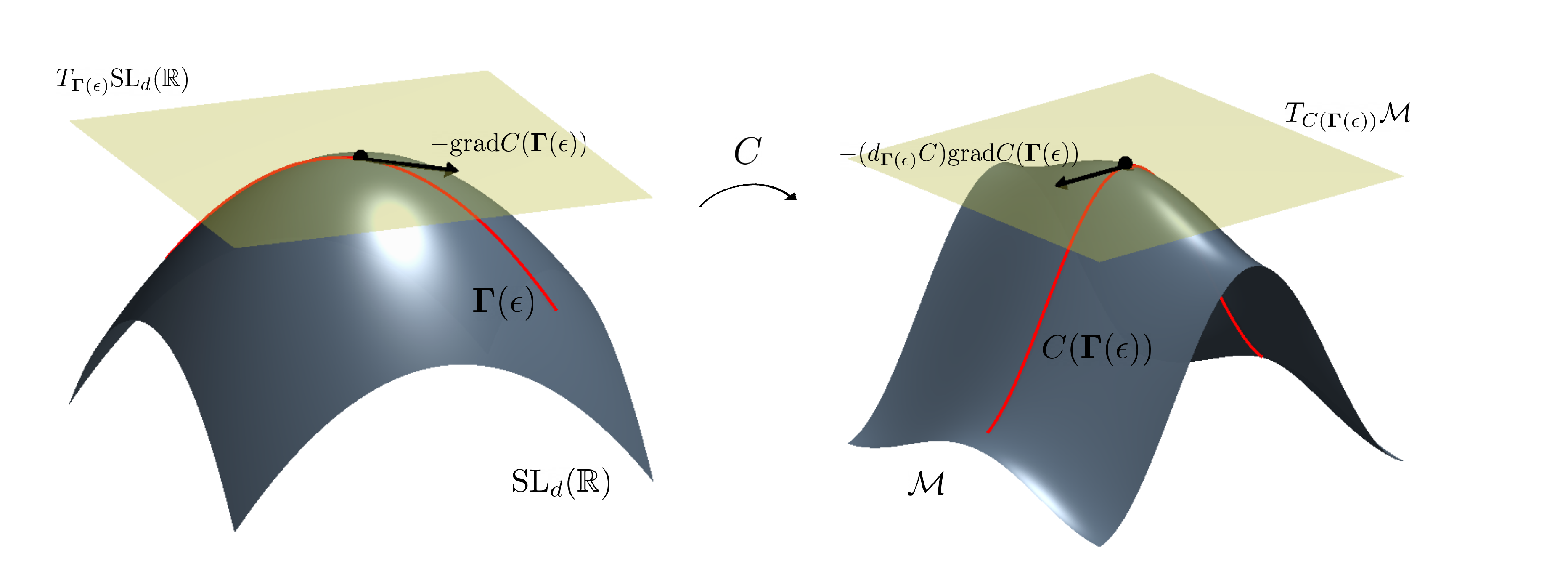}
}
\caption{An illustration of 
a descent path $\bm \Gamma(\epsilon)$ 
for the cost function $C$ defined in 
\eqref{relaxed_cost}. 
At each point of the 
path 
$\bm \Gamma(\epsilon) \in {\rm SL}_d(\mathbb{R})$ 
we assign the tangent 
vector $-{\rm grad} \left[C( \bm \Gamma(\epsilon))\right] \in 
T_{\bm \Gamma(\epsilon)} {\rm SL}_d(\mathbb{R})$ 
which points in the direction 
of steepest descent 
$-(d_{\bm \Gamma(\epsilon)} C ) 
{\rm grad} \left[C(\bm \Gamma(\epsilon))\right] \in 
T_{C(\bm \Gamma(\epsilon))} \mathcal{M}$ 
of the function 
$C$ at the point $C(\bm \Gamma(\epsilon)) \in \mathcal{M}$.}
\label{fig:gradient_descent}
\end{figure}

\noindent
Using the gradient descent path defined 
by \eqref{descent_path_diffEQ} we differentiate 
the coordinate transformation 
\begin{equation}
\label{descent_coordinates}
\bm y(\epsilon) = \bm \Gamma(\epsilon) \bm x
\end{equation}
with respect to $\epsilon$ and 
use \eqref{descent_path_diffEQ}-\eqref{gradient_of_cost} to 
obtain 
\begin{equation}
\begin{cases}
\label{coordinate_descent_ODE}
\displaystyle\frac{d \bm y(\epsilon)}{d \epsilon} = -\bm D(\epsilon) \bm y(\epsilon), \\
\bm y(0) = \bm x.
\end{cases}
\end{equation}
Note that \eqref{coordinate_descent_ODE} has the same 
form as the ODE \eqref{coordinate_dynamical_system} 
we used to define coordinate flows. 
Hence, by construction, the flow map generated by 
\eqref{coordinate_descent_ODE} 
is the descent path $\bm \Gamma(\epsilon)$ 
on the manifold ${\rm SL}_d(\mathbb{R})$, 
which, converges to a local 
minimum of $C$ as 
$\epsilon$ increases. 
By defining the function 
$v(\bm x; \epsilon) = 
u_\T( \bm \Gamma(\epsilon) \bm x)$ 
and differentiating it with respect to 
$\epsilon$ we obtain the hyperbolic PDE 
\begin{equation}
\label{PDE_for_tensor_descent}
\begin{cases}
\displaystyle\frac{\partial v(\bm x ; \epsilon)}{\partial \epsilon} = -
\nabla v(\bm x;\epsilon) \cdot \left[\bm D(\epsilon) 
\bm \Gamma(\epsilon) \bm x \right], \\
v(\bm x ;0) = u_\T(\bm x).
\end{cases}
\end{equation}
Note that the evolution of $\bm y(\epsilon)=\bm \Gamma(\epsilon)\bm x$ 
at the right hand side of  \eqref{PDE_for_tensor_descent} is 
defined by \eqref{coordinate_descent_ODE}.
Integrating \eqref{coordinate_descent_ODE}-\eqref{PDE_for_tensor_descent} forward in $\epsilon$
yields a rank-reducing linear coordinate transformation $\bm \Gamma(\epsilon)$ 
and the reduced rank function $v(\bm x; \epsilon) = u_\T\left(\bm \Gamma(\epsilon) \bm x\right)$.

\subsection{Numerical integration of the gradient descent equations}

It is convenient to use a step-truncation method 
\cite{rodgers2020step-truncation,rodgers2022implicit,Vandereycken_2019} 
to integrate the initial value problem  
\eqref{PDE_for_tensor_descent} on a FTT tensor manifold. 
This is because applying the FTT truncation operation to 
$v(\bm x ;\epsilon)$ requires computing 
the orthogonalized tensor cores 
$\bm Q_{\leq i}(\epsilon), \bm Q_{>i}(\epsilon)$ $(i = 1,2,\ldots,d-1)$ 
which can then be readily used to evaluate the matrix $\bm D(\epsilon)$ 
defining the Riemannian gradient \eqref{descent_generator}. 
Moreover, the FTT truncation operation applied to $v(\bm x ;\epsilon)$ 
yields an expansion of the form \eqref{FTT_new}, which is the desired 
tensor format.  
To describe the integration algorithm in more detail, 
let us discretize the interval $[0, \epsilon_f]$ into $N+1$ 
evenly-spaced points\footnote{The right end-point 
$\epsilon_f$ will ultimately be determined 
by the stopping criterion for gradient descent.}
\begin{equation}
\label{discrete_epsilon}
\epsilon_i = i \Delta \epsilon, \qquad 
\Delta \epsilon = \frac{\epsilon_f}{N}, \qquad i=0,1,\ldots,N, 
\end{equation}
and let $v_i, \bm \Gamma_i, \bm D_i$ denote 
$v(\bm x;\epsilon_i), \bm \Gamma(\epsilon_i), \bm D( \epsilon_i)$, respectively.
Let 
\begin{align}
\label{one_step_scheme}
    v_{i+1} &= v_i + \Delta \epsilon \Phi\left(v_i,\bm D_i, \Delta \epsilon \right),\\
    \label{discrete_matrix_ODE}
\bm \Gamma_{i+1} &= \bm \Gamma_i + \Delta \epsilon \widehat{\bm \Phi} 
\left( \bm \Gamma_i , \bm D_i, \Delta \epsilon \right)
\end{align} 
be one-step explicit integration schemes approximating the 
solution to the initial value problem \eqref{PDE_for_tensor_descent} 
and the matrix ODE \eqref{descent_path_diffEQ}, respectively. 
In order to guarantee that the solution 
$v_{i+1}$ is a low-rank FTT tensor, 
we apply a truncation 
operator to the right hand 
side of \eqref{one_step_scheme}.
This yields the step-truncation method \cite{rodgers2020step-truncation}
\begin{equation}
\label{step_truncation_scheme}
v_{i+1} = \mathfrak{T}_{\delta}
\left( v_i + 
\Delta \epsilon \Phi\left(v_i,\bm D_i, \Delta \epsilon \right) \right). 
\end{equation}
Here, $\mathfrak{T}_{\delta}$ denotes 
the standard FTT truncation operator 
with relative accuracy $\delta$ proposed 
in \cite{OseledetsTT} 
modified to return the matrix $\bm D_{i+1}$. 
A detailed description of the modified 
tensor truncation algorithm is provided 
in  section \ref{sec:modified_truncation}. 

At this point, a few remarks regarding the integration scheme 
\eqref{one_step_scheme}-\eqref{step_truncation_scheme} are in order. 
First, we notice that at each step of the 
gradient descent algorithm we are computing 
the gradient \eqref{gradient_of_cost} at the 
identity matrix since the current tensor $v_i$ 
is the tensor ridge function 
$v_i(\bm x) = u_\T(\bm \Gamma_i \bm x)$.
Second, the accuracy of the 
tensor $v_i$ approximating 
$u_\T(\bm \Gamma_i \bm x)$ is determined by the 
chosen integration scheme and its relevant 
parameters 
(i.e., the function 
$\Phi$, the step size $\Delta \epsilon$, and 
the accuracy of $\mathfrak{T}_{\delta}$). 
In a standard gradient descent algorithm, 
convergence can be expedited with a 
line-search routine that determines 
an appropriate step-size to take 
in the descent direction. 
However, in the proposed integration scheme 
the step-size determines the accuracy of the 
final tensor, thus we keep the step-size 
$\Delta \epsilon$ fixed during gradient descent. 
We set a stopping criterion for the integration of 
\eqref{one_step_scheme}-\eqref{step_truncation_scheme} 
based on the empirical observation that the cost 
function $C$ does not decrease substantially along the descent 
path $\bm \Gamma(\epsilon)$ when $\bm \Gamma_{i}$ is close 
to a local minimum. Mathematically, this translates 
into the condition
\begin{equation}
\label{stopping_criterion_1}
\frac{d S(v_i)}{d \epsilon}  >
- {  \eta},
\end{equation}
where ${  \eta}$ is some predetermined 
tolerance. 
Since the solution history $v_i,v_{i-1},\ldots$ 
is available during gradient descent, a simple 
method for approximating $dS(v_i)/d\epsilon $ 
is a $p$-point backwards difference stencil 
\begin{equation}
\frac{d S(v_i)}{d \epsilon} \approx {\rm BD}^{(p)}(S(v_{i}),{  S(v_{i-1})},\ldots,S(v_{i-p})).
\label{BD}
\end{equation}
In addition to the stopping criterion 
\eqref{stopping_criterion_1}, we also 
set a maximum number of iterations $M_{\rm iter}$ 
to ensure that the Riemannian 
gradient descent algorithm halts 
within a reasonable amount of time.
We summarize the proposed Riemannian gradient descent method to compute 
a local minimum of \eqref{relaxed_rank_min} 
in Algorithm \ref{alg:gradient_descent}.

\vs
\begin{algorithm}[!t]
\SetAlgoLined
 \caption{Riemannian gradient descent for computing rank-reducing linear coordinate maps.}
\label{alg:gradient_descent}
\vspace{0.3cm}
 \KwIn{\\
  $u_\T$ $\rightarrow$ initial FTT tensor, \\
  $\Delta \epsilon$ $\rightarrow$ step-size for gradient descent, \\
  ${  \eta}$ $\rightarrow$ stopping tolerance, \\
  $M_{\rm iter}$ $\rightarrow$ maximum number of iterations.
 }
 \vspace{0.3cm}
 \KwOut{\\
   $\bm \Gamma$ $\rightarrow$ rank-reducing linear coordinate transformation, \\
   $v_\T$ $\rightarrow$ reduced rank FTT tensor on transformed coordinates 
   $v_\T(\bm x) = u_\T(\bm \Gamma \bm x)$ .}
   \vspace{0.3cm}
 { \bf Runtime:}\\
\begin{itemize}
\item[]   $[v_0, S(v_0),\bm D_0] = \mathfrak{T}_{\delta} (u_\T)$,\\
   $\bm \Gamma_0 = \bm I$, \\
   $\dot{S}(v_0) = -\infty$, \\ 
   $i = 0$. \\
   \vspace{0.3cm}
{{\bf while} $\dot{S}(v_i) < -{  \eta}$ {\bf and} $i \leq M_{\rm iter}$}
\begin{itemize}
\item[]$v_{i+1} = v_i + \Delta \epsilon \Phi(v_i, \bm D_i, \Delta \epsilon)$, 
\item []$[v_{i+1},S(v_{i+1}),\bm D_{i+1}] = \mathfrak{T}_{\delta}(v_{i+1})$, 
\item []$\bm \Gamma_{i+1} = \bm \Gamma_i + \Delta \epsilon \widehat{\bm \Phi}(\bm \Gamma_i, \bm D_i, \Delta \epsilon)$,
\item []$\dot{S}(v_{i+1}) = {\rm BD}^{(p)}(S(v_{i+1}),S(v_i),\ldots,S(v_{i+1-p}))$, 
\item[] $i = i+1$,
\item[]$\bm \Gamma = \bm \Gamma_i$, 
\item []$v_\T = v_i$.
\end{itemize}
{\bf end}
\end{itemize}
\end{algorithm}
\vs

\subsection{Modified tensor truncation algorithm}
\label{sec:modified_truncation}
\begin{figure}[!t]
\centerline{\hspace{1cm}
\includegraphics[scale=0.42]{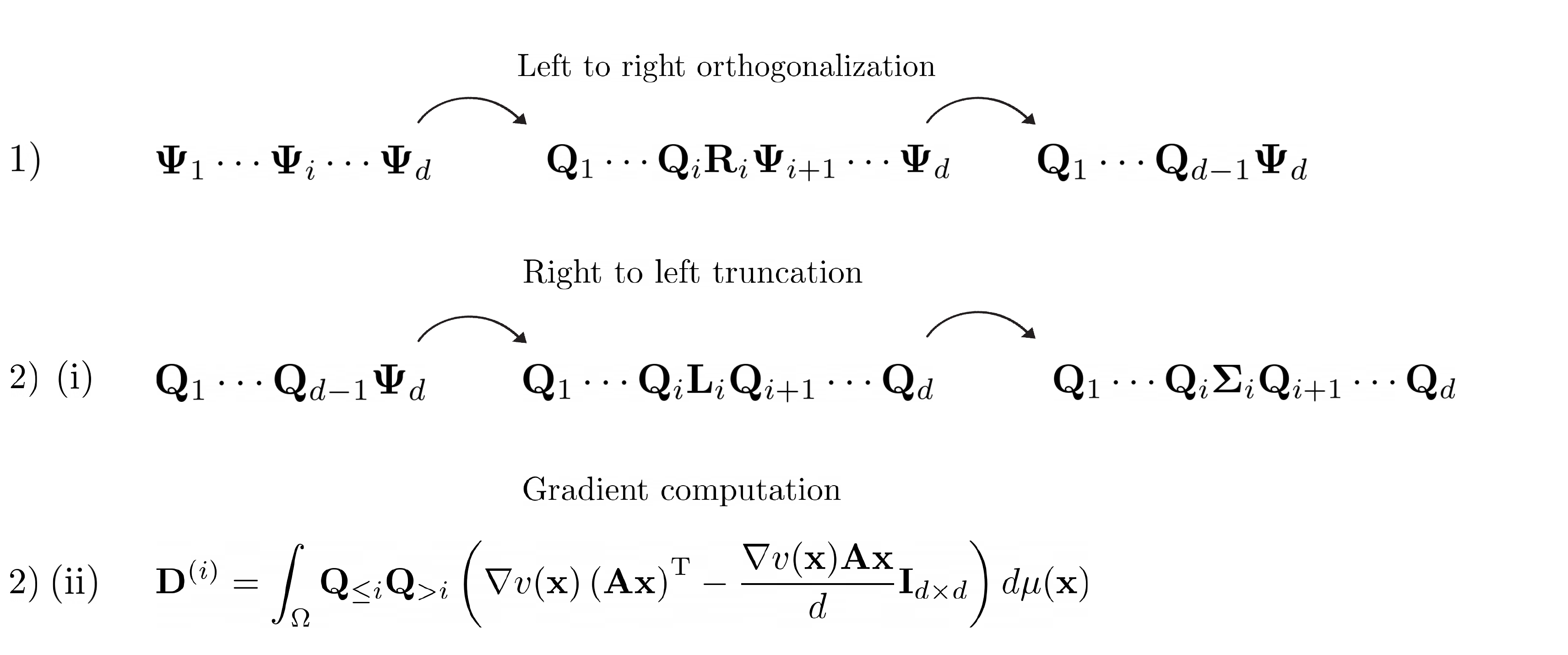}
}
\caption{A summary of the modified FTT truncation algorithm  for computing the Riemannian gradient \eqref{descent_generator}.}
\label{fig:truncation_summary}
\end{figure}

\begin{algorithm}[!t]
\DontPrintSemicolon
\SetAlgoLined
 \caption{Modified FTT truncation algorithm.}
\label{alg:truncation_and_gradient}
\vspace{.3cm}
 \KwIn{\\
 $v$ $\rightarrow$ FTT tensor with cores 
 $\bm \Psi_1,\bm \Psi_2 ,\ldots, \bm \Psi_d$,\\
  $\delta$ $\rightarrow$ desired accuracy.
 }
\vspace{.3cm}
 \KwOut{ \\
$v_\T$ $\rightarrow$ truncated FTT tensor satisfying $\| v - v_\T \|_{L^2_{\mu}(\Omega)} \leq \delta \|v\|_{L^2_{\mu}(\Omega)}$, \\ 
$S\left(v_\T\right)$ $\rightarrow$ sum of all multilinear singular values of $v_\T$, \\
$\bm D$ $\rightarrow$ left factor of the Riemannian gradient \eqref{gradient_of_cost}.
}
\vspace{.3cm}
{\bf Runtime:} 
\begin{itemize}
\item []$\hat{\delta} = \displaystyle\frac{\delta}{\sqrt{d-1}} \|v\|_{L^2}$\hspace{1cm} (Set truncation parameter) 

\item [] $S(v) = 0$ \hspace{2.5cm}(Initialize $S(v)$)
\item [] {\bf for  $i=1$ to $d-1$}  \hspace{1cm} (Left-to-right orthogonalization)
\begin{itemize}
\item [] $[\bm Q_i , \bm R_i] = {\rm QR}(\bm \Psi_i)$
\item [] $\bm \Psi_{i+1} = \bm R_i \bm \Psi_{i+1}$ \\
\end{itemize}
{\bf end}
  \vspace{0.1cm}
  \item [] {\bf for  $i=d$ to $2$} \hspace{1cm} (Right-to-left truncation and gradient computation)
 \begin{itemize}
\item [] $[\bm L_i , \bm Q_i] = {\rm LQ}(\bm \Psi_i)$ 
\item [] $[\bm U_i, \bm \Sigma_i, \bm V_i] = {\rm SVD}_{\hat{\delta}}(\bm L_i)$
\item [] $\bm Q_{i} = \bm V_i^{\top} \bm Q_{i} $ 
\item [] $\bm Q_{i-1} = \bm Q_{i-1} \bm U_i$, 
\item [] $S(v) = S(v) + {\rm sum}\left(\Sigma_i \right)$, 
\item [] $\bm D^{(i-1)} = \int \bm Q_{\leq i-1}  \bm Q_{>i-1}\left( \nabla v(\bm x) \bm x^{\top} \right) d \mu(\bm x)$
\end{itemize}
{\bf end}
\item [] $\bm D = \displaystyle\sum_{i=1}^{d-1} \bm D^{(i)}$. \\
\item [] $v_\T = \bm Q_1 \bm \Sigma_2 \bm Q_2 \ldots \bm Q_d$.
 \end{itemize}
\end{algorithm}
To speed up numerical integration of the gradient descent equations
\eqref{discrete_matrix_ODE}-\eqref{step_truncation_scheme} it is 
convenient to compute the matrix $\bm D_i$ defined in \eqref{descent_generator} during 
FTT truncation. The reason being that standard FTT truncation requires 
the computation of all orthogonal FTT cores $\bm Q_{\leq j}$ and $\bm Q_{>j}$, which, 
can be readily used to compute $\bm D_i$.
To describe the modified tensor truncation algorithm, 
let $v_{\T}(\bm x) = \bm \Psi_1 \cdots \bm \Psi_d$ be an 
FTT tensor with non-optimized rank, e.g., $v_{\T}(\bm x)$ is 
the result of adding two FTT tensors together. 
Denote by $\mathfrak{T}_{\delta}$ the modified truncation operator, 
where $\delta$ is the required relative accuracy, i.e., 
\begin{equation}
\|v_{\T}(\bm x) - \mathfrak{T}_{\delta} (v_{\T}(\bm x)) \|_{L^2_{\mu}(\Omega)} \leq 
\delta \| v_{\T}(\bm x) \|_{L^2_{\mu}(\Omega)}. 
\end{equation} 
We also define 
\begin{equation}
\hat{\delta} = \frac{\delta}{\sqrt{d-1}} 
\| v_{\T}(\bm x) \|_{L^2_{\mu}(\Omega)},
\end{equation}
which is the required accuracy for each 
SVD in the FTT truncation algorithm. 
As described in \cite{adaptive_rank}, we may perform a 
functional analogue of the QR decomposition on the 
FTT tensor cores of $v_{\T}(\bm x)$
\begin{equation}
\label{functional_QR}
\bm \Psi_i = \bm Q_i \bm R_i,
\end{equation}
where $\bm Q_i$ is a $r_{i-1} \times r_i$ matrix with elements 
in $L^2_{\mu_i}(\Omega_i)$ satisfying 
$\left\langle \bm Q_i^{\top}\bm Q_i \right\rangle_i = 
\bm I_{r_{i} \times r_i}$, and $\bm R_i$ 
is an upper triangular $r_i \times r_i$ matrix with real entries.
Similarly, we can perform an LQ-factorization 
\begin{equation}
\bm \Psi_i = \bm L_i \bm Q_i,
\end{equation}
where $\bm Q_i$ is a $r_{i-1} \times r_i$ matrix with elements 
in $L^2_{\mu_i}(\Omega_i)$ satisfying 
$\left\langle \bm Q_i^{\top}\bm Q_i \right\rangle_i = 
\bm I_{r_{i} \times r_i}$, and $\bm L_i$ 
is a lower-triangular $r_i \times r_i$ matrix with real entries.
The first procedure in the modified 
truncation routine is a 
left-to-right orthogonalization sweep in which 
we first perform the QR decomposition 
\begin{equation}
\label{functional_QR_d}
\bm \Psi_1 = \bm Q_1 \bm R_1, 
\end{equation}
and then update the core $\bm \Psi_2$ 
\begin{equation}
\bm \Psi_2 = \bm R_1 \bm \Psi_2.
\end{equation}
This process is repeated recursively 
\begin{equation}
\begin{aligned}
\bm \Psi_i = \bm Q_i \bm R_i, \qquad 
\bm \Psi_{i+1} = \bm R_i \bm \Psi_{i+1}, \qquad i = 2,\ldots, d-1,
\end{aligned}
\end{equation}
resulting in the orthogonalization 
\begin{equation}
\label{left_orthog}
v_{\T}(\bm x) = \bm Q_1 \bm Q_2 \cdots \bm Q_{d-1} \bm \Psi_d.
\end{equation}
Next, we perform a right-to-left sweep which 
compresses $v_\T(\bm x)$ 
and simultaneously computes 
each term appearing in the summation of $\bm D$ in equation 
\eqref{descent_generator}. 
The first step of this procedure is to compute a 
LQ decomposition of $\bm \Psi_d$ 
\begin{equation}
\bm \Psi_d = \bm L_d \bm Q_d,
\label{LQ}
\end{equation}
and then perform a truncated singular value decomposition of $\bm L_d$ 
with threshold $\hat{\delta}$ 
\begin{equation}
\bm L_d = \bm U_d \bm \Sigma_d \bm V_d^{\top}. 
\label{Lqq}
\end{equation}
Substituting \eqref{LQ} and \eqref{Lqq}  into \eqref{left_orthog} yields
\begin{equation}
v_{\T}(\bm x) = \bm Q_1 \bm Q_2 \cdots \bm Q_{d-1} \bm \Sigma_d \bm Q_d,
\end{equation}
where we re-defined 
\begin{equation}
\label{truncated_d}
\bm Q_{d-1} = \bm Q_{d-1} \bm U_d, \qquad \bm Q_d = \bm V_d^{\top} \bm Q_d.
\end{equation}
At this point we have truncated the FTT tensor $v_\T(\bm x)$  in the $d$-{th} variable. 
The expansion \eqref{truncated_d} provides the FTT orthogonalization needed 
to compute the $(d-1)$-th term in the sum \eqref{descent_generator} 
\begin{equation}
\label{D_d-1}
\bm D^{(d-1)} =  \int_{\Omega} \bm Q_{\leq d-1} \bm Q_{>d} \left( \nabla v_{\T}(\bm x) 
\bm x ^{\top} \right)   d \mu(\bm x),
\end{equation}
which, can be computed efficiently by applying one-dimensional differentiation matrices 
and quadrature rules to the FTT cores.
We proceed in a recursive manner ($i = d-1,\ldots, 2$) 
with the same steps described above for $\bm \Psi_d$. 
First compute the LQ decomposition 
\begin{equation}
\left( \bm Q_i \bm \Sigma_{i+1} \right) = \bm L_i \bm Q_i, 
\end{equation}
and then perform a singular value decomposition 
with threshold $\hat{\delta}$ 
\begin{equation}
\bm L_i = \bm U_i \bm \Sigma_i \bm V_i^{\top}. 
\end{equation}
Then rewrite $v_\T(\bm x)$ as 
\begin{equation}
\label{orthog_at_i_2}
v_{\T}(\bm x) = \bm Q_1 \bm Q_2 \cdots \bm Q_{i-1} \bm \Sigma_i \bm Q_i \cdots \bm Q_d,
\end{equation}
where we re-defined 
\begin{equation}
\label{truncated_i}
\bm Q_{i-1} = \bm Q_{i-1} \bm U_i, \qquad \bm Q_i = \bm V_i^{\top} \bm Q_i.
\end{equation}
The expansion \eqref{orthog_at_i_2} provides 
the FTT orthogonalization required 
to compute the $(i-1)$-th term in the sum \eqref{descent_generator} 
\begin{equation}
\bm D^{(i-1)} =  \int_{\Omega} \bm Q_{\leq i-1} \bm Q_{>i} \left( \nabla v_{\T}(\bm x) \bm x ^{\top} \right)   d \mu(\bm x),
\end{equation}
which, can be computed efficiently 
by applying one-dimensional 
differentiation matrices 
and quadrature rules to the FTT cores.
Finally with all of the terms in \eqref{descent_generator} computed we simply 
sum them to obtain the matrix 
\begin{equation}
\bm D = \displaystyle \sum_{i=1}^{d-1} \bm D^{(i)}.
\end{equation}
We summarize the main steps of the modified tensor truncation 
in Figure \ref{fig:truncation_summary} and in Algorithm \ref{alg:truncation_and_gradient}. Clearly, the matrix $\bm D$ needs 
to be recomputed at each step $\epsilon_i$, resulting in 
the matrix $\bm D_i$ appearing in the gradient descent equations
\eqref{discrete_matrix_ODE}-\eqref{step_truncation_scheme}.

\subsection{Computational cost}

One step of a first-order step-truncation integrator 
of the form \eqref{step_truncation_scheme} 
(e.g. Euler forward) without computing 
the left factor of the Riemannian gradient 
\eqref{descent_generator} requires one multiplication between 
a scalar and a TT tensor ($\mathcal{O}(n r^2)$ FLOPS), 
one addition between two TT tensors, and 
one FTT truncation ($\mathcal{O}(dnr^3)$ FLOPS) for 
a total  computational complexity $\mathcal{O}(dnr^3)$.
In the above estimate we assumed that 
each entry of the rank of 
the FTT tensor $v_i + \Delta \epsilon \Phi(v_i , \bm  D_i,  \Delta \epsilon)$ 
is bounded by $r$ and $v_i$ is discretized 
on a grid with $n$ points in each variable.
In addition to the cost of 
step-truncation, we must 
also compute the matrix 
$\bm D_i$ defined in 
\eqref{descent_generator} 
at each step. 
The orthogonal FTT cores $\bm Q_{\leq j}, \bm Q_{>j}$ 
needed for the computation of $\bm D_i$ are 
readily available during the tensor truncation 
procedure. 
For the computation of $\bm D_i$ we must 
compute the gradient of $\nabla v_i$, which 
requires $d$ matrix multiplications 
between a differentiation matrix of size 
$n \times n$ and a FTT core $\bm \Psi_i$ 
of size $n \times r n$ for total 
complexity of $\mathcal{O}(dn^3r)$. 
We also need to compute the outer 
product $(\nabla v_i) \bm x^{\top}$ where 
each entry of $\nabla v_i$ is in FTT format, 
and, for each of the $d^2$ 
entries in the matrix $(\nabla v_i) \bm x^{\top}$ 
compute an integral of FTT tensors requiring $\mathcal{O}(dnr^3)$ FLOPS. 
The final estimate for computing the matrix $\bm D_i$ 
is $\mathcal{O}(d^2n^3r^3)$, which dominates the 
cost of performing one-step of \eqref{step_truncation_scheme}.
We point out that the computation of $\bm D_i$ 
may be incorporated into high performance computing algorithms for tensor train rounding, e.g.,  \cite{Hussam_parr_TT,Townsend_21}.

\section{Application to multivariate functions}
\label{sec:3Dfunction}

We now demonstrate the Riemannian gradient 
descent algorithm for generating rank-reducing linear 
coordinate transformations. 
%
%\subsection{Time-independent function}
Consider the Gaussian mixture 
\begin{equation}
\label{Gaussian_mixture}
u(\bm x) = 
\sum_{i=1}^{N_g} w_i \exp\left(- \sum_{j=1}^d 
\frac{1}{\beta_{ij}} \left( \bm R^{(i)}_{j} \cdot \bm x + t_{ij} \right)^2 \right) , 
\end{equation}
where $N_g$ is the number of Gaussians, $\beta_{ij}$ are positive real numbers, 
$w_i$ are positive weights satisfying $$\displaystyle\sum_{i=1}^d w_i = 1,$$
$\bm R_{j}^{(i)}$ is the $j$-th row of a $d\times d$ rotation matrix $\bm R^{(i)}$, 
and $t_{ij}$ are translations.
\begin{figure}[!t]
\centerline{\footnotesize\hspace{.5cm} (a)   \hspace{7cm} (b) \hspace{1cm}}
\vspace{.0cm}
	\centering
\includegraphics[scale=0.39]{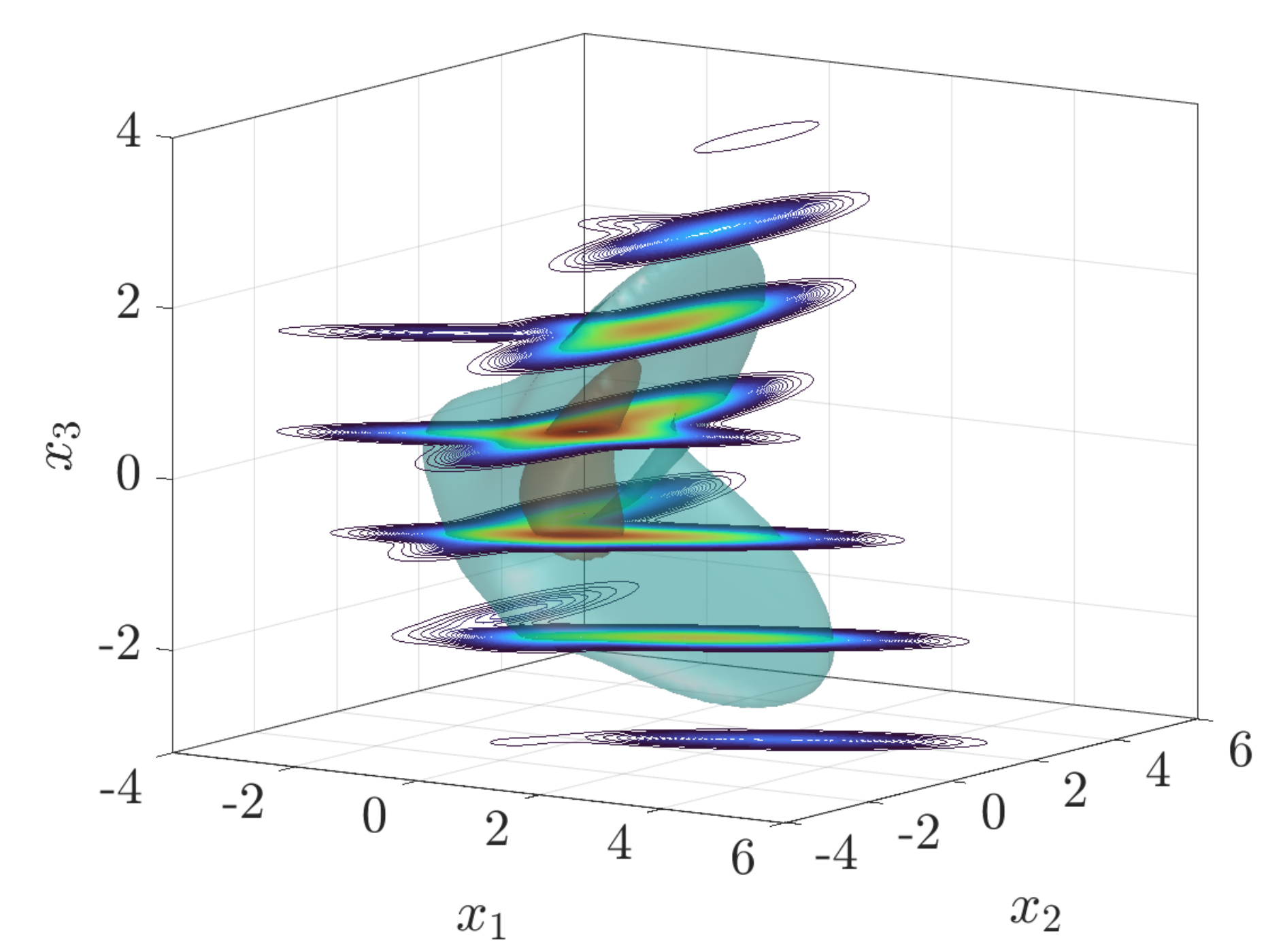} \hspace{.1cm}
\includegraphics[scale=0.39]{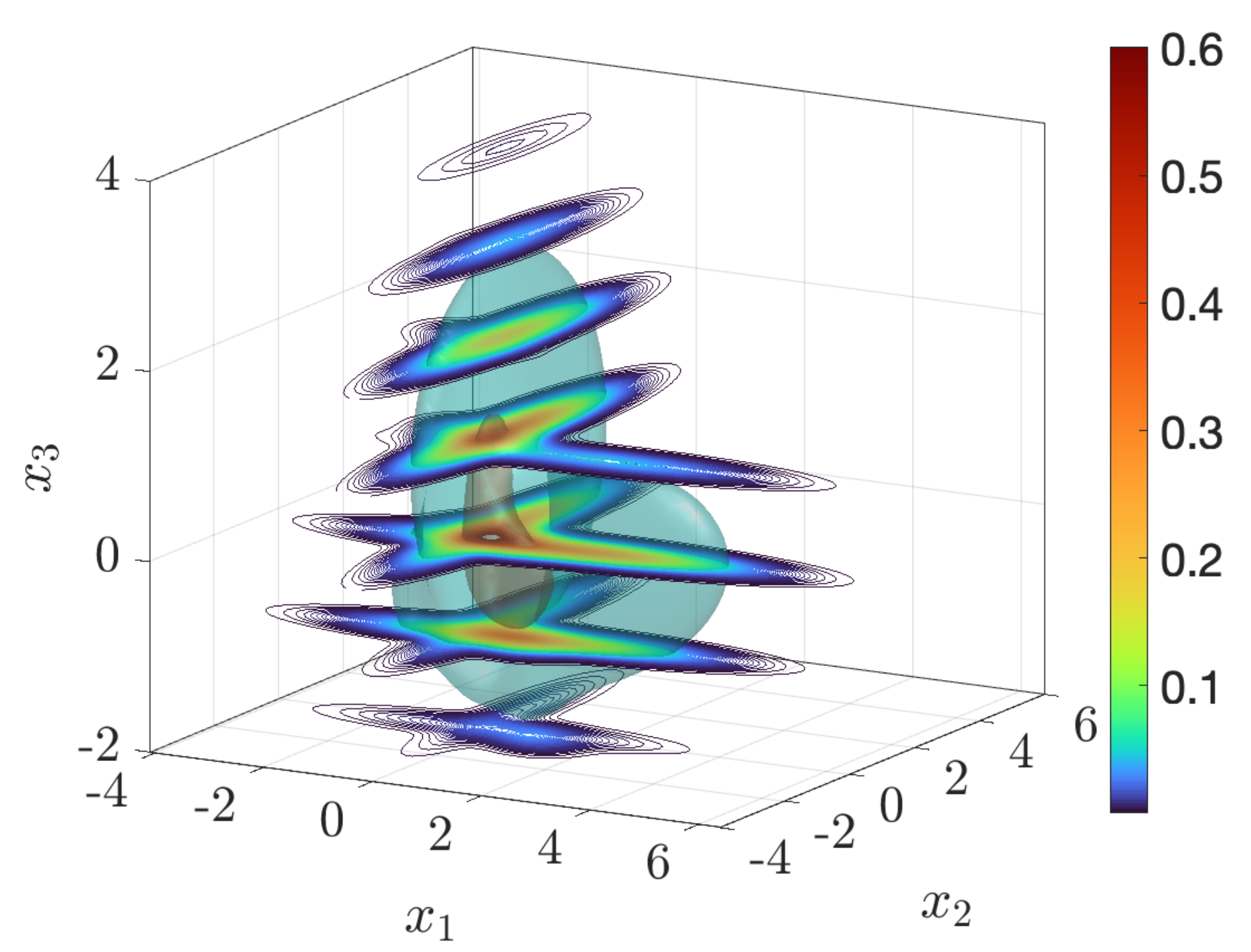} \\
\caption{(a) Volumetric plot of the three-dimensional Gaussian mixture \eqref{Gaussian_mixture}. 
(b) Volumetric plot of the corresponding reduced 
rank ridge tensor $v(\bm x ;\epsilon_f)$.}
\label{fig:contour_slices}
\end{figure}
\begin{figure}[!t]
\centerline{\footnotesize\hspace{.5cm} (a)  \hspace{1.3cm}    \hspace{6.5cm} (b) }
\vspace{.1cm}
	\centering
\includegraphics[scale=0.39]{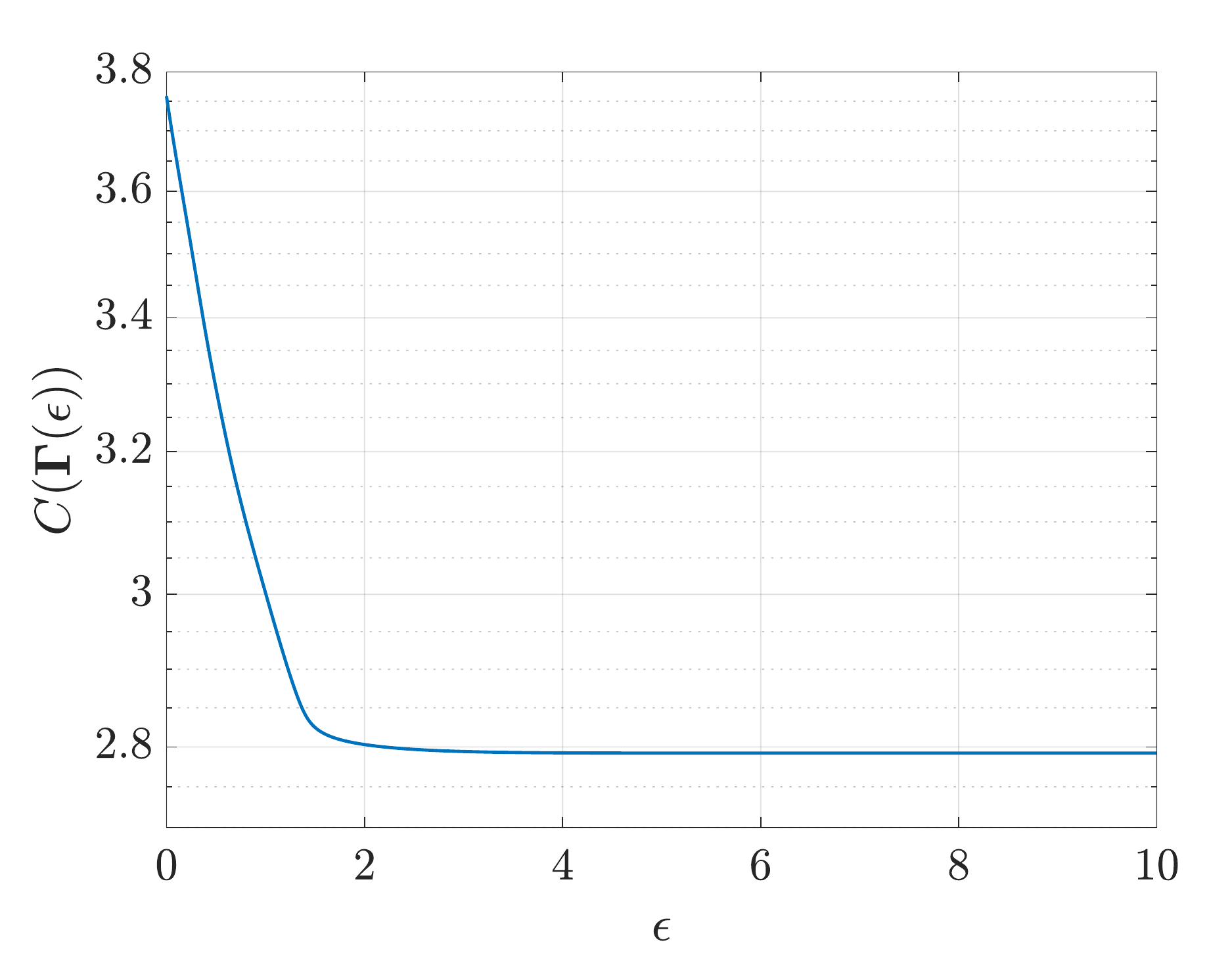} \hspace{.4cm}
\includegraphics[scale=0.39]{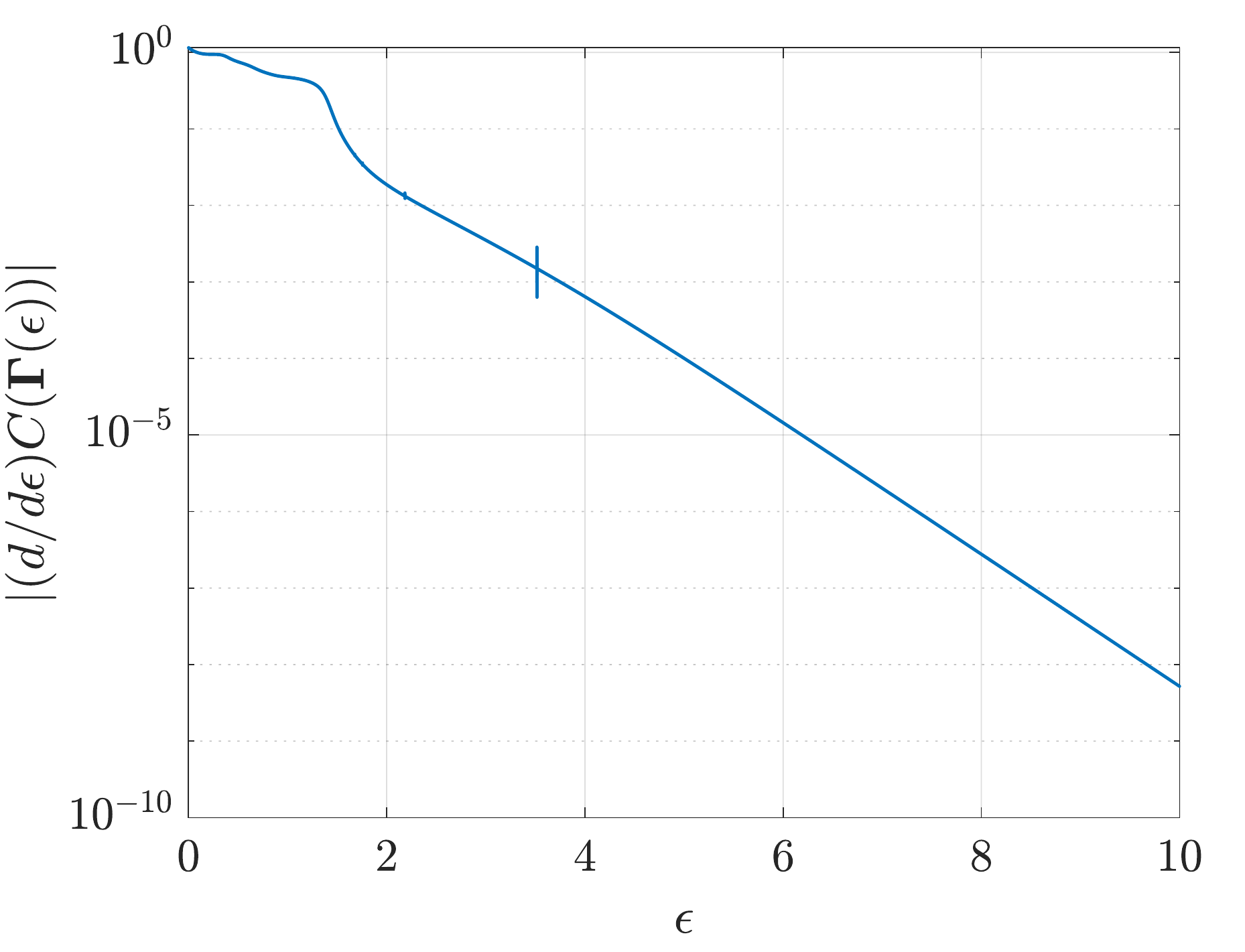} \\
\caption{Rank reduction problem via coordinate flow 
for the three-dimensional Gaussian mixture \eqref{Gaussian_mixture}. 
(a) Cost function \eqref{relaxed_rank_min} evaluated along a 
steepest descent mapping $\bm \Gamma(\epsilon)$ versus $\epsilon$. 
(b) Absolute value of the derivative of the cost function in \eqref{relaxed_rank_min} versus $\epsilon$. The derivative is computed with a second-order backwards finite difference stencil \eqref{BD}. }
\label{fig:cost}
\end{figure}
{ 
For our demonstration we consider 
three spatial dimensions ($d=3$)} and 
set $N_g = 3$, $w_i = 1/3$,  
\begin{equation}
\begin{array}{l l l}
\beta_{11} = 2,  & \beta_{12} = 1/3, &  \beta_{13} = 1/2, \\
\beta_{21} = 3,  & \beta_{22} = 4,  & \beta_{23} = 1/6, \\
\beta_{31} = 1,  & \beta_{32} = 1/5,   & \beta_{33} = 5, 
\end{array} \qquad 
\begin{array}{l l l}
t_{11} = 0, & t_{12} = 0, & t_{13} = 0, \\
t_{21} = -1, & t_{22} = 1/2, & t_{23} = -1/3, \\
t_{31} = 1/2, & t_{32} = -1/4, & t_{33} = 1,
\end{array}
\end{equation}
and the rotation matrices 
\begin{equation}
\bm R^{(i)} = \exp\left( \begin{bmatrix}
0 & \theta_i(1) & \theta_i(2) \\
-\theta_i(1)  & 0 & \theta_i(3) \\
  -\theta_i(2) & -\theta_i(3) & 0
\end{bmatrix} \right), 
\end{equation}
with 
\begin{equation}
\bm \theta_1 = \begin{bmatrix}
\pi/4 \\
\pi/3 \\
\pi/5
\end{bmatrix},  \quad
\bm \theta_2 = \begin{bmatrix}
\pi/3 \\
\pi/6 \\
\pi/4
\end{bmatrix},  \quad
\bm \theta_3 = \begin{bmatrix}
\pi/3 \\
\pi/3 \\
\pi/7
\end{bmatrix}. 
\end{equation}
We discretize the Gaussian mixture \eqref{Gaussian_mixture} 
on the computational domain $[-12,12]^3$
(which is large enough to enclose the numerical support 
of \eqref{Gaussian_mixture}) using $200$ evenly-spaced points 
in each variable. 
From the discretization of 
\eqref{Gaussian_mixture}
we compute the FTT decomposition 
$u_\T(\bm x)$ using recursive SVDs. 
%
%All FTT computations are performed with the 
%Matlab ``TT-Toolbox'' \cite{TT-toolbox}.
%
%
\begin{figure}[!t]
\centerline{\footnotesize\hspace{-0.5cm} (a)  \hspace{7.2cm}   (b)  }
\vspace{0cm}
	\centering
\hspace{-.8cm}\includegraphics[scale=0.4]{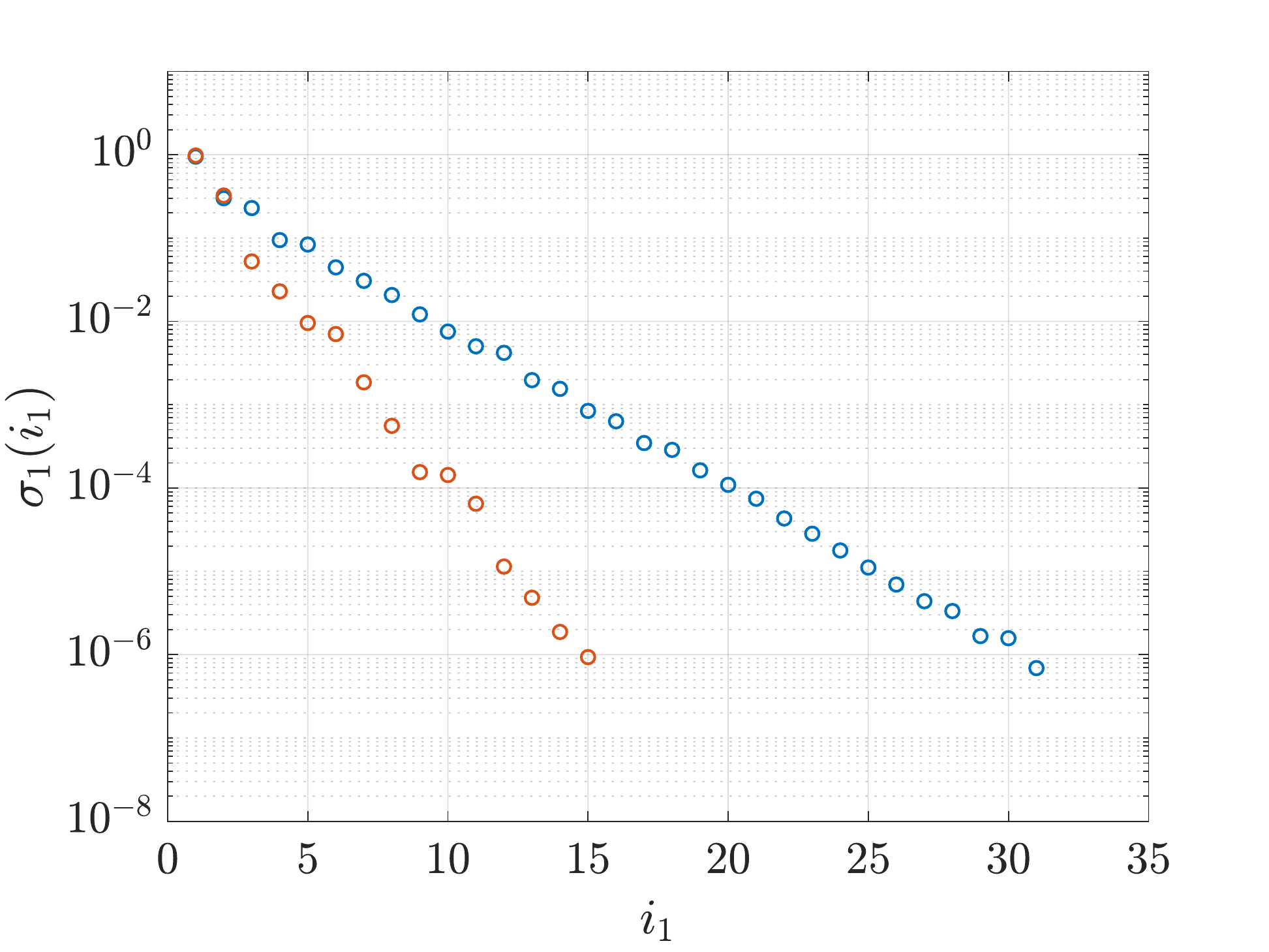} \hspace{-.4cm}
\includegraphics[scale=0.4]{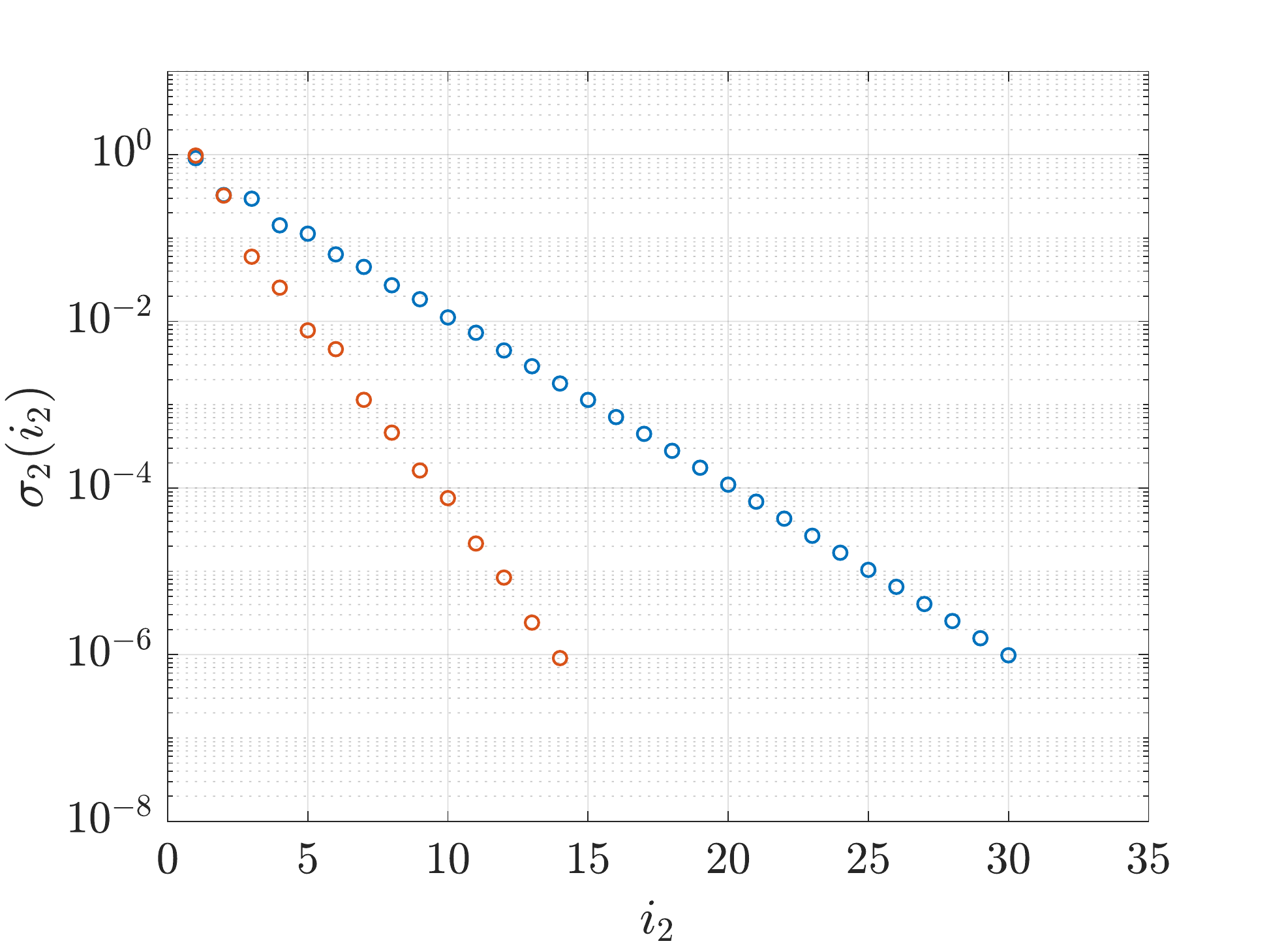} \\ 
\vspace{-.3cm}
\includegraphics[scale=0.48]{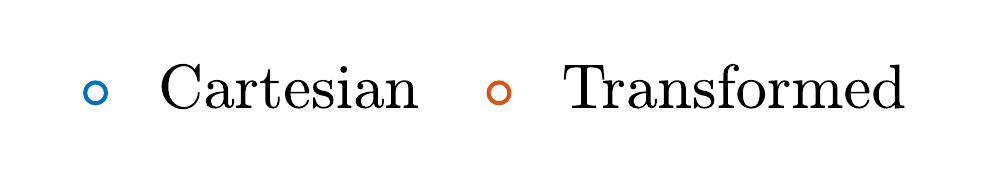} 
\hspace{-.4cm}
\centerline{\footnotesize\hspace{0cm} (c)  }
\includegraphics[scale=0.4]{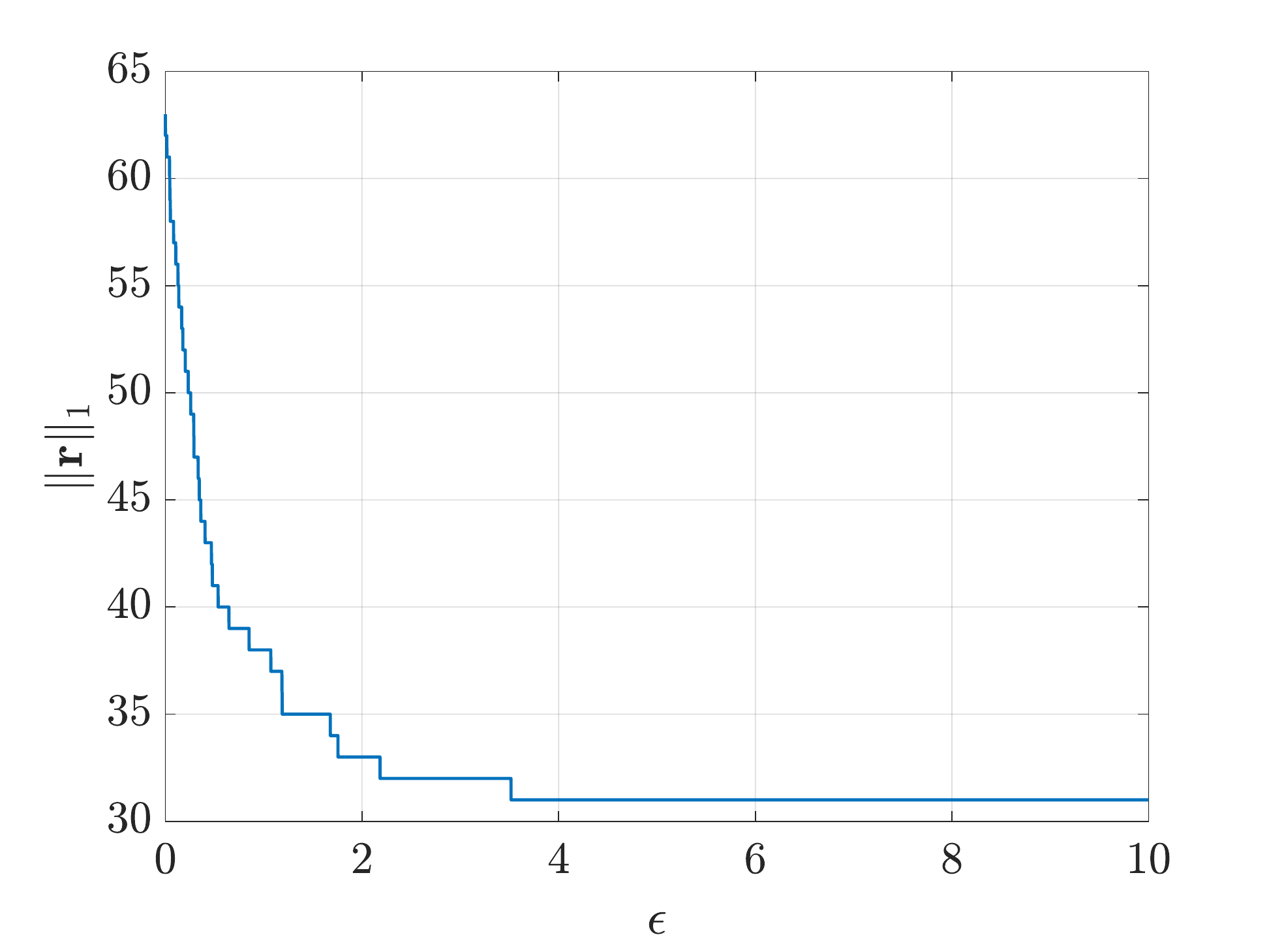} 
\caption{Multilinear spectra of the 3-dimensional Gaussian mixture $v(\bm x;0)$ defined in \eqref{Gaussian_mixture} (Cartesian coordinates) and 
the corresponding reduced-rank ridge tensor $v(\bm x ;\epsilon_f)$ (transformed coordinates). 
(a) Spectra $\sigma_1$ corresponding to multilinear rank $r_1$. 
(b) Spectra $\sigma_2$ corresponding to multilinear rank $r_2$. 
(c) $1$-norm of the multilinear rank vector of 
$v_{\T}(\bm x; \epsilon)$ versus $\epsilon$. It is seen that the coordinate flow 
$\bm \Gamma(\epsilon)\bm x$ reduces the multilinear rank of the Gaussian mixture
\eqref{Gaussian_mixture} from about $63$ (Cartesian coordinates) to $31$ (transformed coordinates).}
\label{fig:spectra}
\end{figure}
To integrate the PDE \eqref{PDE_for_tensor_descent} for 
the reduced rank ridge tensor 
$v(\bm x; \epsilon)$, we use the explicit two-step 
Adams-Bashforth step-truncation method 
with $\Delta \epsilon = 10^{-3}$ and 
relative FTT truncation accuracy $\delta = 10^{-6}$. 
All spatial derivatives and integrals are 
computed by applying 
one-dimensional pseudo-spectral Fourier 
differentiation matrices 
and quadrature weights \cite{spectral_methods_book} 
to the tensor modes. 
We integrate 
up to $\epsilon_f = 10$ 
which is sufficient to demonstrate 
convergence of the gradient descent method. 
In Figure \ref{fig:contour_slices} we 
provide volumetric plots of the 
of the function $v(\bm x; \epsilon)$ for 
$\epsilon = 0$ and $\epsilon = \epsilon_f$. 
We observe that the reduced rank ridge 
tensor $v(\bm x; \epsilon_f)$ appears to be more 
symmetrical with 
respect to the $(x_1,x_2,x_3)$-axes 
than the higher rank function $v(\bm x; 0)$. 
In Figure \ref{fig:cost}(a) we plot the cost 
function $C (\bm \Gamma(\epsilon))$ versus 
$\epsilon$ and in Figure 
\ref{fig:cost}(b) we plot the absolute value of
the derivative of the cost function versus $\epsilon$. 
Observe in Figure \ref{fig:cost} 
that $\bm \Gamma(\epsilon)$ appears to be 
converging to a local minimum of $C$, and, 
the majority of the decrease in the cost function 
occurs in the interval $\epsilon \in [0,2]$. 
Correspondingly, we notice that in Figure \eqref{fig:spectra}(c) 
the majority of rank increase 
occurs in the interval $\epsilon \in [0,2]$. 
Finally, in Figure \ref{fig:spectra}(a)-(b) we plot 
the multilinear spectra of the function $v_\T(\bm x;\epsilon)$ 
for $\epsilon = 0$ and $\epsilon = \epsilon_f$. 
The decay rate of the multilinear spectra 
corresponding to $v(\bm x;\epsilon_f)$ is 
significantly faster than the decay rate of the 
multilinear corresponding to the initial 
function $v(\bm x; 0)$, resulting in a multilinear 
rank of about half the one in Cartesian coordinates. 
Thus, for any truncation tolerance $\delta$, the FTT-ridge 
tensor $v(\bm x;\epsilon_f)$ can be stored at a significantly 
lower cost than the original function. Intuitively, the savings 
that can be obtained by the coordinate flow in 
higher-dimensions are even more pronounced, since 
hierarchical SVDs with steeper spectra yield a much 
smaller number of tensor modes.

\section{Application to PDEs}
\label{sec:PDEs}
We now apply the proposed rank 
tensor reduction method to initial-value problems of the form 
\begin{equation}
\begin{cases}
\displaystyle\frac{\partial u(\bm x,t) }{\partial t} = G(u(\bm x,t), {  \bm x}), \\
u(\bm x,0) = u_0 (\bm x), 
\label{nonlinear-ibvp0} 
\end{cases}
\end{equation}
where $G$ is a nonlinear operator that may
incorporate boundary conditions. 
{  As is well-known, solving \eqref{nonlinear-ibvp0}
numerically involves repeated application 
of $G$. In particular, if the approximate 
solution of the given PDE is represented as a FTT tensor 
$u \approx u_\T$ then the operator 
$G$ must be represented in a form that can 
take $u_\T$ as an input and output 
another FTT tensor.
If $G$ is a linear operator then such a representation 
is given by the rank $\bm g$ FTT-operator 
(or TT-matrix after disceretization \cite{OseledetsTT}) 
\begin{equation}
\label{separated_operator}
G(\cdot,\bm x) \approx G_\T(\cdot,\bm x) = 
\sum_{\alpha_1=1}^{g_1} \sum_{\alpha_2=1}^{g_2} \cdots 
\sum_{\alpha_{d-1}=1}^{g_{d-1}} \bm A_1(x_1;\alpha_1) \otimes \bm A_2(\alpha_1;x_2;\alpha_2) \otimes 
\cdots \otimes \bm A_d(\alpha_{d-1};x_d), 
\end{equation}
where, for fixed 
$\alpha_{i-1}$ and $\alpha_i$, $\bm A_j$ is a 
one-dimensional operator acting only on 
functions of $x_j$. 
The representation \eqref{separated_operator} is also 
known as matrix product operator (MPO) \cite{Hubig}.
%Many nonlinear operators can also be handled 
%in FTT format in a relatively straightforward way 
%(for example in section \ref{sec:Allen-Cahn} we solve a nonlinear 
%reaction-diffusion equation using FTT tensors). 
%
After applying $G_\T$ to a FTT tensor 
$u_\T$ with rank $\bm r$, the 
new FTT tensor $G_\T (u_\T,\bm x)$ has rank 
given by the element-wise (Hadamard) product of the two ranks 
$\bm g \circ \bm r$, which then has to be truncated. 
The computational cost of 
such a truncation scales cubically in the new FTT rank. 
If the product rank $\bm g \circ \bm r$ is 
prohibitively large then the FTT operator 
$G_\T$ can be split into sums of low 
rank operators
\begin{equation}
\label{split_operator}
G_\T = \sum_{k=1}^n G^{(k)}_\T, 
\end{equation}
where each $G_\T^{(k)}$ has FTT operator rank $\bm g^{(k)}$,
which is less than $\bm g$. 
Hence, instead of applying $G_\T$ directly to the solution 
tensor, we can apply each $G_\T^{(k)}$ ($k=1,2,\ldots,n$) 
to $u_\T$, truncate each $G^{(k)}_\T (u_\T,\bm x)$, and 
then add them together. 
After the addition, one more truncation procedure must 
be performed to ensure the result of the FTT 
addition has optimal ranks. 
This procedure can be written mathematically as 
\begin{equation}
\label{split_operator_applied}
G_\T (u_\T,\bm x) \approx \mathfrak{T}_{\delta} \left[ \sum_{k=1}^n \mathfrak{T}_{\delta} \left( G_\T^{(k)} (u_\T,\bm x) \right) \right], 
\end{equation}
where $\mathfrak{T}_{\delta}$ is a truncation 
(or rounding) operator for FTT tensors with 
relative accuracy $\delta$. 
Alternatively, one can use randomized 
algorithms, e.g., based on tensor sketching, for 
computing sums of many TT tensors \cite{Daas_randomized_TT_rounding}.
This can increase efficiency of 
applying high rank FTT operators 
to FTT tensors. 
For the PDEs 
considered hereafter we employ 
the truncation algorithm 
\eqref{split_operator_applied} to 
mitigate the cost of applying high rank operators.

\subsection{Coordinate transformation}
For a given PDE operator $G(\cdot,\bm x)$ and 
linear coordinate transformation 
$\bm y = \bm \Gamma \bm x$, it is always possible to 
write $G$ in coordinates $\bm y$ resulting in a 
new operator $G_{\bm \Gamma}$. 
If $G$ acts on $u_\T(\bm x,t_k)$ then 
$G_{\bm \Gamma}$ acts on the transformed 
tensor $u_\T(\bm y,t_k)=v_\T(\bm x,t_k)$. 
Such an operator can be constructed 
using standard tools of 
differential geometry \cite{Aris,Conjugateflows}, 
and usually has different FTT-operator 
rank than $G$. 
For example, consider the variable coefficient 
advection operator 
\begin{equation}
\label{advection_operator}
G(u(\bm x,t),\bm x) = \sum_{i=1}^d f_i(\bm x) \frac{\partial u}{\partial x_i}.
\end{equation}
The scalar field 
$u(\bm x,t_k)$ can be written in the new coordinate system 
$\bm y$ as
\begin{equation}
u(\bm x,t_k) =  u(\bm \Gamma^{-1} \bm y,t_k) = U(\bm y,t_k) 
= U(\bm \Gamma \bm x,t_k)  
\end{equation}
which implies that 
\begin{equation}
\frac{\partial u(\bm x,t)}{\partial x_j} = \sum_{k=1}^d \Gamma_{kj}\frac{\partial U(\bm y,t_k)}{\partial y_k}.
\end{equation}
In this way, we can rewrite the operator \eqref{advection_operator} 
in coordinates $\bm y=\bm \Gamma \bm x$ as
\begin{equation}
\label{advection_operator_transformed}
\begin{aligned}
G_{\bm \Gamma}\left(U(\bm y,t),\bm y\right) &= \sum_{i,j=1}^d \Gamma_{ij} f_j\left(\bm \Gamma^{-1}\bm y\right) \frac{\partial U(\bm y,t)}{\partial y_i} \\
&= \sum_{i=1}^d h_i\left(\bm y\right) \frac{\partial U(\bm y,t)}{\partial y_i}, 
\end{aligned}
\end{equation}
where 
\begin{equation}
h_i\left(\bm y\right) = 
\sum_{j=1}^d \Gamma_{ij} f_j\left(\bm \Gamma^{-1}\bm y\right).
\end{equation}
Note that $G_{\bm \Gamma}$ has a relatively simple 
form due to the linearity\footnote{For more general nonlinear coordinate 
transformations $\bm y=\bm H(\bm x)$, the operator 
$G_{\bm \Gamma}$ includes the metric tensor of the coordinate change, 
which can significantly 
complicate the form of $G_{\bm \Gamma}$ 
(e.g., \cite{Aris,Haoxiang}).} of the coordinate transformation. 
In this case, the rank of the operators 
$G$ and $G_{\bm \Gamma}$ are determined by 
the FTT ranks of the variable coefficients 
$f_i(\bm x)$ and $h_i(\bm y)$ ($i=1,2,\ldots,d$), respectively.  

%As a second example consider the $d$-dimensional 
%Laplacian operator 
%\begin{equation}
%\Delta = \sum_{i=1}^d \frac{\partial^2}{\partial x_i^2},
%\end{equation}
%which, in coordinates $\bm y = \bm \Gamma \bm x$ 
%takes the form 
%\begin{equation}
%\Delta_{\bm \Gamma} = \sum_{i=1}^d \left(\sum_{j=1}^d \Gamma_{ij} \right)\frac{\partial^2 }{\partial y_i^2} + 
%\sum_{\substack{i,j=1\\i>j}}^d 2 \left(\sum_{k=1}^d \Gamma_{} \right) \frac{\partial^2}{\partial y_i \partial y_j} 
%\end{equation}
}

{ 
\subsection{Time integration} 
\label{sec:time_dep_funs}
For the time integration of 
\eqref{nonlinear-ibvp0} using low-rank 
tensors we discretize 
the temporal domain of interest $[0, T]$ into $N+1$ 
evenly-spaced time instants,
\begin{equation}
\label{discrete_time}
t_k = k \Delta t, \qquad 
\Delta t = \frac{T}{N}, \qquad k=0,1,\ldots,N,
\end{equation}
%
%If the operator $G$ is not given as a FTT operator then 
%it must be approximated by a FTT operator $G_\T \approx G$. 
%
and consider the rank-adaptive step-truncation scheme  
\cite{adaptive_rank,rodgers2020step-truncation,rodgers2022implicit}
\begin{equation}
\label{step_truncation_scheme_time}
u_\T(\bm x,t_{k+1}) = \mathfrak{T}_{\delta} 
\left( u_\T(\bm x,t_k) + 
\Delta t \Phi\left(G_\T,u_{\T}(\bm x,t_k), \Delta t\right) \right),
\end{equation}
where $\Phi$ is {  a iteration function associated with a temporal discretization scheme and $G_\T$ is a FTT-operator approximation \eqref{separated_operator} of the given operator $G$.
For example, a step-truncation Euler forward 
scheme is 
\begin{equation}
\label{Euler_ST}
u_\T(\bm x,t_{k+1}) = \mathfrak{T}_{\delta} 
\left[ u_\T(\bm x,t_k) + 
\Delta t \mathfrak{T}_{\delta} \left( G_\T(u_{\T}(\bm x,t_k),\bm x)\right) \right],
\end{equation}
while a step-truncation Adams-Bashforth 2 (AB2) 
scheme\footnote{  Variants of these 
step-truncation schemes can be obtained by inserting 
or removing truncation operations 
between summations, 
changing 
truncation tolerances $\delta$ in each of the 
truncation operators, or by 
using operator splitting 
\eqref{split_operator}.} can be written as 
\begin{equation}
\label{AB2_ST}
u_\T(\bm x,t_{k+1}) = \mathfrak{T}_{\delta} 
\left[ u_\T(\bm x,t_k) + 
\Delta t \left( 
\frac{3}{2} \mathfrak{T}_{\delta} \left[ G_\T(u_{\T}(\bm x,t_k),\bm x) \right]
- \frac{1}{2} \mathfrak{T}_{\delta} \left[ G_\T(u_{\T}(\bm x,t_{k-1}),\bm x)\right] \right) \right].
\end{equation}
} 

\vs
\begin{algorithm}[!t]
\SetAlgoLined
 \caption{PDE integrator with {  adaptive} rank-reducing coordinate transformations}
\label{alg:coord_adaptive_integrator}
\vspace{0.3cm}
 \KwIn{\vspace{0.1cm}\\\\
   $u_0$ $\rightarrow$ initial condition in FTT tensor format, \\
   $\Delta t$ $\rightarrow$ temporal step size, \\
   $N_t$ $\rightarrow$ total number of time steps, \\
   {  ${\rm max\ rank}$ $\rightarrow$ maximum rank during FTT integration before attempting rank reduction,} \\
   {  ${\rm max\ time}$ $\rightarrow$ maximum computational time for one time step before attempting rank reduction,} \\
   {  $k_r$ $\rightarrow$ increase for maximum rank after performing coordinate transformation,} \\
   {  $k_t$ $\rightarrow$ increase for maximum time after performing coordinate transformation,}\\
   $\Delta \epsilon$ $\rightarrow$ gradient descent step-size, \\
   ${  \eta}$ $\rightarrow$ tolerance for coordinate gradient descent, \\
  $M_{\rm iter}$ $\rightarrow$ maximum number of iterations for gradient descent routine.
 }
 \vspace{0.3cm}
 \KwOut{\vspace{0.1cm}\\
  $\bm \Gamma$ $\rightarrow$ rank-reducing linear coordinate transformation for PDE solution, \\
  $v_{\T}(\bm x, t_f)= u_\T(\bm \Gamma \bm x, t_f)$ $\rightarrow$ FTT solution tensor at time $t_f$ on rank-reducing coordinate system.}
\vspace{0.3cm}
{\bf Runtime:} 
\begin{itemize}
\item [] $\bm \Gamma = \bm I$,
\item [] $v_0 = u_\T$, \\
\item [] {\bf for $k = 0$ to $N_t$ }
\begin{itemize}
\item [] { \bf if ${\rm time} > {\rm max\ time}$ or ${\rm rank} > {\rm max\ rank}$}
\begin{itemize}
%\tcp{Run gradient descent routine for low-rank coordiantes:} 
\item [] $ [ v_{k}, \bm \Gamma_{\rm new} ] = {\rm gradient \ descent}( v_{k}, \Delta \epsilon, {  \eta}, M_{\rm iter})$
\item [] $\bm \Gamma = \bm \Gamma_{\rm new} \bm \Gamma$ 
\item[] {  ${\rm max\ rank} = {\rm max\ rank} + k_r$}
\item[] {  ${\rm max\ time} = {\rm max\ time} + k_t$}
\end{itemize}
\item [] {\bf end} 
% \tcp{Update FTT tensor using the step-truncation integration scheme 
% \eqref{step_truncation_scheme_time}:} 
 \item [] $[v_{k+1}, {  {\rm time}, {\rm rank}}] = \mathfrak{T}_{\delta} \left( v_k + \Delta t \Phi(v_k, G_{\T,\bm \Gamma}, \Delta t) \right)$ \\
\end{itemize}
  {\bf end}
\end{itemize}
\end{algorithm}
\vs

At any time step $t_k$ during temporal integration of 
\eqref{nonlinear-ibvp0} we may compute 
a rank-reducing coordinate transformation 
$\bm y = \bm \Gamma \bm x$ and obtain a 
tensor ridge representation of the solution 
at time $t_k$. 
To integrate the initial boundary 
value problem \eqref{nonlinear-ibvp0} 
using the tensor ridge representation of 
the solution at time $t_k$, 
the operator $G_\T$ may be rewritten 
as a new (FTT) operator $G_{\T,\bm \Gamma}$ 
acting in the transformed coordinate system.
With the operator $G_{\T,\bm \Gamma}$ available, we 
can write the following PDE for $U(\bm y,t)$ 
corresponding to \eqref{nonlinear-ibvp0} 
\begin{equation}
\begin{cases}
\displaystyle\frac{\partial U(\bm y,t)}{\partial t} = G_{\T,\bm \Gamma}(U(\bm y,t),\bm y), \qquad t\geq t_k, \\
U(\bm y,t_{k}) =  v_{\T}(\bm y,t_{k}), 
\label{nonlinear-ibvp_y} 
\end{cases}
\end{equation}
with initial condition given at time $t_k$. 
Time integration can proceed in the 
transformed coordinate system by 
applying a step-truncation 
scheme \eqref{step_truncation_scheme_time}-\eqref{AB2_ST} to 
the transformed PDE \eqref{nonlinear-ibvp_y} resulting 
in a step-truncation scheme in coordinates $\bm y = \bm \Gamma \bm x$
\begin{equation}
\label{step_truncation_scheme_time_y}
v_\T(\bm y,t_{k+1}) = \mathfrak{T}_{\delta} 
\left[ v_\T(\bm y,t_k) + 
\Delta t \Phi\left(G_{\T,\bm \Gamma},v_{\T}(\bm y,t_k), \Delta t\right) \right]. 
\end{equation}
It is well-known that the computational cost of the scheme 
\eqref{step_truncation_scheme_time} 
(or \eqref{step_truncation_scheme_time_y}) 
scales linearly in the problem dimension $d$ and 
polynomially in the tensor rank of the solution 
and the operator. 
To determine an optimal 
coordinate transformation for 
reducing the overall cost of temporal integration it is 
necessary to have more precise estimates 
on the computational cost of one time step. 
Such computational cost depends on many factors, e.g., 
the increment function $\Phi$, the separation rank 
of the PDE operator $G_{\bm \Gamma}$, the 
operator splitting \eqref{split_operator} used, 
the FTT rank of the PDE solution 
$U(\bm y,t_k)$ at time $t_k$, the rank 
of the operator applied to the 
solution after truncation 
$\mathfrak{T}_{\delta}\left( G_{\bm \Gamma}(U(\bm y,t_k))\right)$, etc.
From this observation it is clear that in order to obtain 
a optimal coordinate transformation for reducing the overall 
computational cost of temporal integration with step-truncation, 
we must take into consideration all these factors, 
in particular the solution rank and the operator rank. 
We emphasize that determining a coordinate 
transformation $\bm \Gamma$ that controls the separation 
rank of a general nonlinear operator $G_{\T,\bm \Gamma}$ 
is a non-trivial problem that we do not address 
in the present paper. 
\vs
\begin{algorithm}[!t]
\SetAlgoLined
 \caption{  PDE integrator with coordinate corrections at each time step.}
\label{alg:one_step_one_step}
\vspace{0.3cm}
 \KwIn{\vspace{0.1cm}\\\\
   $u_0$ $\rightarrow$ initial condition in FTT tensor format, \\
   $\Delta t$ $\rightarrow$ temporal step size, \\
   $N_t$ $\rightarrow$ total number of time steps, \\
   $\Delta \epsilon$ $\rightarrow$ gradient descent step-size, \\
   ${  \eta}$ $\rightarrow$ tolerance for coordinate gradient descent, \\
  $M_{\rm iter}$ $\rightarrow$ maximum number of iterations for gradient descent routine.
 }
 \vspace{0.3cm}
 \KwOut{\vspace{0.1cm}\\
  $\bm \Gamma$ $\rightarrow$ rank-reducing linear coordinate transformation for PDE solution, \\
  $v_{\T}(\bm x, t_f)= u_\T(\bm \Gamma \bm x, t_f)$ $\rightarrow$ FTT solution tensor at time $t_f$ on rank-reducing coordinate system.}
\vspace{0.3cm}
{\bf Runtime:} 
\begin{itemize}
\item [] $\bm \Gamma = \bm I$,
\item [] $v_0 = u_\T$, \\
\item [] {\bf for $k = 0$ to $N_t$ }
\begin{itemize}
%\tcp{Run gradient descent routine for low-rank coordiantes:} 
\item [] $ [ v_{k}, \bm \Gamma_{\rm new} ] = {\rm gradient \ descent}( v_{k}, \Delta \epsilon, {  \eta}, M_{\rm iter})$
\item [] $\bm \Gamma = \bm \Gamma_{\rm new} \bm \Gamma$ 
% \tcp{Update FTT tensor using the step-truncation integration scheme 
% \eqref{step_truncation_scheme_time}:} 
 \item [] $v_{k+1} = \mathfrak{T}_{\delta} \left( v_k + \Delta t \Phi(v_k, G_{\T,\bm \Gamma}, \Delta t) \right)$ \\
  \end{itemize}
  {\bf end}
\end{itemize}
\end{algorithm}
\vs
%
%\subsection{Rank-reducing algorithms leveraging coordinate transformations}
\subsection{Coordinate-adaptive time integration}
Next we develop coordinate-adaptive time 
integration schemes for PDEs on FTT manifolds 
that are designed to control the solution rank, 
the PDE operator rank, or the rank of the right
hand side of the PDE.}
{  
\noindent
The first coordinate-adaptive algorithm (Algorithm \ref{alg:coord_adaptive_integrator}) 
is designed to attempt a rank 
reducing coordinate transformation if 
the computational cost of time integration 
in the current coordinate system 
exceeds a predetermined threshold. 
The computational cost of one time step 
may be measured in different ways, e.g., by the 
CPU-time it takes to perform one 
time step, by the rank of the 
solution, or by the rank of the right 
hand side of the PDE (operator applied to the solution).
%
%The main components of this coordinate-adaptive 
%algorithm are summarized in Algorithm 
%\ref{alg:coord_adaptive_integrator}. 
%
%
In the second to last line of Algorithm 
\ref{alg:coord_adaptive_integrator}, 
``${\rm time}$'' denotes the computational time 
it takes to compute one time step 
and ``${\rm rank}$'' denotes either the solution rank, 
the rank of the PDE right hand side, or the maximum 
of the two.

The second algorithm (Algorithm \ref{alg:one_step_one_step}) 
we propose for coordinate-adaptive tensor integration of PDEs 
is based on computing a small correction 
%(along the steepest Riemannian gradient descent) 
of the 
coordinate system at every time step. 
In practice, we compute one $\epsilon$-step 
of \eqref{PDE_for_tensor_descent} at every 
time step during temporal integration of the given PDE.
This yields a PDE in which the 
operator (which depends on the coordinate system) 
changes at every time step, i.e., a time-dependent 
operator induced by the time-dependent 
coordinate change. 

Hereafter we apply these coordinate-adaptive algorithms 
to five different PDEs and compare the 
results with conventional FTT integrators in 
fixed Cartesian coordinates. 
All numerical 
simulations were run in 
Matlab 2022a on a 2021 MacBook Pro 
with M1 chip and 16GB RAM, 
spatial derivatives and 
integrals were approximated 
with one-dimensional 
Fourier pseudo-spectral 
differentiation matrices and quadrature 
rules \cite{spectral_methods_book},
and various explicit step-truncation 
time integration schemes were used. 
}
{  
\subsection{2D linear advection equations}
\label{sec:time_dependent_function}
First we apply coordinate-adaptive 
tensor integration to the 
2D linear advection equation 
\begin{equation}
\label{prototype-transport-PDE0}
\begin{cases}
\displaystyle \frac{\partial u(\bm x,t)}{\partial t} = 
f_1(\bm x) \frac{\partial u(\bm x,t)}{\partial x_1} + 
f_2(\bm x) \frac{\partial u(\bm x, t)}{\partial x_2},  \\
u(\bm x,0) = u_0(\bm x),
\end{cases}
\end{equation}
with two different sets of coefficients $f_i(\bm x)$ 
specified hereafter.  
Each example is designed to demonstrate different features of the 
proposed coordinate-adaptive algorithms (Algorithm \ref{alg:coord_adaptive_integrator} and 
Algorithm \ref{alg:one_step_one_step}). 

\begin{figure}[!t]
\centerline{\footnotesize\hspace{-.4cm} (a) 
\hspace{5cm}  (b)  \hspace{4.75cm} (c) }
\vspace{-.3cm}
\centering
\includegraphics[scale=0.29]{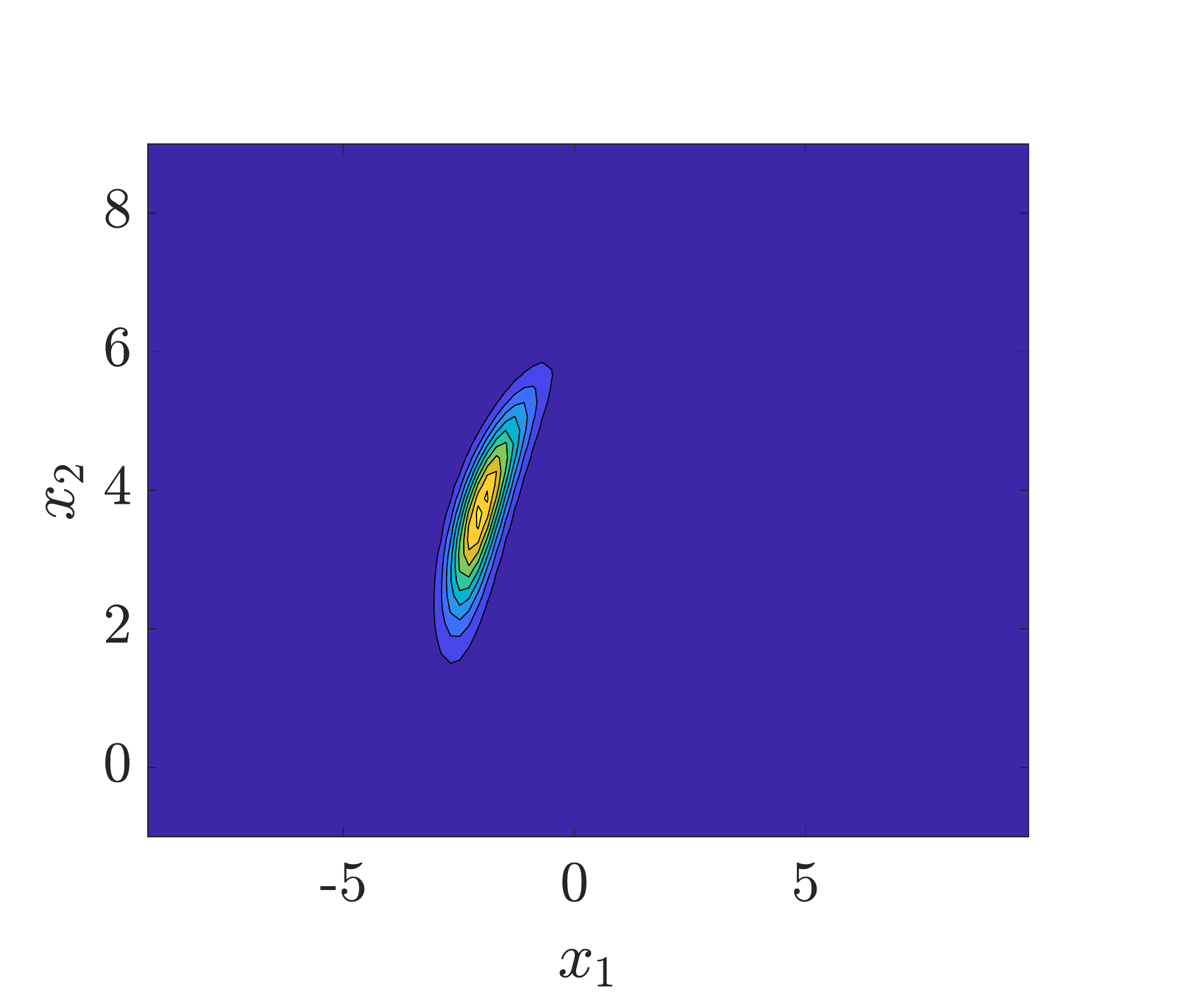} 
\includegraphics[scale=0.29]{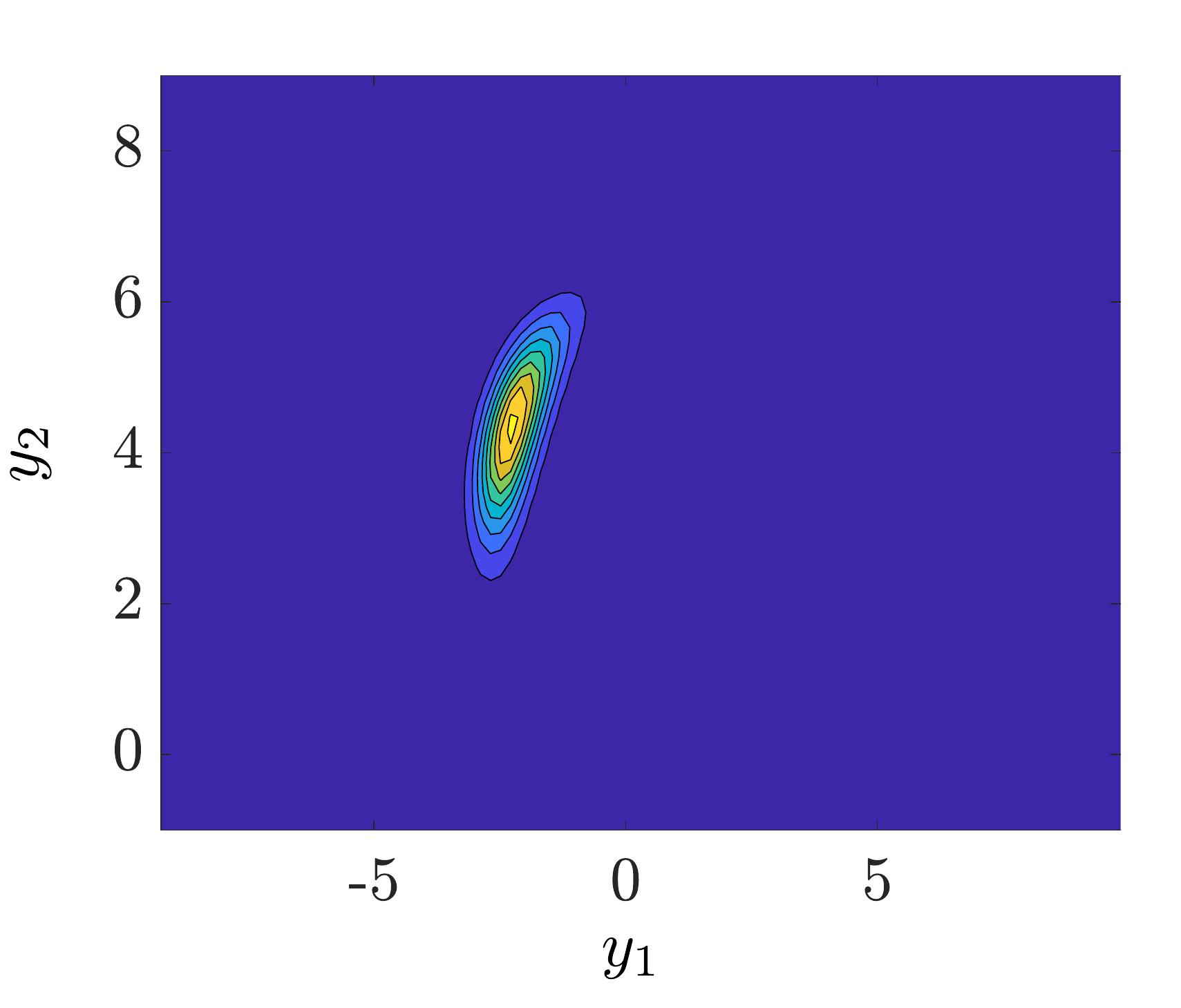} 
\includegraphics[scale=0.29]{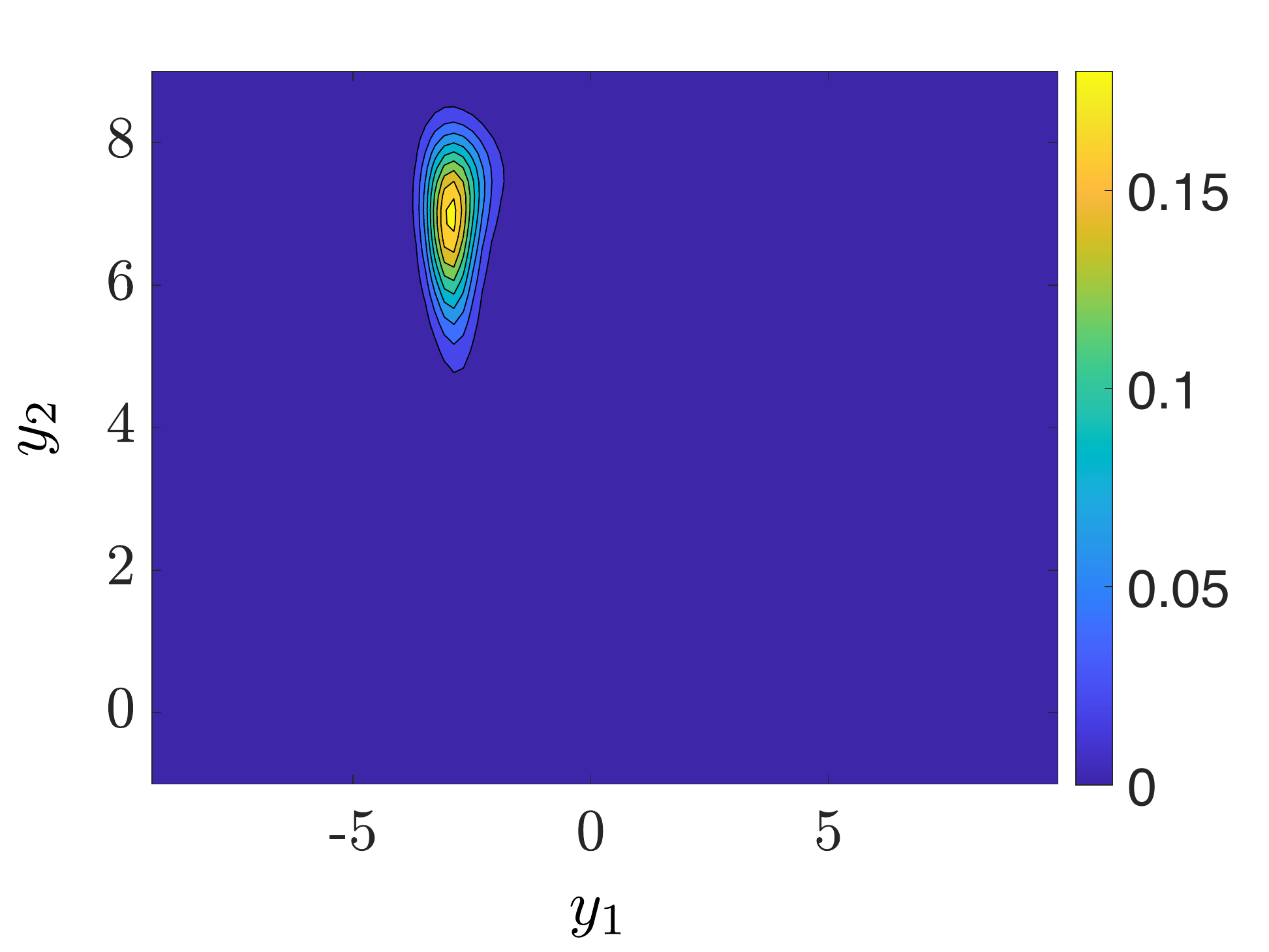} 
\caption{  Solution to the linear advection equation 
\eqref{prototype-transport-PDE0} with coefficients 
\eqref{vrtx_vf} at time $t = 30$. (a) Cartesian coordinates, 
(b) tensor ridge simulation 1 using on Algorithm \ref{alg:coord_adaptive_integrator} with $\Delta \epsilon = 10^{-4}$, 
$M_{\rm iter} = 2000$, and ${  \eta} = 10^{-1}$,
(c) tensor ridge simulation 2 
using Algorithm \ref{alg:one_step_one_step} with $M_{\rm iter} = 1$ and 
$\Delta \epsilon = 10^{-3}$. 
In all simulations the truncation tolerance is 
set to $10^{-6}$.}
\label{fig:vortex_PDE}
\end{figure}
In the first example, we generate the vector field 
$\bm f(\bm x)=(f_1(\bm x),f_2(\bm x))$ via the two-dimensional 
stream function \cite{Venturi_2012}
\begin{equation}
\psi(x_1,x_2) = \Theta(x_1) \Theta(x_2)
\end{equation}
with 
\begin{equation}
\label{ns_eig_fun}
\Theta(x) = \frac{\cos(\alpha x/L)}{\cos(\alpha/2)} - \frac{\cosh(\alpha x/L)}{\cosh(\alpha/2)},
\end{equation}
$L=30$ and  $\alpha = 4.73$.
Such a stream function generates the divergence-free 
vector field (see Figure \ref{fig:vf}(a))
\begin{equation}
\label{vrtx_vf}
f_1(\bm x) = \frac{\partial \psi}{\partial x_2}, \qquad
f_2(\bm x) = -\frac{\partial \psi}{\partial x_1}. 
\end{equation}
We set the initial condition 
\begin{equation}
\label{initial_vortex_fun}
u_0(\bm x) = \frac{1}{m} \exp\left(-4(x_1 - 2)^2\right)
\exp\left(-\frac{(x_2 - 2)^2}{2}\right),
\end{equation}
where 
\begin{equation}
\nonumber
m=\left\|\exp\left(-4(x_1 - 2)^2\right)
\exp\left(-\frac{(x_2 - 2)^2}{2}\right)\right\|_{L^2(\Omega)}
\end{equation}
is a normalization constant.
%
%To solve the PDE \eqref{prototype-transport-PDE0} numerically 
%we discretize the computational domain $\Omega=[-L/2,L/2]^2$ 
%(which is large enough to contain the numerical 
%support of the PDE solution $u(\bm x, t)$ for 
%all $t \in [0,35]$) 
%with $150$ evenly spaced points (Fourier collocation) 
%in each dimension ($22500$ total points)  and integrate 
%from $t=0$ to $t=35$ using the Euler forward 
%method with step-size $\Delta t = 10^{-2}$. 
%
%The spatial derivatives and integrals are approximated 
%by applying one-dimensional Fourier pseudo-spectral 
%differentiation matrices and quadrature 
%rules \cite{spectral_methods_book}. 

We first ran one rank-adaptive tensor simulation 
in fixed Cartesian coordinates. 
We then ran two coordinate-adaptive simulations 
that use rank-reducing coordinate transformations 
during time integration. 
In the first coordinate-adaptive simulation we use 
Algorithm \ref{alg:coord_adaptive_integrator} 
with ${\rm max\ rank} = 15$ and $k_r=0$ 
to initialize coordinate transformations 
during time integration. 
For the Riemannian gradient descent algorithm 
that computes the rank reducing coordinate transformation 
we set step-size $\Delta \epsilon = 10^{-4}$, 
maximum number of iterations $M_{\rm iter} = 
2000$ and stopping tolerance 
${  \eta} = 10^{-1}$. 
In the second coordinate-adaptive simulation we use 
Algorithm \ref{alg:one_step_one_step} 
with $M_{\rm iter} = 1$ and 
$\Delta \epsilon = 10^{-3}$, i.e., 
the integrator performs one step of time integration 
followed by one step of the Riemannian gradient descent 
algorithm \ref{alg:gradient_descent}. 
In both coordinate-adaptive simulations 
we set the truncation threshold $\delta = 10^{-6}$. 

In Figure \ref{fig:vortex_PDE} we plot the solutions 
obtained from each of the three simulations 
at time $t=30$. 
In order to check the accuracy of integrating 
the PDE solution in the low-rank coordinate 
system we mapped the transformed solution back to
Cartesian coordinates using a two-dimensional 
trigonometric interpolant and compared with the 
solution computed in Cartesian 
coordinates. 
In both coordinate-adaptive simulations we 
found that the global $L^{\infty}$ error is bounded 
by $8 \times 10^{-4}$, 
suggesting that the coordinate transformation 
does not affect accuracy significantly. 
In Figure \ref{fig:vrtx_rank} we plot the 
solution rank and the rank of the of the right hand 
side of the PDE \eqref{prototype-transport-PDE0} 
versus time for all three tensor simulations.
We observe that in the coordinate-adaptive 
tensor ridge simulations the ranks of both 
the solution and the PDE right hand side are 
less than or equal to the corresponding 
ranks in Cartesian coordinates. 
We also observe that the adaptive simulation based on 
Algorithm \ref{alg:one_step_one_step} 
(denoted by ``tensor ridge 2'' in Figure \ref{fig:vrtx_rank}) 
has significantly smaller rank than the adaptive simulation 
based on Algorithm \ref{alg:coord_adaptive_integrator} 
(denoted by ``tensor ridge 1'' in Figure \ref{fig:vrtx_rank}).

\begin{figure}[!t]
	\centerline{\footnotesize\hspace{.3cm} (a) \hspace{5.8cm} (b) }
\centering
\includegraphics[scale=0.35]{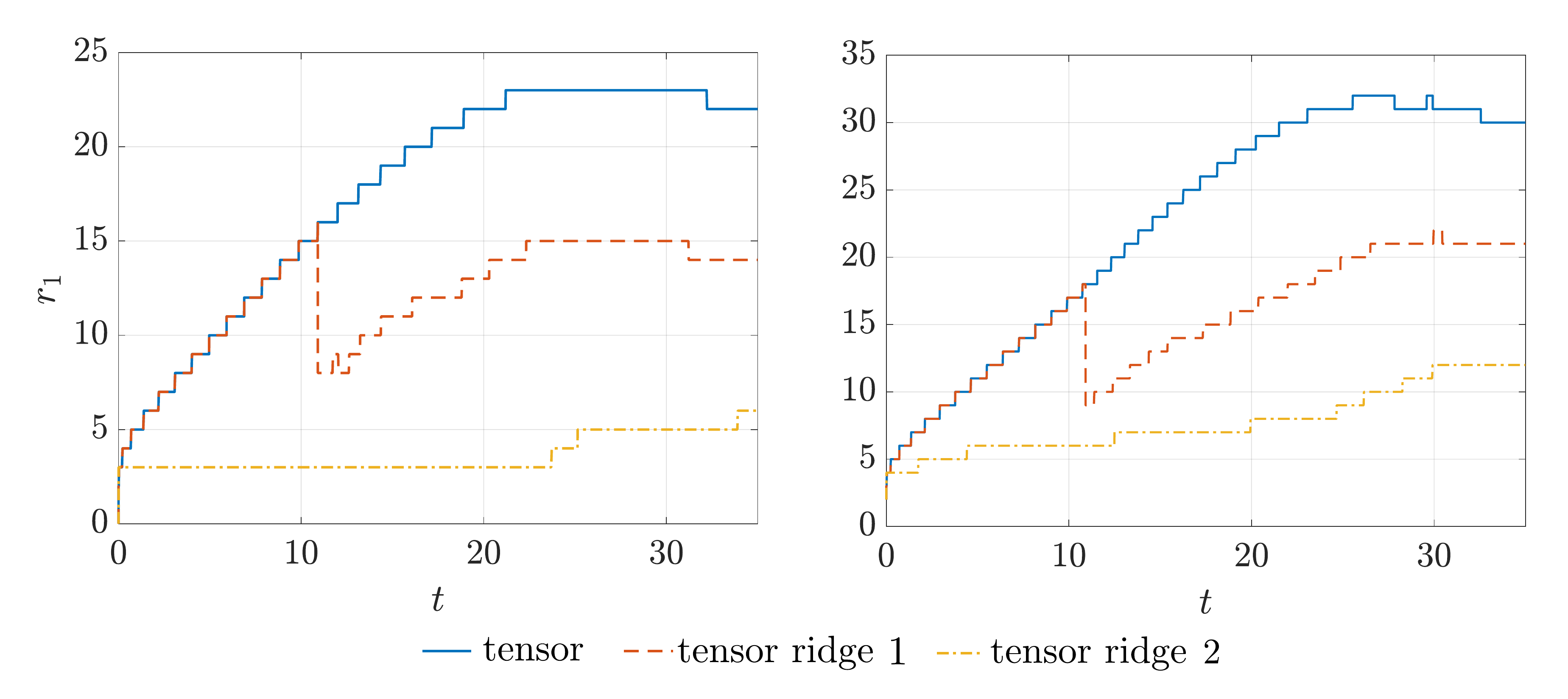}
\caption{  2D Linear advection equation
\eqref{prototype-transport-PDE0} with coefficients 
\eqref{vrtx_vf}. Rank (number of singular values larger than $\delta = 10^{-6}$) of the PDE solution (a) and the right hand side the PDE (b).
Tensor ridge 1 was computed using Algorithm \ref{alg:coord_adaptive_integrator} 
with a maximum solution rank threshold of $15$. 
Tensor ridge 2 was computed using 
Algorithm \ref{alg:one_step_one_step}, 
which performs coordinate corrections at 
each time step. 
}
\label{fig:vrtx_rank}
\end{figure}
}

\vs
{  
\vs
%\subsubsection{Linear advection equation 2}

%
Next, we demonstrate that linear coordinate transformations can 
be used to reduce the rank of a PDE operator and reduce the 
overall computational cost of temporal integration. 
To this end, consider again the two-dimensional 
advection equation \eqref{prototype-transport-PDE0} 
this time with advection coefficients 
\begin{equation}
\label{vf2}
\begin{aligned}
f_1(\bm x) &= \exp\left(-a_1 \left(\bm R_1 \cdot \bm x \right)^2\right)
\exp\left(-a_2\left(\bm R_2 \cdot \bm x\right)^2\right), 
\\ 
f_2(\bm x) &= \exp\left(-b_1 \left(\bm R_1 \cdot \bm x \right)^2\right)
\exp\left(-b_2\left(\bm R_2 \cdot \bm x\right)^2\right), 
\end{aligned}
\end{equation}
where $\bm R_i$ is the $i^{\text{th}}$ row 
of the matrix $\bm R$, 
\begin{equation}
\begin{aligned}
&\bm R = \begin{bmatrix}
\cos(\theta) & -\sin(\theta) \\
\sin(\theta) & \cos(\theta)
\end{bmatrix}. 
\end{aligned}
\end{equation}
We set parameters $\theta = \pi/4$, 
\begin{equation}
a_1 = 1/20, \quad a_2 = 1/10 , \quad b_1 = 1/10, \quad b_2 = 1/20, 
\end{equation}
and initial condition 
\begin{equation}
\label{advec_2_IC}
u_0(\bm x) = \exp\left(- x_1^2/3 \right) \exp\left(- x_2^2/3 \right).
\end{equation}
In Figure \ref{fig:vf}(b) we plot the vector field
$\bm f(\bm x)=(f_1(\bm x),f_2(\bm x))$ defined 
by \eqref{vf2}. 
Note that the initial condition $u_0(\bm x)$ is 
rank $1$. 
The rank of the linear advection operator 
defined on the right hand side 
of \eqref{prototype-transport-PDE0} depends 
on the FTT truncation tolerance used to compress 
the multivariate functions $f_1(\bm x)$ and $f_2(\bm x)$. 
If we choose the coordinate transformation 
$\bm y = \bm \Gamma \bm x$, where 
$\bm \Gamma = \bm R^{-1}$, then 
the initial condition remains rank 1, but the 
rank of the advection operator at the right hand side 
of \eqref{prototype-transport-PDE0} becomes 2, 
regardless of the FTT truncation tolerance used. 
\begin{figure}[!t]
	\centerline{\footnotesize\hspace{.2cm} (a) \hspace{6.cm} (b) }
\centering
\includegraphics[scale=0.33]{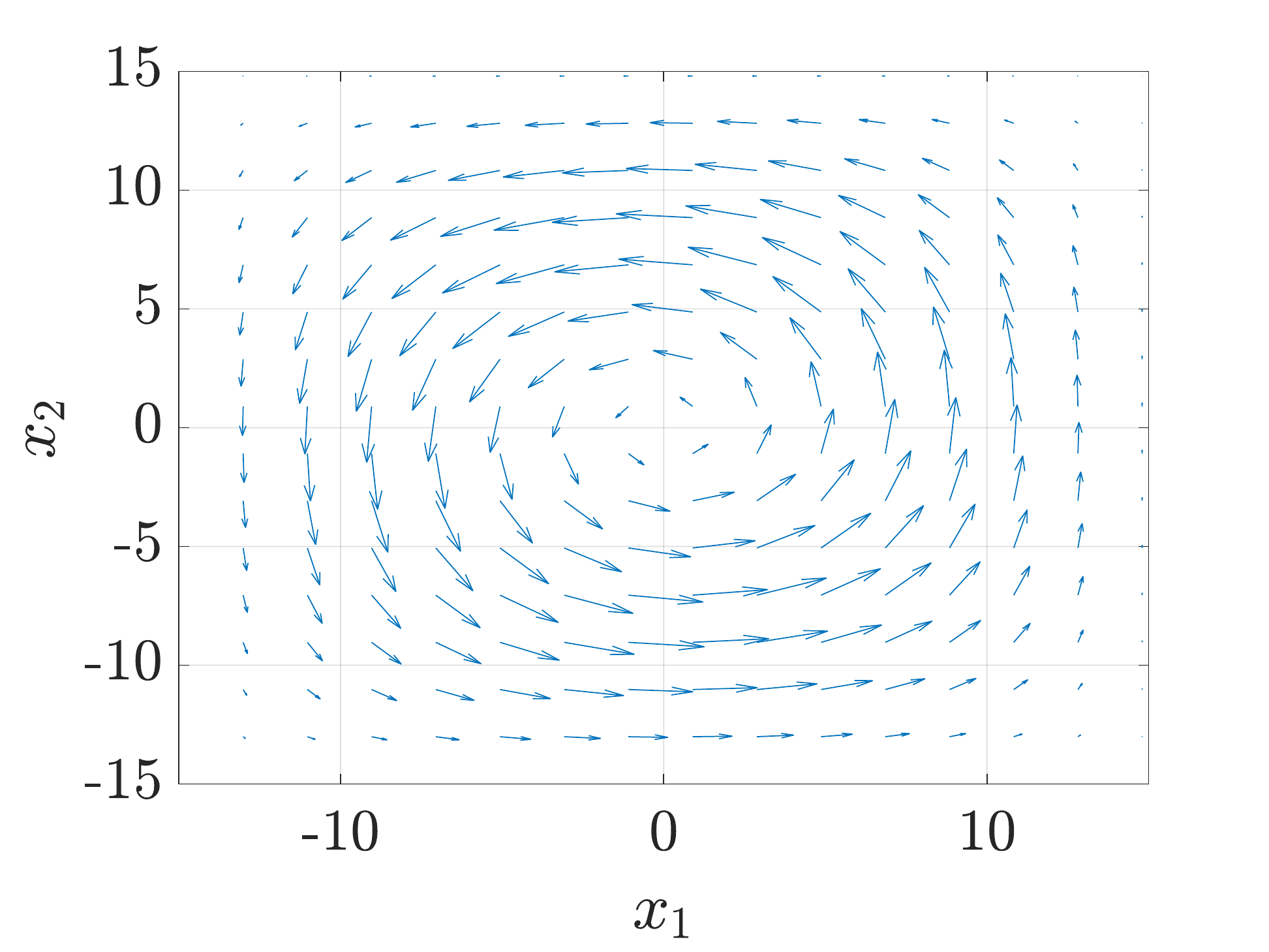} 
\includegraphics[scale=0.33]{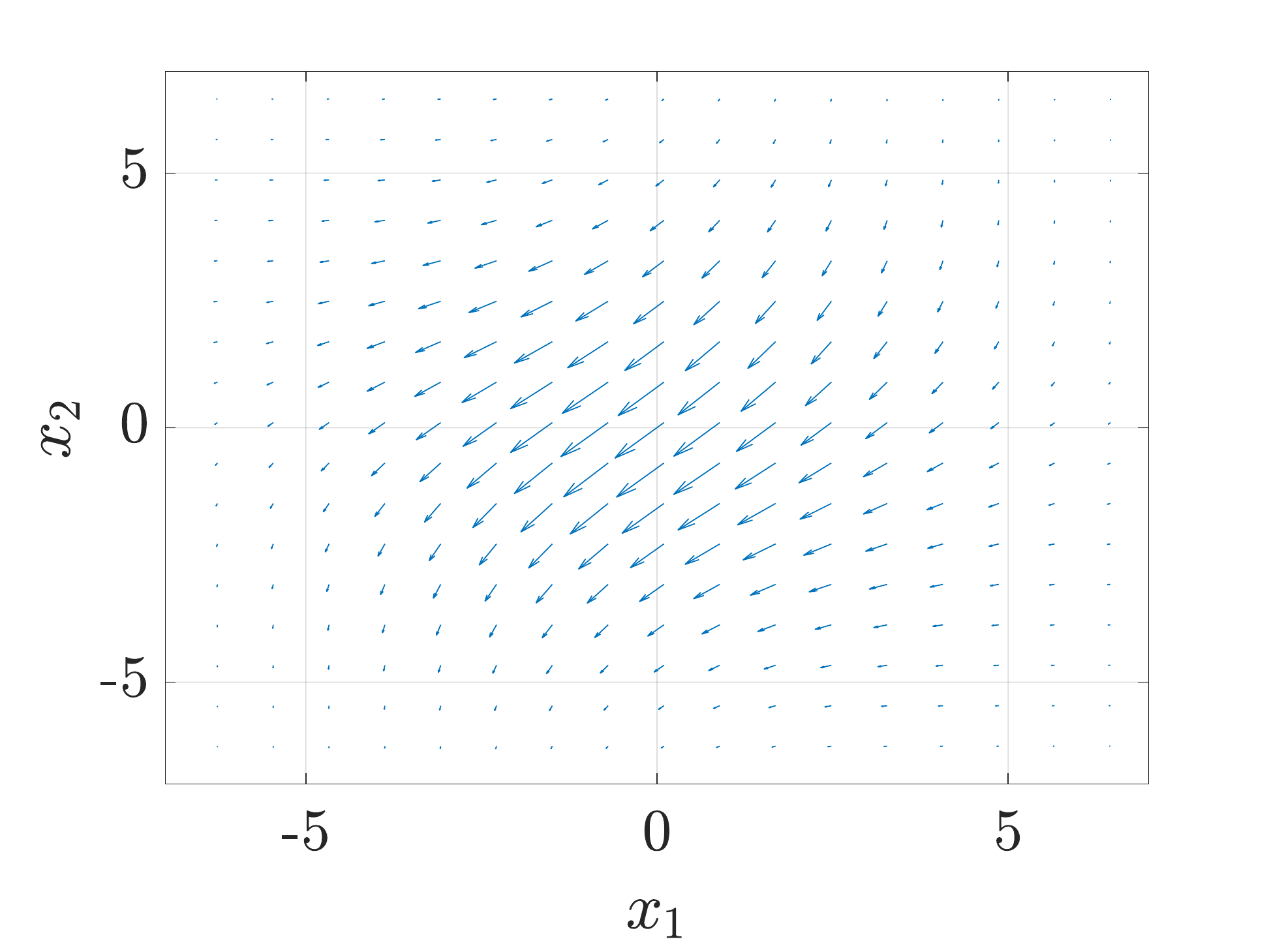} 
\caption{  (a) Vector fields 
used as coefficients in the two-dimensional linear advection 
equation \eqref{prototype-transport-PDE0}. 
The vector field defined in \eqref{vrtx_vf} is 
shown in (a) and the vector field defined in \eqref{vf2} 
is shown in (b).}
\label{fig:vf}
\end{figure}
%
%

%To solve this PDE numerically we 
%discretize the computational domain $[-15,15]^2$ 
%(which is large enough to contain the numerical 
%support of the PDE solution $u(\bm x, t)$ for 
%all $t \in [0,5]$) 
%using $150$ evenly spaced points in each 
%dimension. 
%
%Since the initial condition \eqref{advec_2_IC} is already in the 
%FTT format we obtain a discretized FTT 
%representation of the initial condition 
%by evaluating each tensor mode on a one-dimensional grid of 
%points.  
%
We ran two simulations of \eqref{prototype-transport-PDE0} 
with coefficients \eqref{vf2} using the 
step-truncation FTT integrator 
\eqref{step_truncation_scheme} based 
on Adams-Bashforth 3 with step-size $\Delta t = 10^{-3}$, 
truncation tolerance $\delta = 10^{-6}$, and final integration 
time $t=5$. 
%
%As before all spatial 
%derivatives and integrals are 
%computed by applying 
%one-dimensional pseudo-spectral 
%differentiation matrices and quadrature 
%rules \cite{spectral_methods_book} to 
%one dimensional tensor modes. 
%
%
In the first simulation we solved the PDE 
with a step-truncation 
tensor method in fixed 
Cartesian coordinates.
In the second simulation we solved the 
PDE in coordinates $\bm y = \bm R^{-1} \bm x$, 
using the same step-truncation
tensor method. 
In order to verify the accuracy of our FTT simulations we also 
computed a benchmark solution on a full tensor product grid 
in two dimensions. 
%
%We integrate the benchmark solution for time $t \in [0,5]$ 
%using Adams-Bashforth 3 with step size $\Delta t = 10^{-3}$. 
%
We then mapped the transformed solution back to
Cartesian coordinates at each time step 
and compared it with the benchmark solution. 
We found that the global $L^{\infty}$ error of both 
low-rank simulations is bounded by 
$8 \times 10^{-4}$. 
In Figure \ref{fig:advection_solutions} 
we plot the FTT solution in 
Cartesian coordinates and 
the FTT-ridge solution 
in low-rank coordinates at 
time $t=5$. 
We observe that the PDE operator expressed in Cartesian coordinates 
advects the solution at an angle relative to the 
underling coordinate system while the 
operator expressed in coordinates $\bm y = \bm R^{-1} \bm x$ 
advects the solution directly along a coordinate axis, 
hence the low rank dynamics.
In Figure \ref{fig:2d_pde_ranks}(a) 
we plot the solution ranks versus time. 
Note that even though the operator in Cartesian coordinates 
has significantly larger rank than the operator in 
low-rank coordinates, the solution ranks follow 
the same trend during temporal integration with 
the FTT-ridge rank only slightly smaller 
than the FTT rank.}

{ 
\paragraph{Computational cost}
%All simulations were run in 
%Matlab 2022a on a 2021 MacBook Pro 
%with M1 chip and 16GB RAM. 
The CPU-time 
of integrating the advection equation 
\eqref{prototype-transport-PDE0} with 
coefficients \eqref{vf2} 
from $t = 0$ to $t=5$ is 72 seconds when computed 
with FTT in Cartesian coordinates and 
31 seconds when computed with FTT-ridge 
in low-rank coordinates. 
Note that although the ranks of the low-rank 
solutions are similar at each time step 
(Figure \ref{fig:2d_pde_ranks}(a)), the computational 
speed-up of the FTT-ridge simulation is 
due to the operator rank, which is $2$ for FTT-ridge 
and $16$ for FTT in Cartesian coordinates.}
\begin{figure}[!t]
\centerline{\footnotesize\hspace{.0cm} (a)  \hspace{6.5cm} (b) }
	\centering
%	\rotatebox{90}{\hspace{2.5cm}\footnotesize Solution}
\includegraphics[scale=0.41]{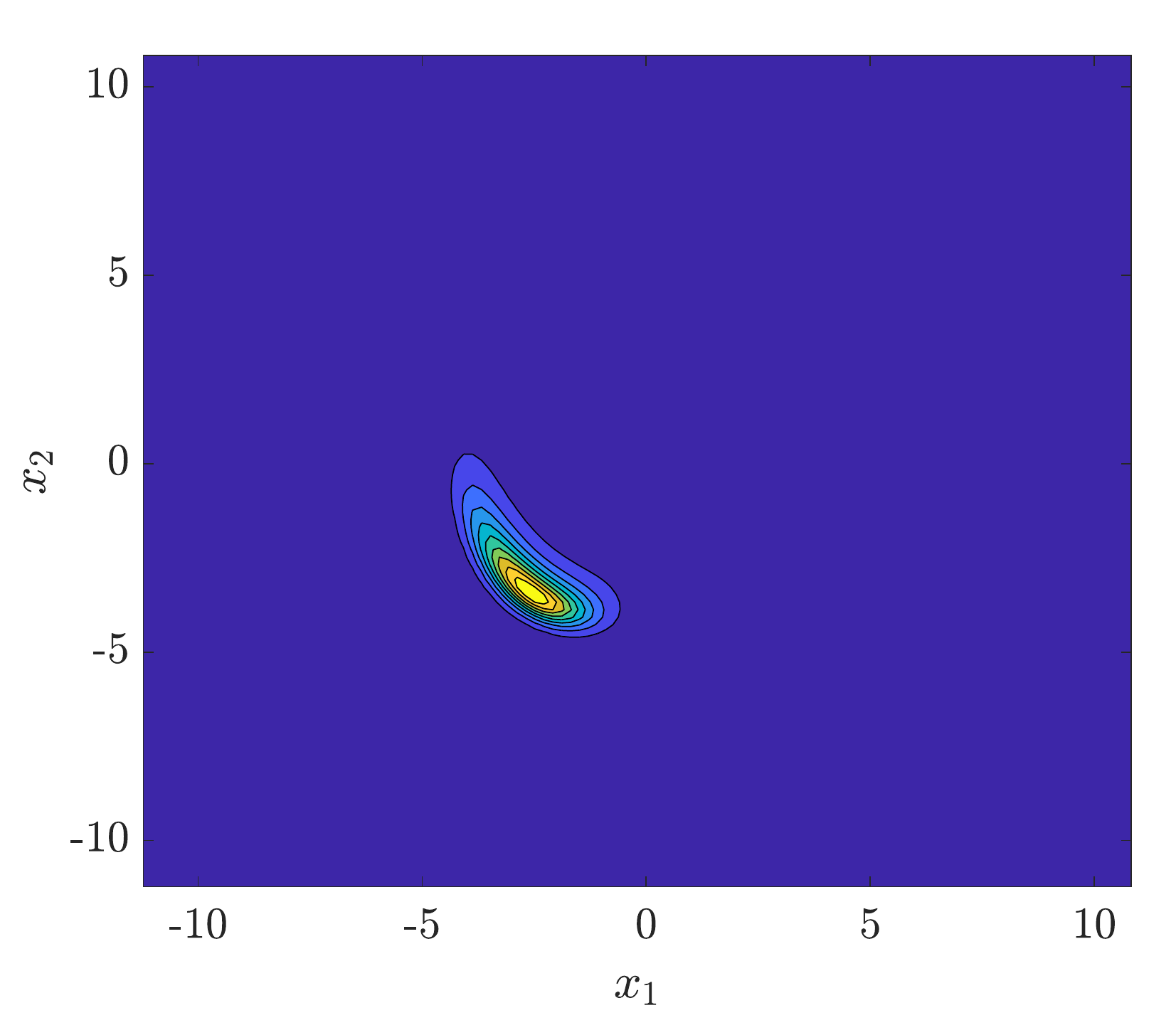} \hspace{-.2cm}
\includegraphics[scale=0.41]{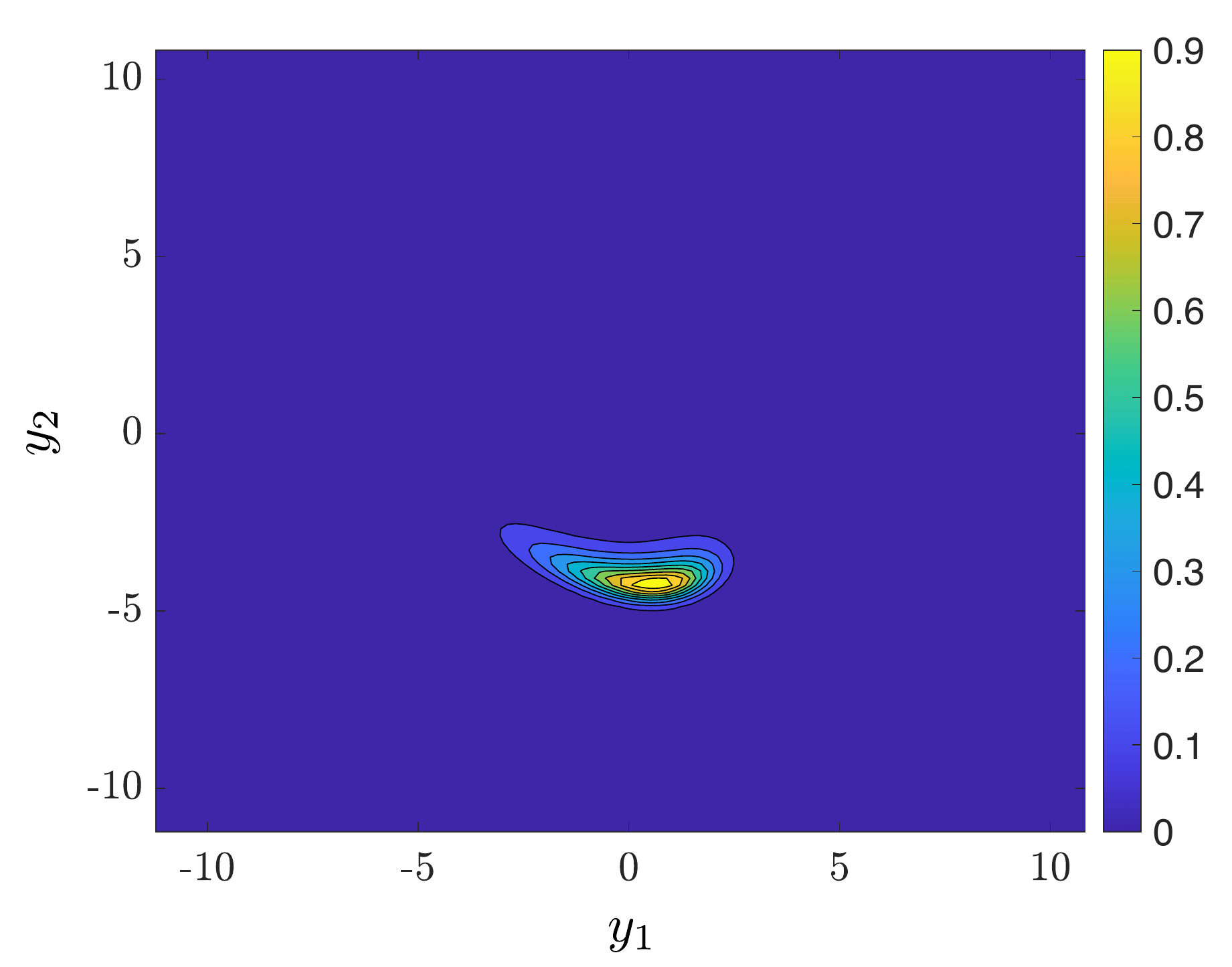} \\
\caption{  Solution to the two-dimensional advection equation \eqref{prototype-transport-PDE0} with coefficients \eqref{vf2} 
at time $t=5$. (a) FTT solution computed in Cartesian coordinates. 
(b) FTT-ridge computed low-rank coordinates.}
\label{fig:advection_solutions}
\end{figure}
{  
\subsection{Allen-Cahn equation}
\label{sec:Allen-Cahn}
Next we demonstrate coordinate-adaptive tensor 
integration on a simple nonlinear PDE. 
The Allen-Cahn eqaution is a reaction-diffusion PDE, which, in its simplest 
form includes a low-order polynomial non-linearity (reaction term) and 
a diffusion term \cite{Trefethen_2005} 
\begin{equation}
\label{Allen-Cahn}
\begin{cases}\displaystyle
\frac{\partial u(\bm x,t)}{\partial t} = \alpha \Delta u(\bm x,t) + u(\bm x,t) - u(\bm x,t)^3, \\
u(\bm x,0) = u_0(\bm x).
\end{cases}
\end{equation}
In two spatial dimensions the Laplacian in coordinates 
$\bm y = \bm \Gamma \bm x$ is given by 
%\begin{equation}
%\begin{aligned}
%\Delta_{\bm \Gamma} = 
%&\left(\Gamma_{11}^2 + \Gamma_{12}^2 + \Gamma_{13}^2 \right) \frac{\partial^2 }{\partial y_1^2} + 
%\left(\Gamma_{21}^2 + \Gamma_{22}^2 + \Gamma_{23}^2 \right) \frac{\partial^2 }{\partial y_2^2} + 
%\left(\Gamma_{31}^2 + \Gamma_{32}^2 + \Gamma_{33}^2 \right) \frac{\partial^2 }{\partial y_3^2} + \\
%&2\left(\Gamma_{11} \Gamma_{21} + \Gamma_{12} \Gamma_{22} + \Gamma_{13} \Gamma_{23}\right) \frac{\partial^2 }{\partial y_1 \partial y_2 }  
%+ 
%2\left(\Gamma_{11} \Gamma_{31} + \Gamma_{12} \Gamma_{32} + \Gamma_{13} \Gamma_{33}\right) \frac{\partial^2 }{\partial y_1 \partial y_3 }  
%+ \\
%&2\left(\Gamma_{21} \Gamma_{31} + \Gamma_{22} \Gamma_{32} + \Gamma_{23} \Gamma_{33}\right) \frac{\partial^2 }{\partial y_2 \partial y_3}
%\end{aligned}
%\end{equation}
%
\begin{equation}
\label{generalized_laplacian}
\begin{aligned}
\Delta_{\bm \Gamma} = 
&\left(\Gamma_{11}^2 + \Gamma_{12}^2 \right) \frac{\partial^2 }{\partial y_1^2} + 
\left(\Gamma_{21}^2 + \Gamma_{22}^2 \right) \frac{\partial^2 }{\partial y_2^2} +
2\left(\Gamma_{11} \Gamma_{21} + \Gamma_{12} \Gamma_{22} \right) \frac{\partial^2 }{\partial y_1 \partial y_2 }  
\end{aligned}
\end{equation}
which allows us to write the nonlinear PDE \eqref{Allen-Cahn} in 
the coordinate system $\bm y = \bm \Gamma \bm x$ 
with only a small increase in the rank of the 
Laplacian operator\footnote{
  In general a 
$d$-dimensional Laplacian 
$\Delta$ is a rank-$d$ operator and 
the corresponding operator 
$\Delta_{\bm \Gamma}$ in (linearly) 
transformed coordinates is rank $(d^2+d)/2$.}. 
The FTT rank of the cubic term appearing in the 
PDE operator of the Allen-Cahn equation 
\eqref{Allen-Cahn} 
is determined by the rank of the 
FTT solution. 
Standard algorithms for multiplying 
two FTT tensors $u_\T$ and $v_\T$ 
with ranks $\bm r_1$ and $\bm r_2$ 
results in a FTT tensor 
with (non-optimal) rank equal 
to the Hadamard product of the two 
ranks $\bm r_1 \circ \bm r_2$. 
Hence, by reducing the solution rank 
with a coordinate transformation, we can 
reduce the computational cost of computing 
the nonlinear term in \eqref{Allen-Cahn}. 
We set the diffusion coefficient $\alpha=0.2$ 
and the initial condition $u_0(\bm x)$ as 
the rotated Gaussian from equation 
\eqref{rotated_gaussian} with  $\epsilon = \pi/3$. 
%

%To solve the PDE numerically we discretize the 
%computational domain $[-25,25]^2$ 
%(which is large enough to include the numerical 
%support of the PDE solution $u(\bm x, t)$ for 
%all $t \in [0,5]$) using 
%$150$ evenly spaced points in each dimension. 
%%
%All spatial derivatives and integrals are 
%computed by applying one-dimensional pseudo-spectral 
%differentiation matrices and quadrature rules \cite{spectral_methods_book}. 
%
We ran two FTT simulations of \eqref{Allen-Cahn} 
for time $t \in [0,5]$.
%both using the 
%step-truncation FTT integrator \eqref{step_truncation_scheme} based 
%on Adams-Bashforth 2 with step-size $\Delta t = 10^{-3}$ and 
%truncation tolerance $10^{-7}$. 
%
The first simulation was computed in fixed Cartesian 
coordinates. 
%%
%We obtain the FTT decomposition of the initial condition 
%\eqref{rotated_gaussian} by first discretizing on a 
%two-dimensional tensor product grid and then computing 
%the FTT decomposition with truncation tolerance $10^{-7}$. 
%
In the second simulation we used the coordinate transformation 
$\bm \Phi(\pi/3)^{-1}$ (which in this case we have available 
analytically) to transform the initial condition 
into a rank 1 FTT-ridge function. 
We integrated the rank 1 FTT-ridge initial condition forward 
in time using the corresponding transformed PDE, i.e., using the transformed Laplacian \eqref{generalized_laplacian}. 
In order to verify the accuracy of our low-rank simulations we 
also computed a benchmark solution on a full tensor product grid 
in two dimensions and mapped the transformed solution back to
Cartesian coordinates at each time step. 
%
%We use $150$ evenly-spaced points in each dimension 
%($22500$ points total) and compute all derivatives using 
%Fourier pseudo-spectral methods with periodic boundary conditions. 
%
%We integrate the benchmark solution in time using Adams-Bashforth 2 
%with step size $\Delta t = 10^{-3}$.  
%
%
We found that the global $L^{\infty}$ error of both low-rank 
solutions is bounded $5\times 10^{-5}$. 
In Figure \ref{fig:AC_solutions} 
we plot the FTT solution in 
Cartesian coordinates 
and the FTT-ridge solution in 
low-rank coordinates at 
time $t=5$. 
We observe that the profile of 
Gaussian functions are preserved 
as the solution moves from the unstable 
equilibrium at $u=0$ to the stable equilibrium 
at $u=1$. 
\begin{figure}[!t]
\centerline{\footnotesize\hspace{.0cm} (a) \hspace{6.6cm} (b)}
	\centering
%	\rotatebox{90}{\hspace{2.5cm}\footnotesize Solution}
\includegraphics[scale=0.41]{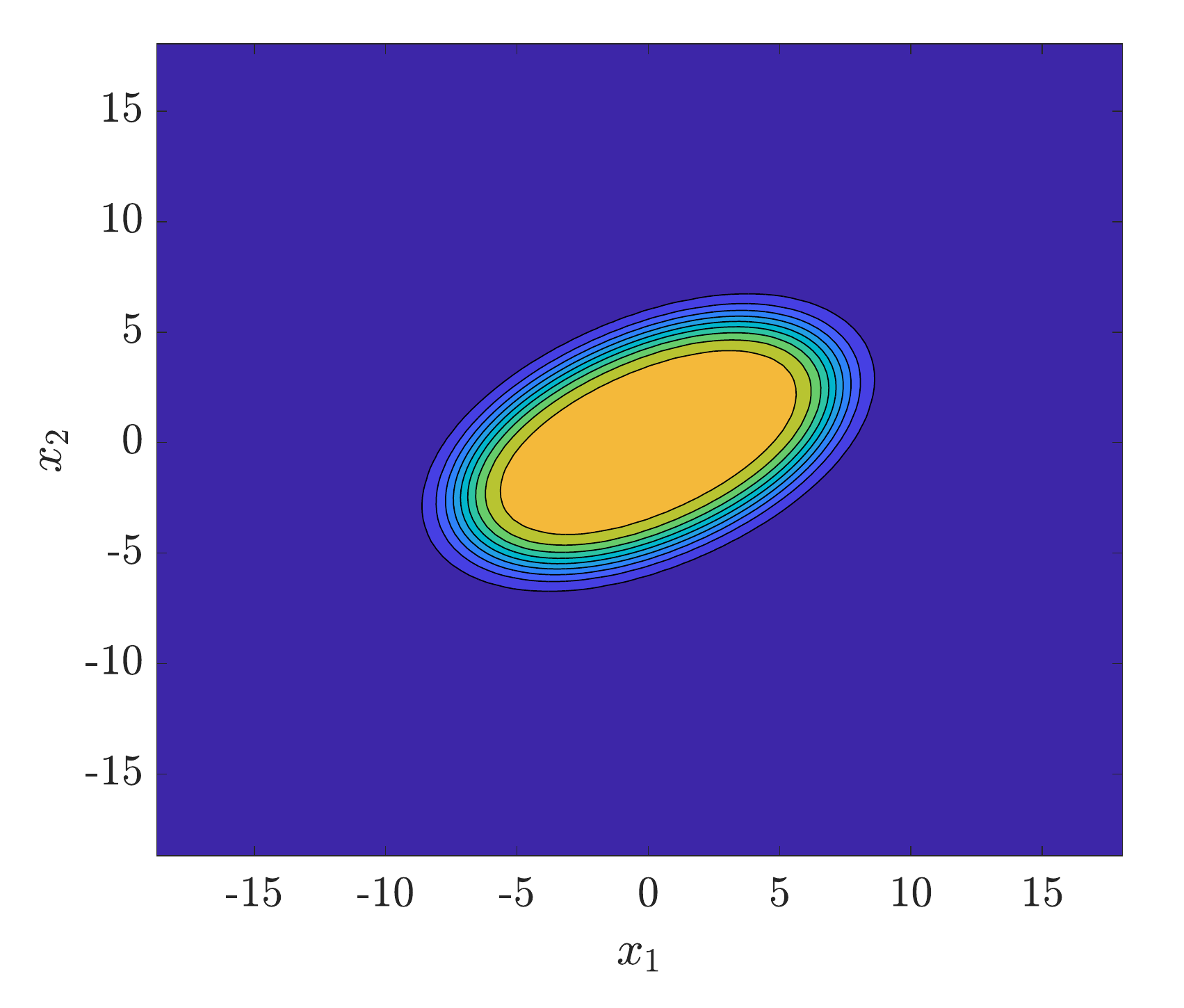} \hspace{-.2cm}
\includegraphics[scale=0.41]{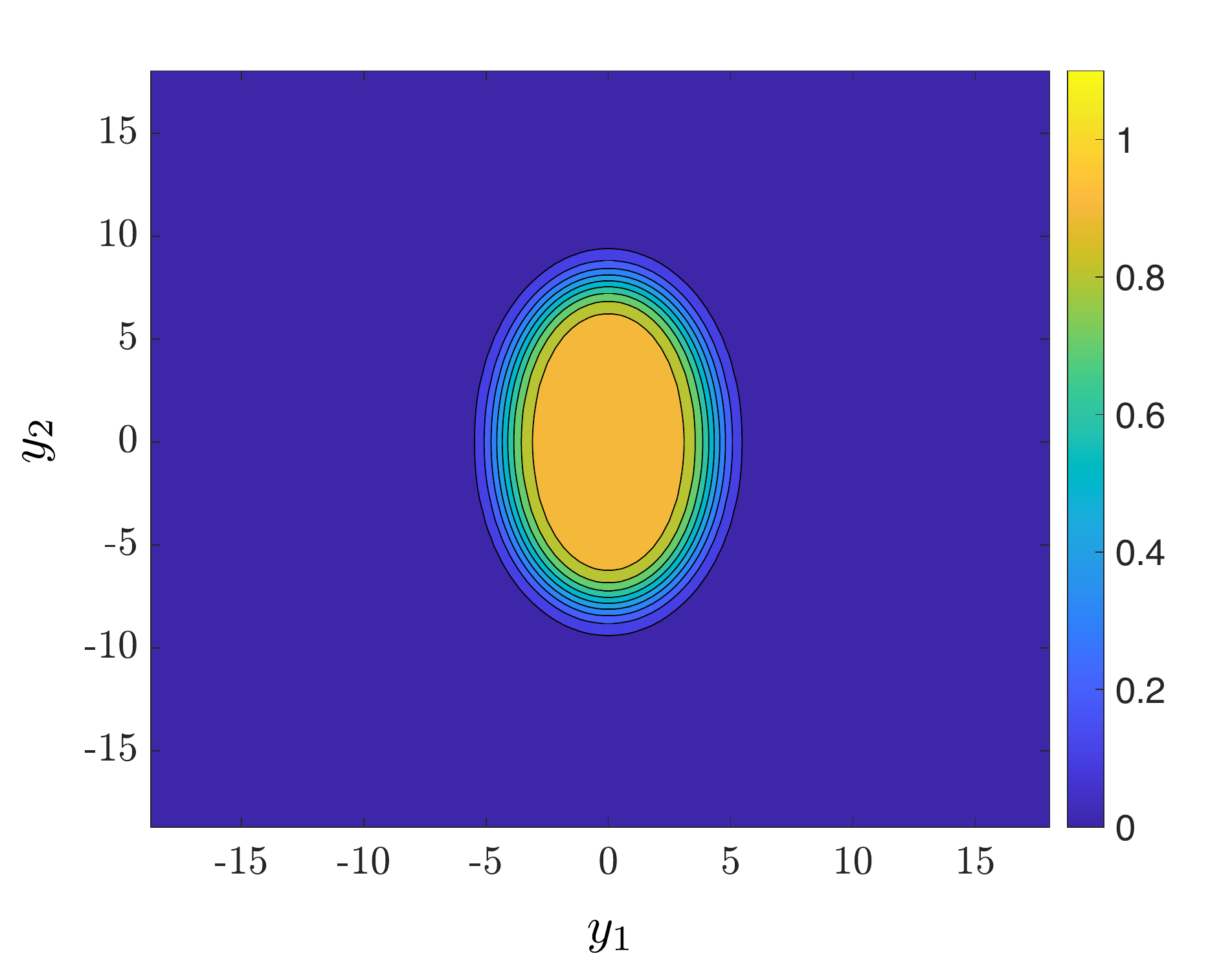} 
\caption{ 
Solution to the two-dimensional Allen-Cahn equation \eqref{Allen-Cahn} 
at time $t=5$. (a) FTT solution computed in Cartesian coordinates and (b) FTT-ridge solution computed in low-rank coordinates.}
\label{fig:AC_solutions}
\end{figure}
In Figure \ref{fig:2d_pde_ranks}(b) 
we plot the solution ranks versus time. 
We observe that the FTT solution rank is larger 
than the FTT-ridge solution rank at each time. 
}
{ 
\paragraph{Computational cost}
The CPU-time of integrating the Allen-Cahn equation 
\eqref{Allen-Cahn} from $t = 0$ to $t=5$ is 279 seconds 
when computed using FTT in Cartesian coordinates and 
72 seconds when computed using FTT-ridge in low-rank 
coordinates. 
The optimal coordinate transformation at time $t=0$ is known 
analytically so we do not need to compute it, thus 
these computational timings only include the temporal integration, 
and do not account for any computational time of changing the 
coordinate system. 
A significant amount of computational time in 
computing the FTT solutions comes from computing the cubic 
nonlinearity appearing in the Allen-Cahn equation at each 
time step. 
The lower rank FTT-ridge solution allows for this term to 
be computed significantly faster than the FTT solution 
in Cartesian coordinates. 
}

{ 
\subsection{3D and 5D linear advection equations} 
We also applied the rank-reducing coordinate-adaptive FTT integrators 
to the advection equation 
\begin{equation}
\label{Fokker_Planck}
\begin{cases}
\displaystyle\frac{\partial u(\bm x,t)}{\partial t} = 
\bm f(\bm x) \cdot \nabla u(\bm x,t),\vs  \\
u(\bm x,0) = u_0(\bm x),
\end{cases}
\end{equation}
in dimensions three and five with initial condition
$u_0(\bm x)$ defined as a Gaussian mixture 
\begin{equation}
\label{PDF_IC}
u_0(\bm x) = \frac{1}{m}
\sum_{i=1}^{N_g}\exp\left(- \sum_{j=1}^d 
\frac{1}{\beta_{ij}} \left( \bm R^{(i)}_{j} \cdot \bm x + t_{ij} \right)^2 \right) , 
\end{equation}}
where $\bm R_j{ ^{(i)}}$ is the $j$-th row of a $d\times d$ rotation 
matrix $\bm R{ ^{(i)}}$, $\beta_i \geq 0$, {  $t_{ij}$ are 
translations} and $m$ is the normalization factor 
{ 
\begin{equation}
\nonumber
m = \left\| \sum_{i=1}^{N_g}\exp\left(- \sum_{j=1}^d 
\frac{1}{\beta_{ij}} \left( \bm R^{(i)}_{j} \cdot \bm x + t_{ij} \right)^2 \right) \right\|_{L^1\left(\mathbb{R}^d\right)}.
\end{equation}}

\begin{figure}[!t]
\centerline{\footnotesize\hspace{.75cm} (a)  \hspace{8.4cm} (b) }
	\centering
\includegraphics[scale=.45]{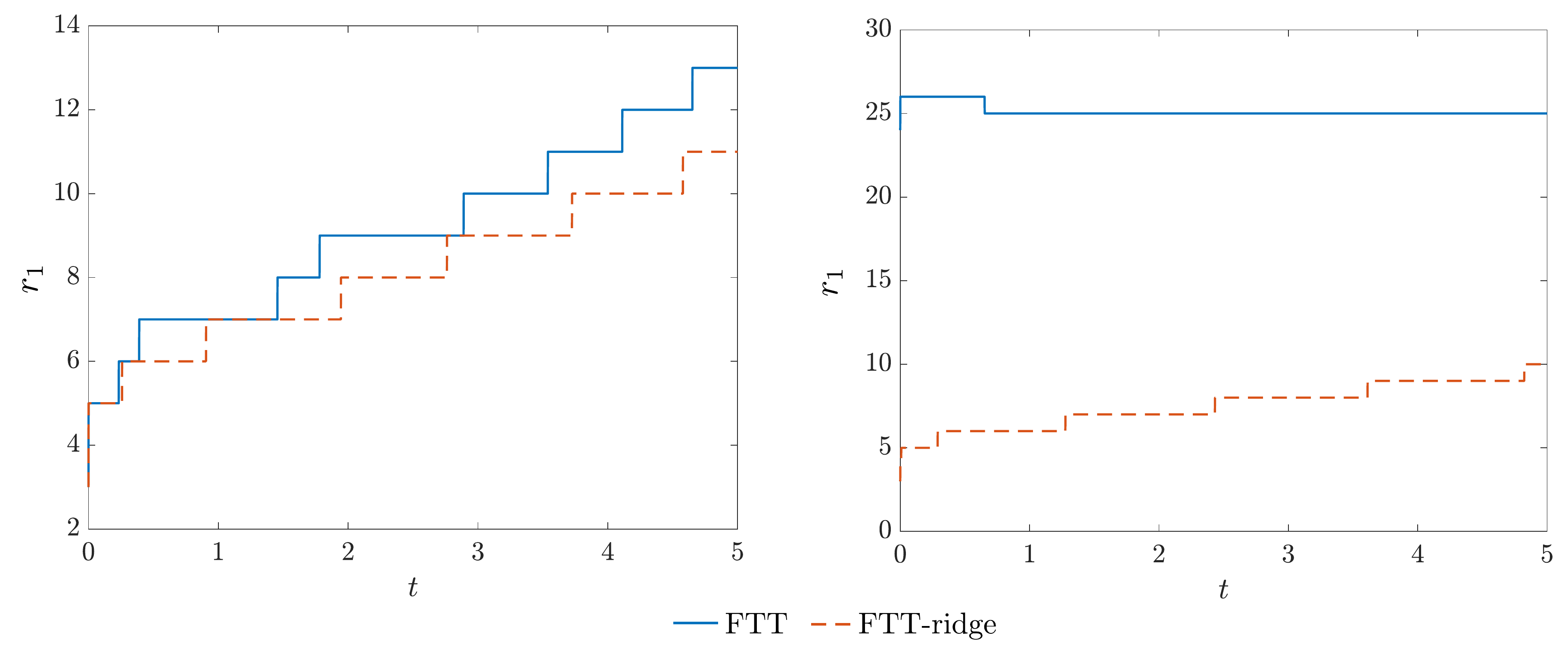} 
\caption{  (a) Rank versus time for the FTT and FTT-ridge solutions to the advection PDE \eqref{prototype-transport-PDE0} with coefficients \eqref{vf2}. 
(b) Rank versus time for the FTT and FTT-ridge solutions to the 
Allen-Cahn equation \eqref{Allen-Cahn}.}
\label{fig:2d_pde_ranks}
\end{figure}
\subsubsection{Three-dimensional simulation results}
First, we consider three spatial dimensions 
{  ($d=3$)}, and 
set the coefficients in \eqref{Fokker_Planck} as 
\begin{equation}
\label{drift_diffusion}
{  \bm f(\bm x)} = 
-\frac{1}{6} \begin{bmatrix}
2\sin(x_2) \\
3\cos(x_3) \\
3 x_1 \\
\end{bmatrix}, 
\end{equation}
resulting in the linear operator 
\begin{figure}[!t]
\centerline{\footnotesize\hspace{.85cm} $t = 0$ \hspace{1.65cm}    \hspace{5.5cm} $t = 1$ }
	\centering
	\rotatebox{90}{\hspace{1.5cm}\footnotesize FTT (Cartesian coordinates)}
\includegraphics[scale=0.41]{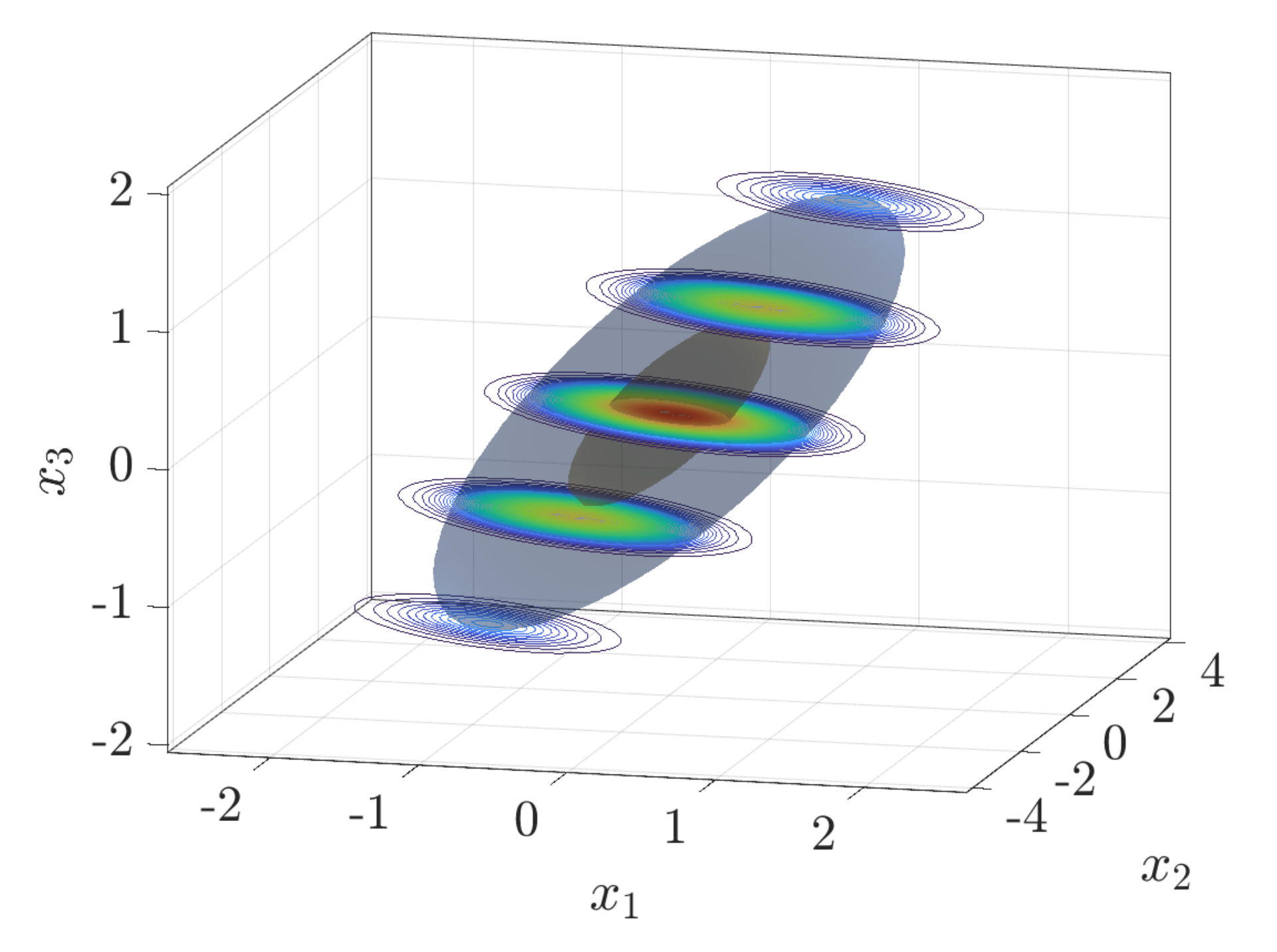} \hspace{-.2cm}
\includegraphics[scale=0.41]{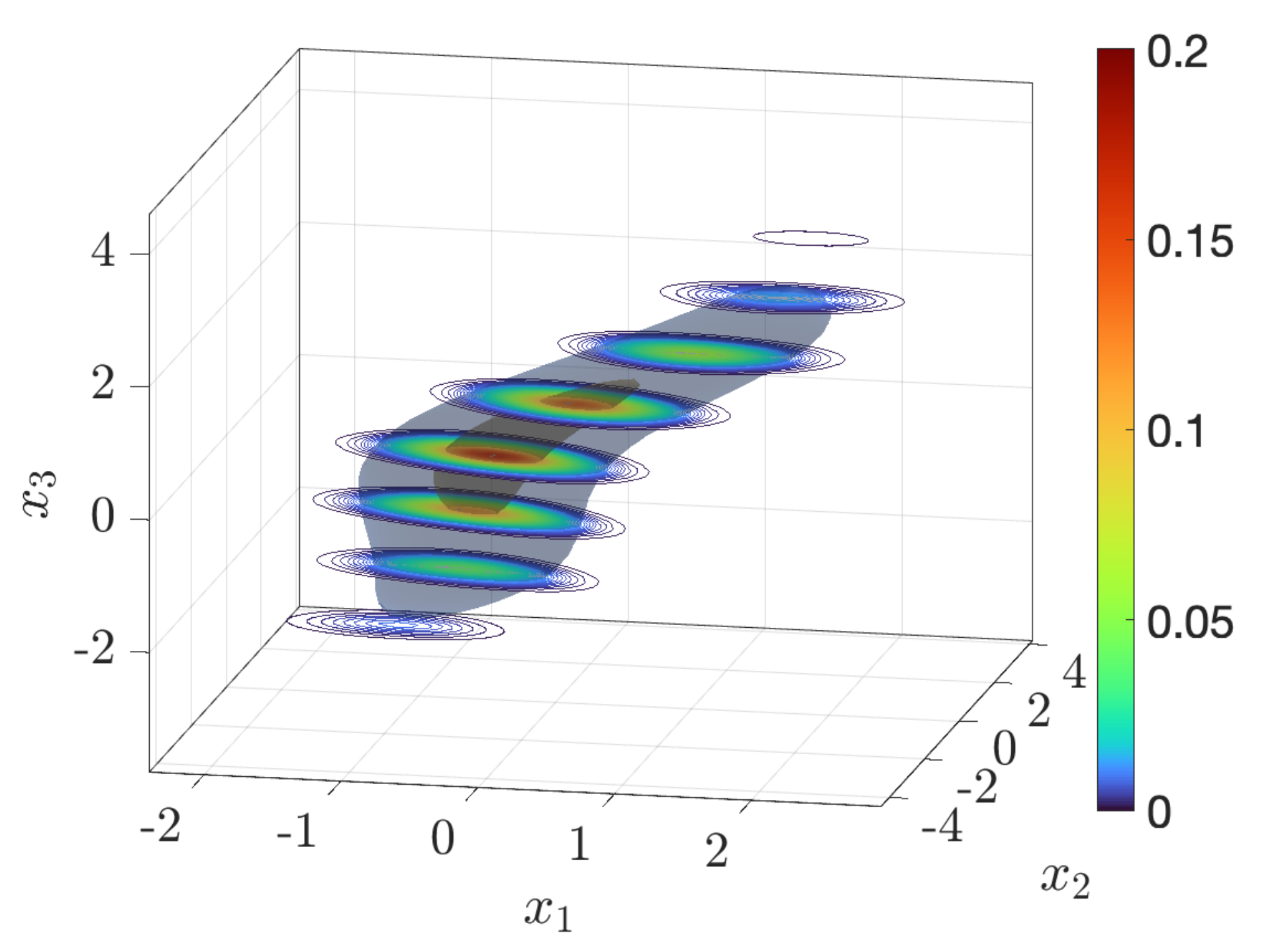} \\
	\centering
	\rotatebox{90}{\hspace{1cm}\footnotesize FTT-ridge (low-rank  coordinates)}
\includegraphics[scale=0.41]{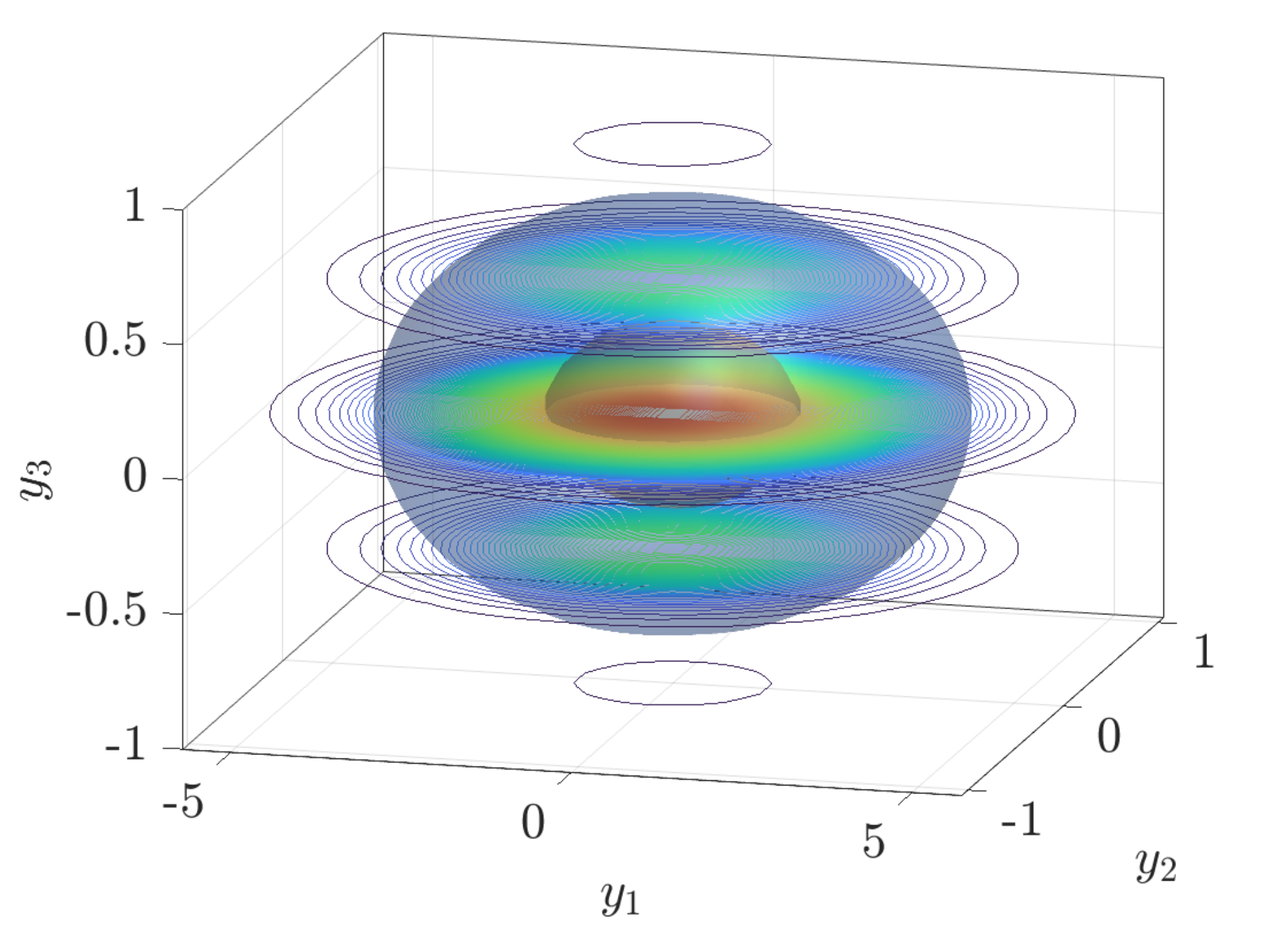} \hspace{-.3cm}
\includegraphics[scale=0.41]{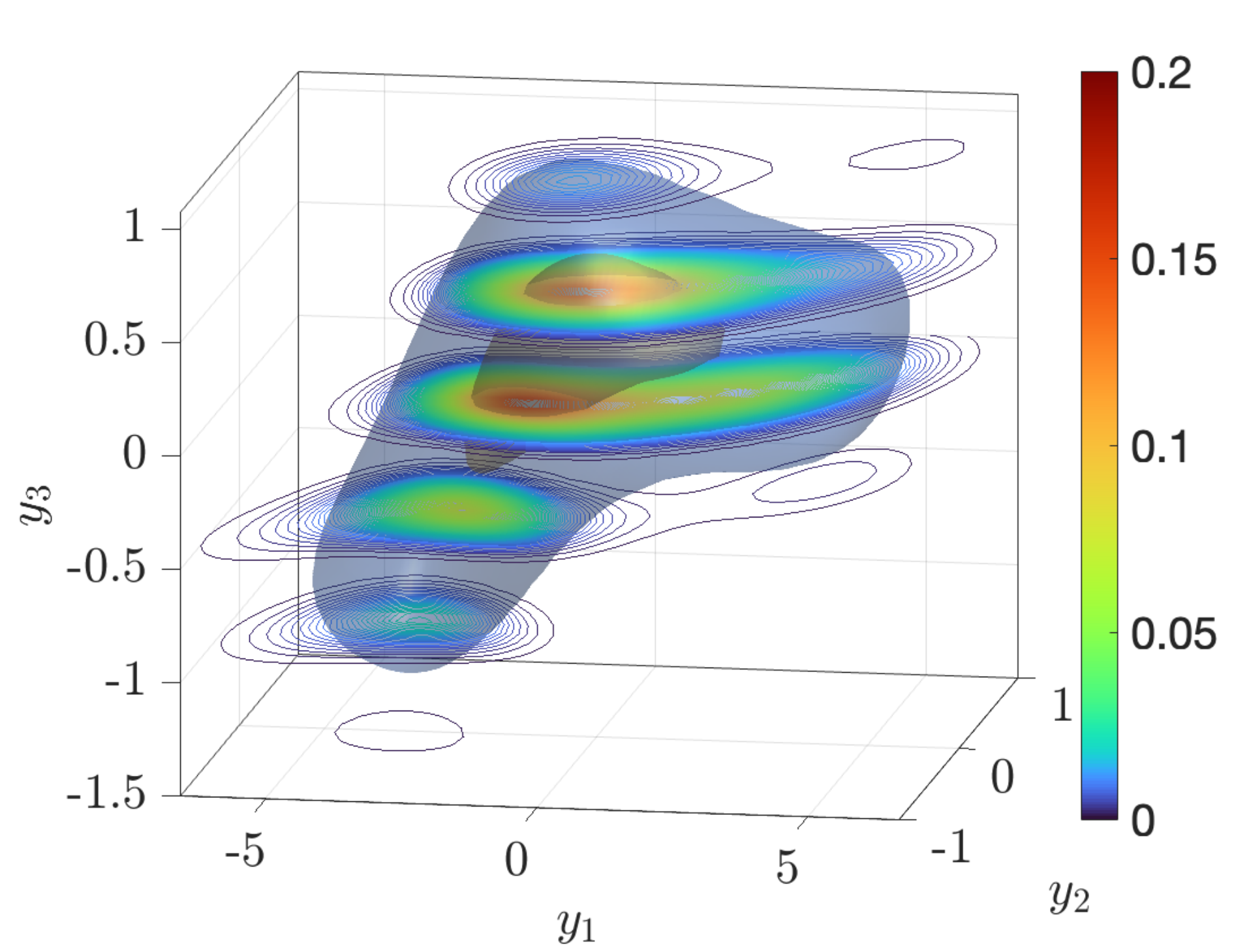} \\
\caption{Volumetric plot of the FTT solutions to the 
{  3D advection equation} \eqref{Fokker_Planck} at time 
$t = 0$ (left column) in Cartesian coordinates (top) and 
low-rank coordinates (bottom), and at time $t=1$ (right column) in Cartesian coordinates (top) and low-rank coordinates (bottom).}
\label{fig:PDE_countours}
\end{figure}
\begin{equation}
\label{Fokker_Planck_op}
\begin{aligned}
{  \bm f(\bm x) \cdot \nabla } = \frac{\sin(x_2)}{3} \frac{\partial }{\partial x_1} 
+ \frac{\cos(x_3)}{2} \frac{\partial }{\partial x_2} + \frac{x_1}{2} \frac{\partial }{\partial x_3}.
\end{aligned}
\end{equation}
In {  a} previous work 
\cite{Dektor_2020} we have demonstrated that 
variable coefficient advection problems with operators of 
the form \eqref{Fokker_Planck_op} 
can have solutions with multilinear rank that 
grows significantly over time. 
We set the parameters in the 
initial condition \eqref{PDF_IC} 
{  $N_g = 1$, }
\begin{equation}
\label{rotation_3D}
\bm R^{  (1)} = \exp\left(\frac{1}{28} \begin{bmatrix}
0 & 7 \pi & 4 \pi \\
-7 \pi  & 0 & 7 \pi \\
  -4 \pi & -7 \pi & 0
\end{bmatrix} \right), 
\qquad
\bm \beta = \begin{bmatrix}
  3 &  {1}/{10}  &  3 
\end{bmatrix}, 
\end{equation}
{  and $t_{ij} = 0$ for all $i,j$.} 
\begin{figure}[!t]
\centerline{\footnotesize\hspace{.5cm} (a) \hspace{5cm} (b) \hspace{5cm} (c) }
	\centering
\includegraphics[scale=0.29]{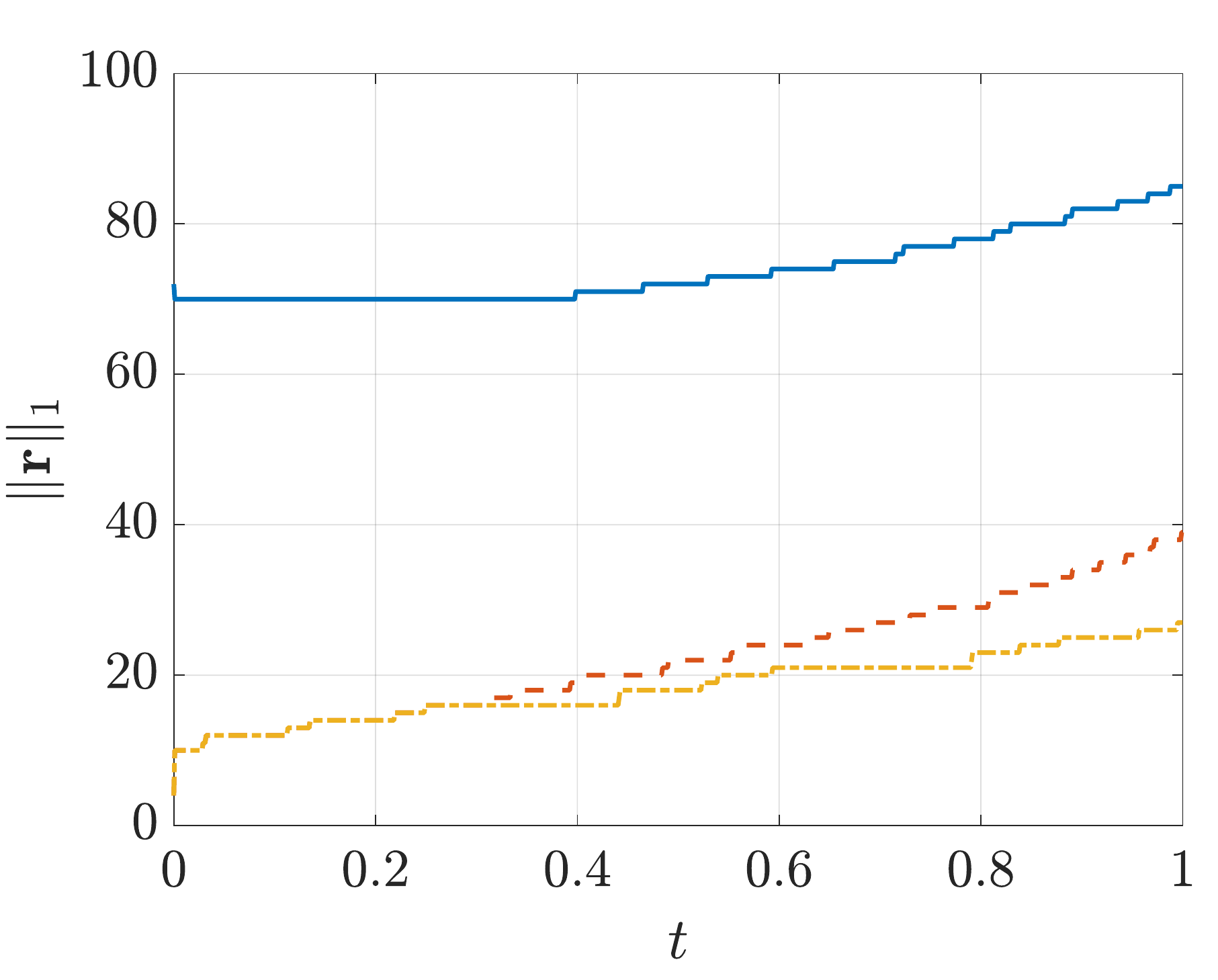} 
\includegraphics[scale=0.29]{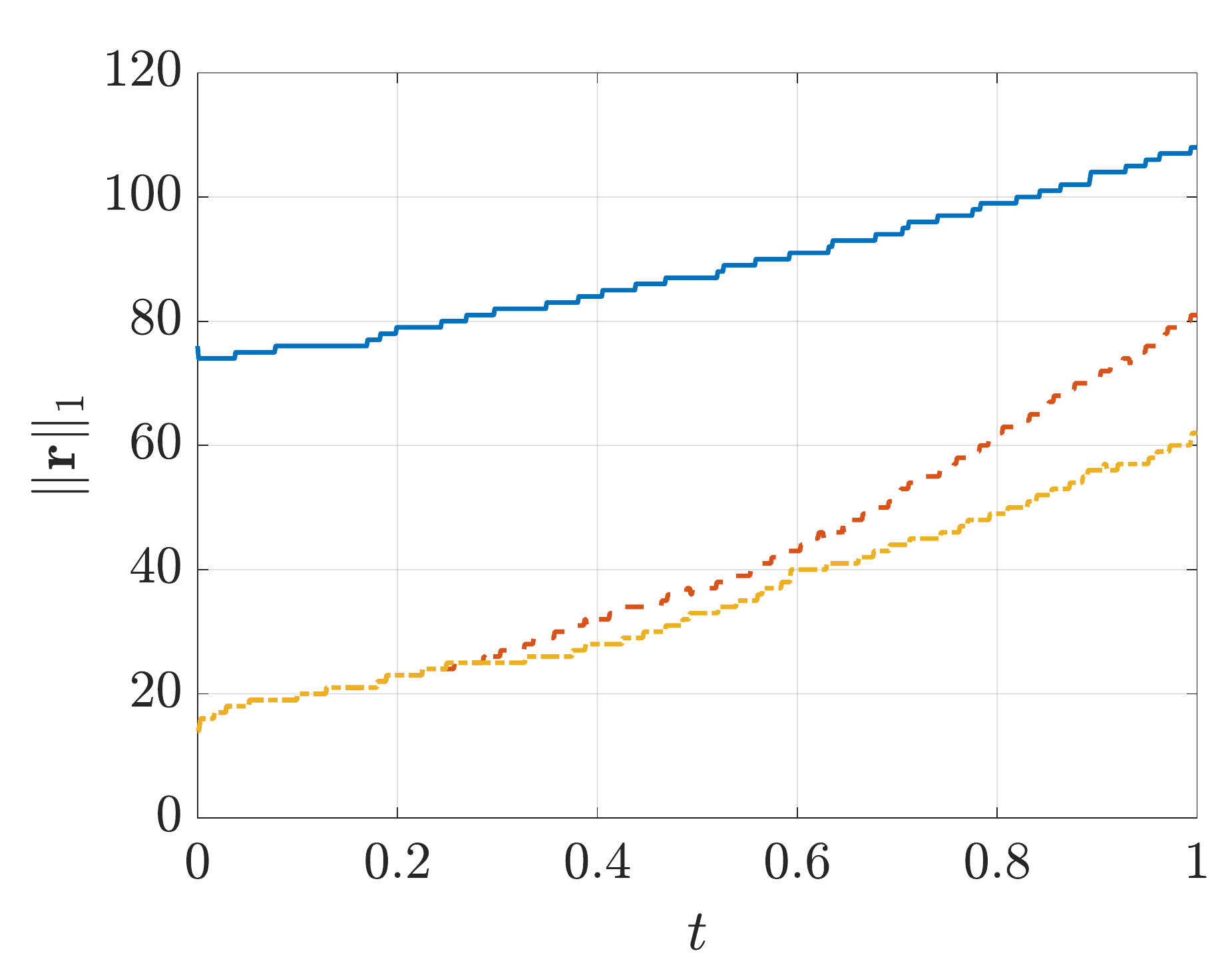} 
\includegraphics[scale=0.29]{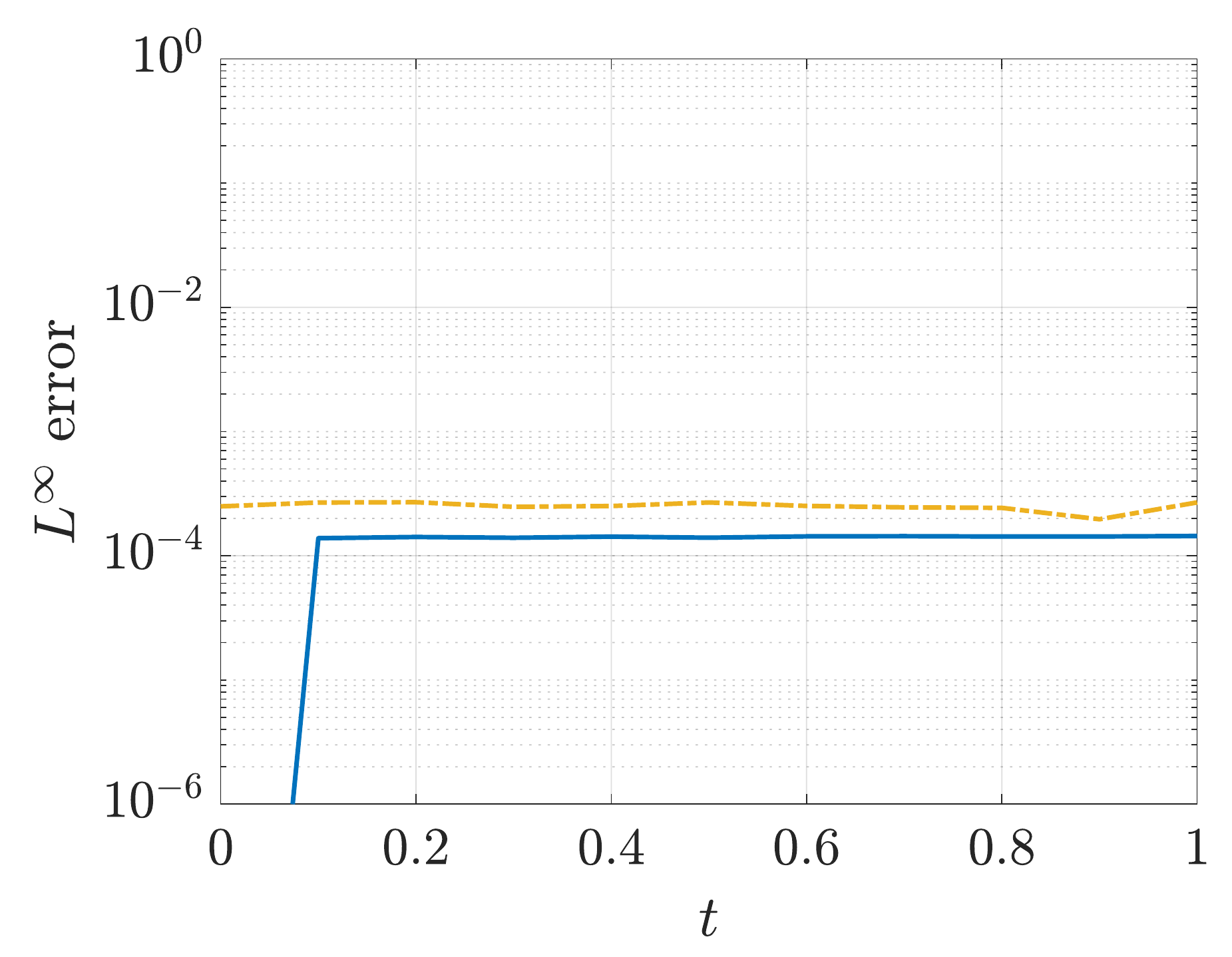} \\
\vspace{0cm}
\hspace{.3cm}
\includegraphics[scale=0.35]{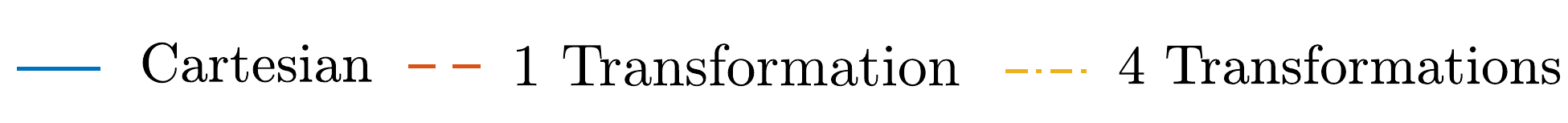} 
\caption{  3D advection 
equation \eqref{Fokker_Planck}: 
Multilinear rank of the PDE solution (a) and PDE 
right-hand-side (b) in Cartesian coordinates 
and transformed coordinates versus time. 
In (c) we plot the $L^{\infty}$ error of the FTT solutions relative 
to a benchmark solution.}
\label{fig:PDE_rank_vs_time}
\end{figure}

%We integrate the PDE \eqref{Fokker_Planck} up to 
%time $t = 1$ using the Adams-Bashforth (AB2) rank-adaptive 
%step-truncation FTT integrator \eqref{AB2_ST} 
%with step-size $\Delta t= 10^{-3}$ {  and truncation 
%tolerance $\delta = 10^{-5}$}. 
%
%We discretize the spatial domain $[-15,15]^3$ 
%(which is large enough to contain the numerical 
%support of the PDE solution $u(\bm x, t)$ for 
%all $t \in [0,1]$) with 
%{  $150$} evenly-spaced points in each variable, 
%and discretize spatial 
%derivatives and integrals via
%one-dimensional pseudo-spectral differentiation 
%matrices and quadrature rules \cite{spectral_methods_book}.
%
%All FTT computations are performed with the 
%Matlab ``TT-Toolbox'' \cite{TT-toolbox}.
%
We ran three FTT simulations up to 
time $t = 1$. 
The first simulation is computed 
in fixed Cartesian coordinates. 
We then tested the FTT integrator with rank-reducing coordinate 
transformation in two different simulation settings. 
{  In the first one, we performed a 
coordinate transformation only at time $t=0$.}
Such a coordinate transformation is not done using 
the Riemannian gradient descent algorithm, since, 
in this case we have the optimal coordinate 
transformation available analytically and we can 
simply evaluate the FTT tensor on the 
low rank coordinates. 
{ 
In the second simulation we 
also performed a coordinate transformation 
at time $t=0$ 
(once again the coordinate transformation 
at time $t=0$ is not done using 
the Riemannian gradient descent algorithm) 
and then used the coordinate-adaptive integration Algorithm 
\ref{alg:coord_adaptive_integrator} with 
${\rm max\ rank} = 15$ and $k_r = 5$. 
With these parameters the coordinate-adaptive 
algorithm triggers three additional coordinate 
transformations at times $t \in\{0.25,0.59,0.9\}$ 
that are computed using 
the Riemannian gradient 
descent algorithm \ref{alg:gradient_descent} 
with step size $\Delta \epsilon = 10^{-4}$ and 
stopping tolerance ${  \eta} = 10^{-1}$. 
In Figure \ref{fig:PDE_spectra_final_time}(c) 
we plot the absolute value of the derivative of the 
cost function $C(\bm \Gamma(\epsilon))$ 
versus $\epsilon$ for {  the instances of 
gradient descent at times $t > 0$. }
We observe that the rate of change of the cost function 
becomes smaller as we iterate the gradient 
descent routine, i.e., the cost function is decreasing 
less per iteration after several iterations. 
This indicates that the cost function is approaching 
a flatter region and our gradient descent method 
is becoming less effective for reducing the cost function.
In Figure \ref{fig:PDE_rank_vs_time}(a) we plot the 
$1$-norm of the FTT solution rank vector versus 
time and in Figure \ref{fig:PDE_rank_vs_time}(b) 
we plot the $1$-norm of the FTT solution velocity 
(i.e., the PDE right hand side) for each FTT simulation. 
We observe that the FTT-ridge solutions and right 
hand side of the PDE have rank that is smaller than the 
corresponding ranks of the FTT solution in Cartesian 
coordinates.
We also observe that the adaptive coordinate 
transformations performed at times $t>0$ do not 
reduce the solution rank at the time of application,
but they do slow the rank increase as time integration 
proceeds. 
In Figure \ref{fig:PDE_spectra_final_time}(a)-(b) we 
plot the singular values of the FTT solutions at final 
time and note that both FTT-ridge solutions 
have singular values that decay significantly 
faster than the FTT solution in Cartesian coordinates. 
Moreover, the additional coordinate transformations 
performed by the coordinate-adaptive integrator 
causes the singular values of the FTT-ridge solution to decay 
faster than the other FTT-ridge solution that used only
one coordinate transformation at $t=0$. 
In Figure \ref{fig:PDE_countours} we provide 
volumetric plots of the PDE solution in Cartesian 
coordinates and in the transformed coordinate 
system computed with the coordinate-adaptive 
algorithm \ref{alg:coord_adaptive_integrator} 
at time $t = 1$. 
We observe that the reduced rank tensor ridge 
solution appears to be more symmetrical with 
respect to the underlying coordinate axes 
than the solution in Cartesian coordinates. 
}

{  It is important to note that the operator 
$G(\cdot,\bm x) = \bm f(\bm x) \cdot \nabla$ in \eqref{Fokker_Planck_op} 
is a separable operator of rank 
$\bm g = \begin{bmatrix} 1 & 3 & 3 & 1 \end{bmatrix}$. 
For a general linear coordinate transformation $\bm \Gamma$ 
the operator $G_{\bm \Gamma}$ 
(see \eqref{advection_operator_transformed}) 
acting in the transformed coordinate system can be obtained 
using trigonometric identities with (non-optimal) 
separation ranks. 
A more efficient representation of 
$G_{\bm \Gamma}$ 
can be obtained by FTT compression and 
then splitting $G_{\bm \Gamma}$ 
\eqref{split_operator} 
into a sum of three operators 
\begin{equation}
\label{3D_operator_splitting}
G_{\bm \Gamma} = G_{\bm \Gamma}^{(1)}  + 
G_{\bm \Gamma}^{(2)} + G_{\bm \Gamma}^{(3)}  , 
\end{equation}
where $G_{\bm \Gamma}^{(i)}$ 
have ranks 
$\bm g_{\bm \Gamma}^{(i)} = \begin{bmatrix} 1 & 4 & 4 & 1 \end{bmatrix}$ for each $i=1,2,3$. 
Then we apply the operator $G_{\bm \Gamma}$ to 
the FTT-ridge solution at each time 
using the procedure summarized in 
\eqref{split_operator_applied}. 
In this case, since the PDE operator $G$ in 
Cartesian coordinates is separable and low-rank, 
it is not surprising that a 
linear coordinate transformation 
increases the PDE operator rank. 
However by splitting the operator such as 
\eqref{3D_operator_splitting} and performing a FTT 
truncation operation after applying 
each lower rank operator $G_{\bm \Gamma}^{(i)}$ 
we can mitigate the computational cost.
}

\begin{figure}[!t]
\centerline{\footnotesize\hspace{1.3cm} (a)  \hspace{4.8cm}    (b) \hspace{5.05cm} (c) \hspace{.7cm}}
	\centering
\includegraphics[scale=0.27]{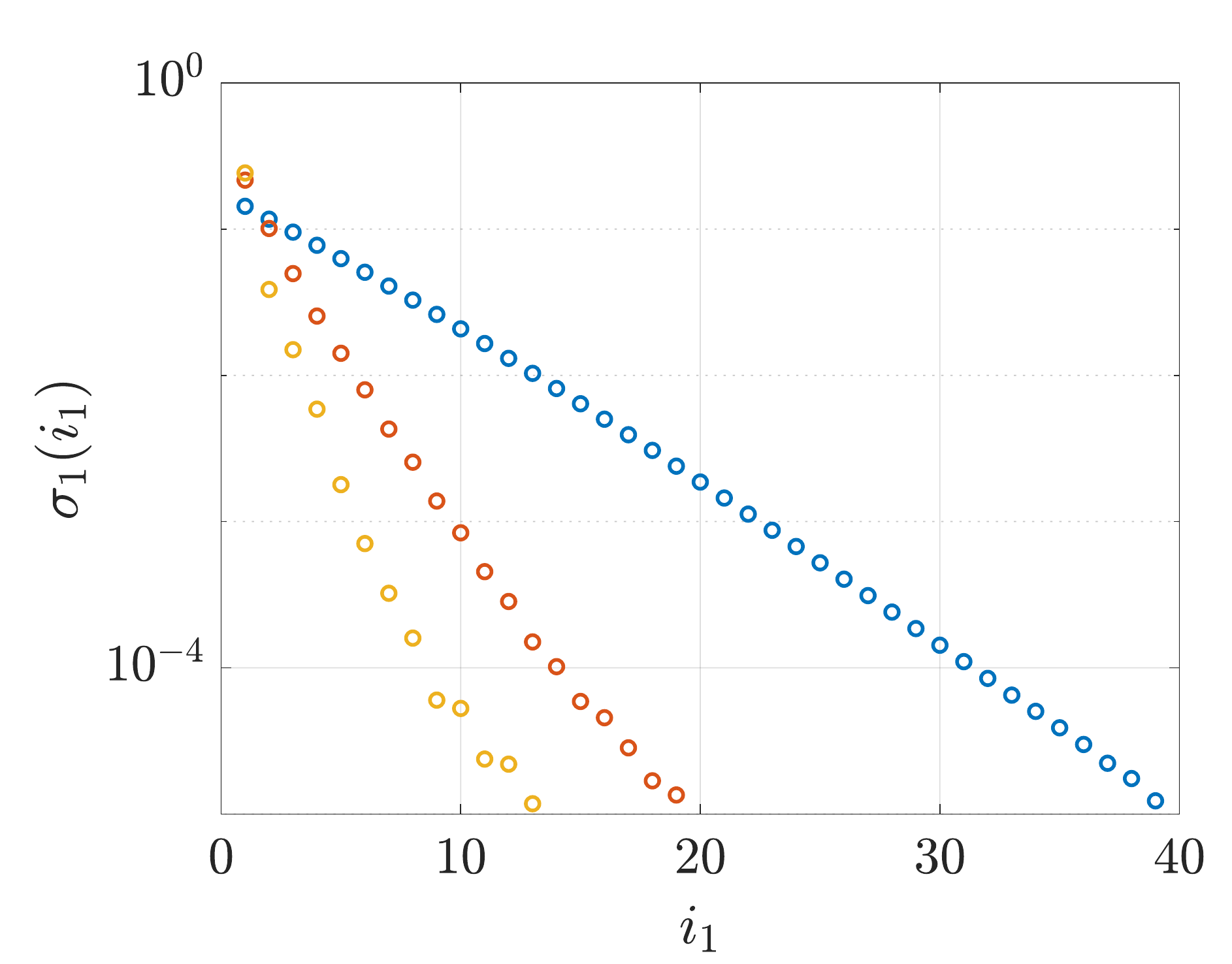} 
\includegraphics[scale=0.27]{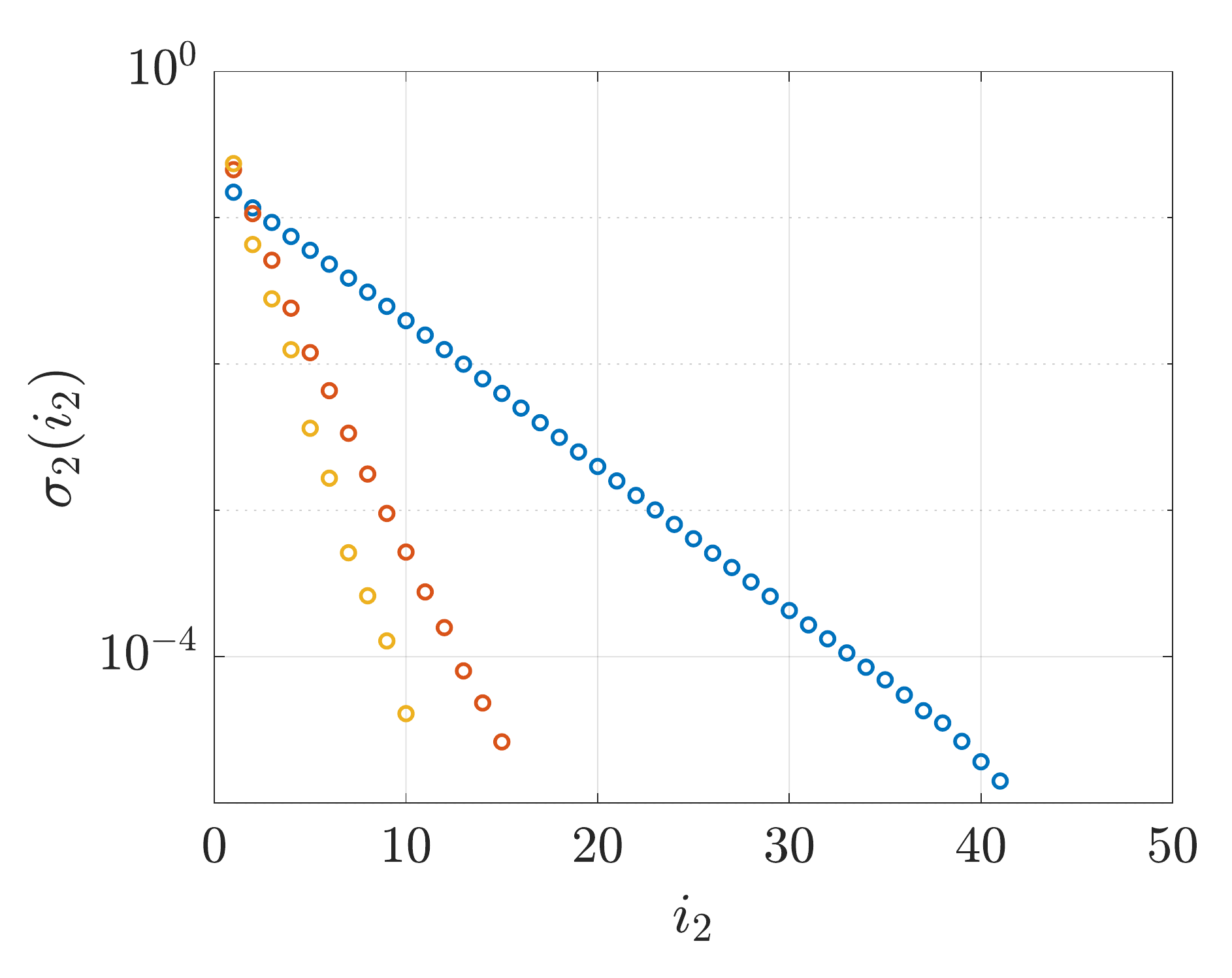} 
\includegraphics[scale=0.27]{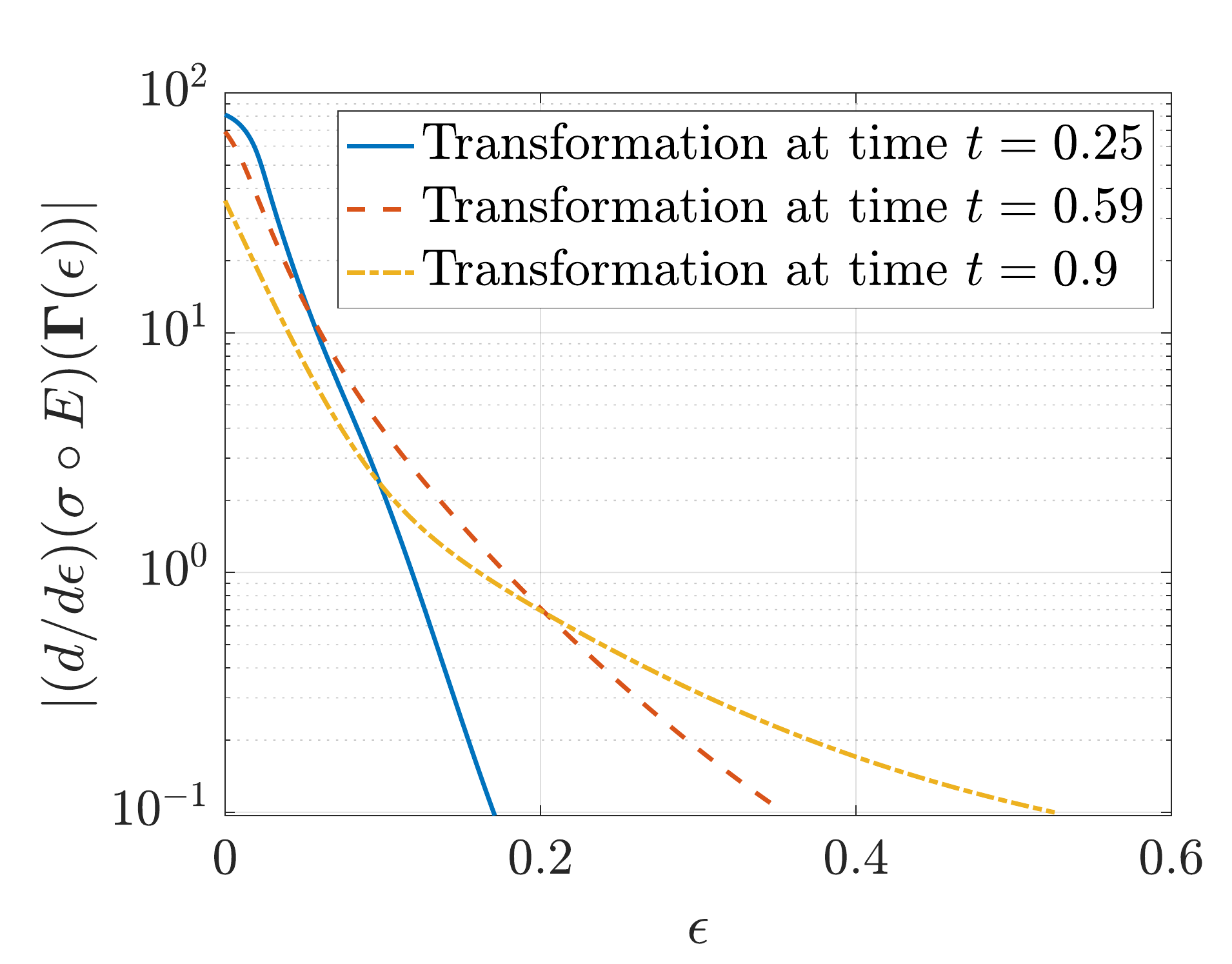}\\
\vspace{-.1cm}
\hspace{-5cm}
\includegraphics[scale=0.35]{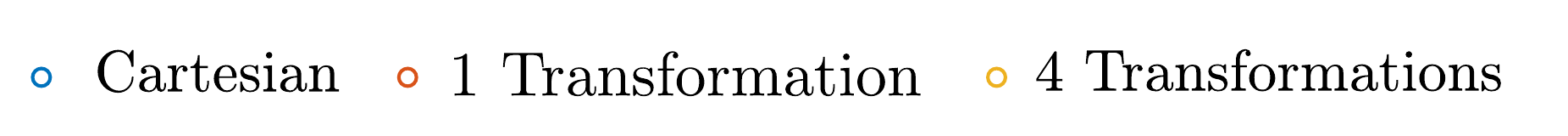} 
\caption{  
(a)-(b) Multilinear spectra of the FTT solutions 
to the 3D advection equation \eqref{Fokker_Planck} at time 
$t = 1$ in Cartesian coordinates and in low-rank coordinates. 
(c) Absolute value of the derivative of the cost function 
in \eqref{rank_min_problem} during gradient 
descent of the 3D advection equation \eqref{Fokker_Planck} 
solutions at times $t>0$.}
\label{fig:PDE_spectra_final_time}
\end{figure}

%
%We also computed a benchmark solution by integrating the 
%PDE \eqref{Fokker_Planck} with operator \eqref{Fokker_Planck_op} 
%on a full tensor product grid of {  $150$} evenly-spaced points in each dimension in the 
%computational domain $\Omega=[-15,15]^3$. 
%%
%To check the accuracy of our PDE solutions in transformed 
%coordinates, we map the transformed solutions back to Cartesian 
%coordinates every $100$ time-steps by solving a 
%PDE of the form \eqref{PDE_for_tensor}. 
%%
%For this coordinate transforming PDE 
%we use the same points, differentiation matrices, 
%and FTT temporal integrator used to solve the 
%advection equation \eqref{Fokker_Planck}. 
%
In Figure \ref{fig:PDE_rank_vs_time}(c) we 
plot the $L^{\infty}$ error between 
the transformed solutions 
and the FTT solution in 
Cartesian coordinates relative 
to the benchmark solution.  
We observe that the $L^{\infty}$ error of 
our coordinate-adaptive solution is very 
close to the {  $L^{\infty}$} error 
of the solution computed in Cartesian coordinates. This implies that 
the error incurred by transforming coordinates, 
integrating the PDE in the new 
coordinate system, and then transforming 
coordinates back is not significant.
{ 
\paragraph{Computational cost}
The CPU-time of integrating the three-dimensional 
advection equation \eqref{Fokker_Planck} 
from $t = 0$ to $t=1$ is 413 seconds when computed 
using FTT in Cartesian coordinates, 
173 seconds when computed using FTT-ridge in low-rank 
coordinates with one coordinate transformation at $t=0$, 
and 299 seconds when computed using the 
coordinate-adaptive FTT  Algorithm \ref{alg:coord_adaptive_integrator}. 
These timings do not include the coordinate 
transformations at time $t=0$ since they were 
not computed using Riemannian gradient descent. 
The timings do include the computation of 
the new coordinate systems at times 
$t>0$.}
\begin{figure}[!t]
\centerline{\footnotesize\hspace{.1cm} $t = 0$ \hspace{1.65cm}    \hspace{5.3cm} $t = 1$  }
	\centering
	\rotatebox{90}{\hspace{1.7cm}\footnotesize FTT (Cartesian coordinates)}
\includegraphics[scale=0.41]{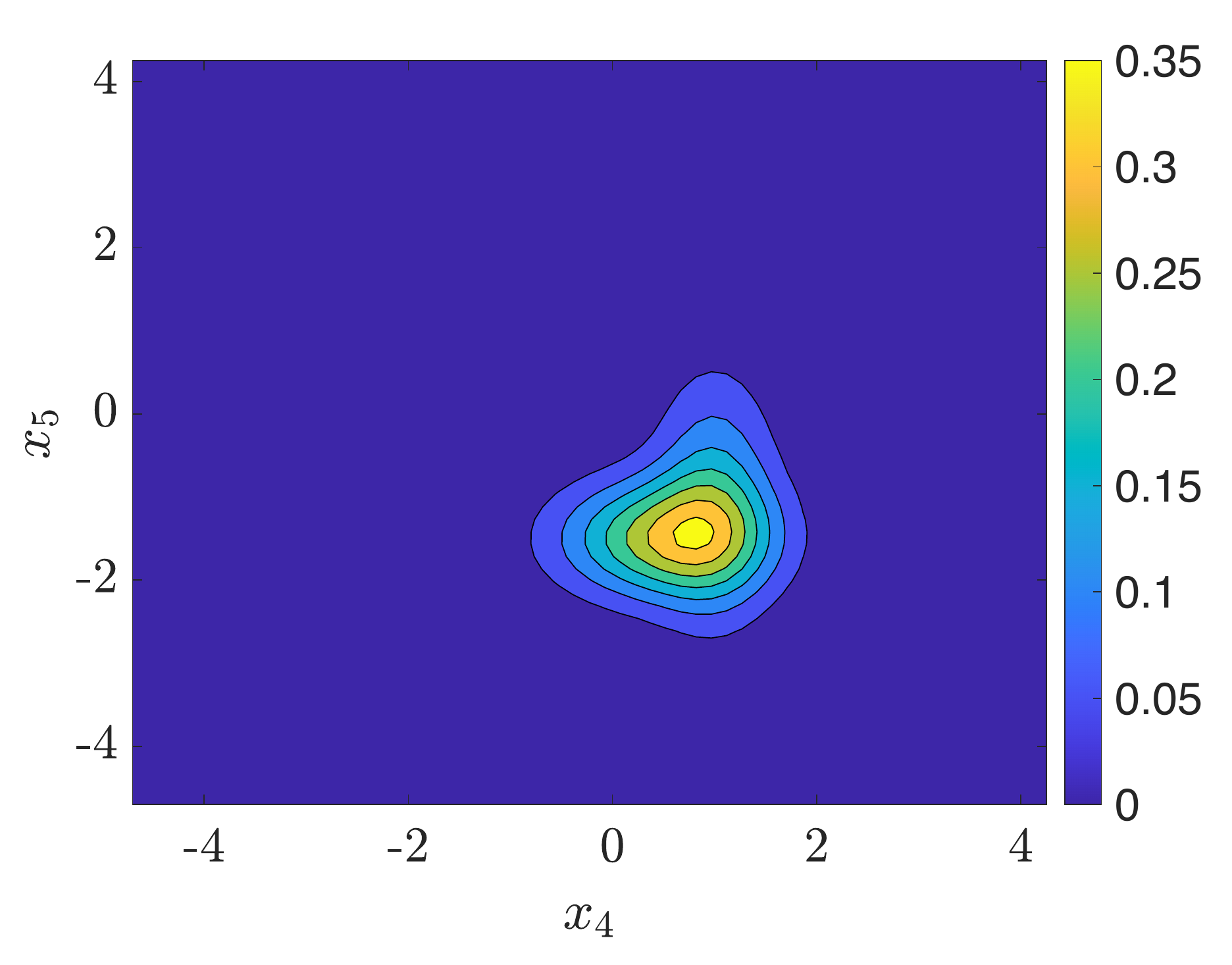} \hspace{-.2cm}
\includegraphics[scale=0.41]{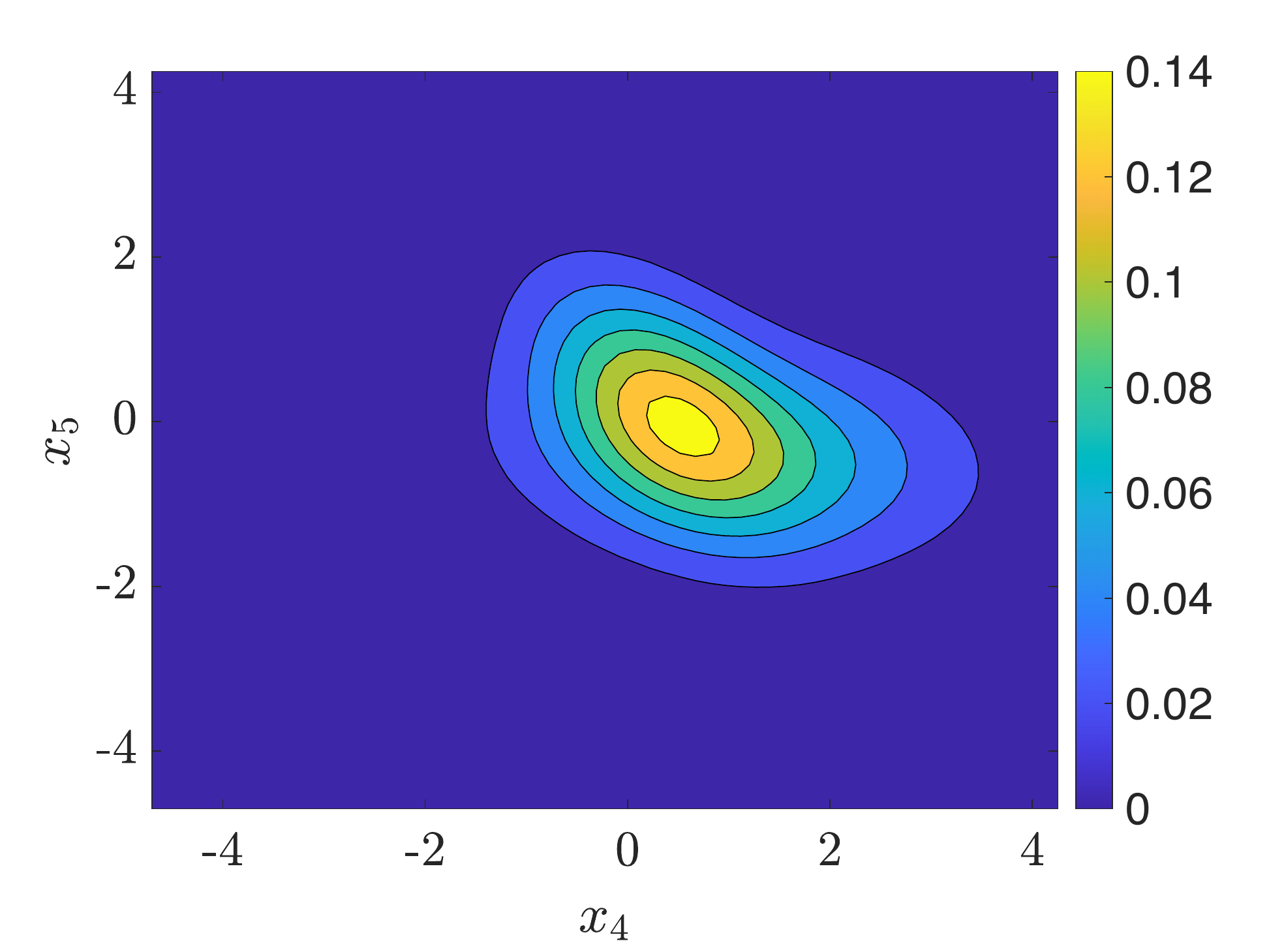} \\
\vspace{0.2cm}
	\centering
	\rotatebox{90}{\hspace{.9cm}\footnotesize FTT-ridge (transformed coordinates)}
\includegraphics[scale=0.41]{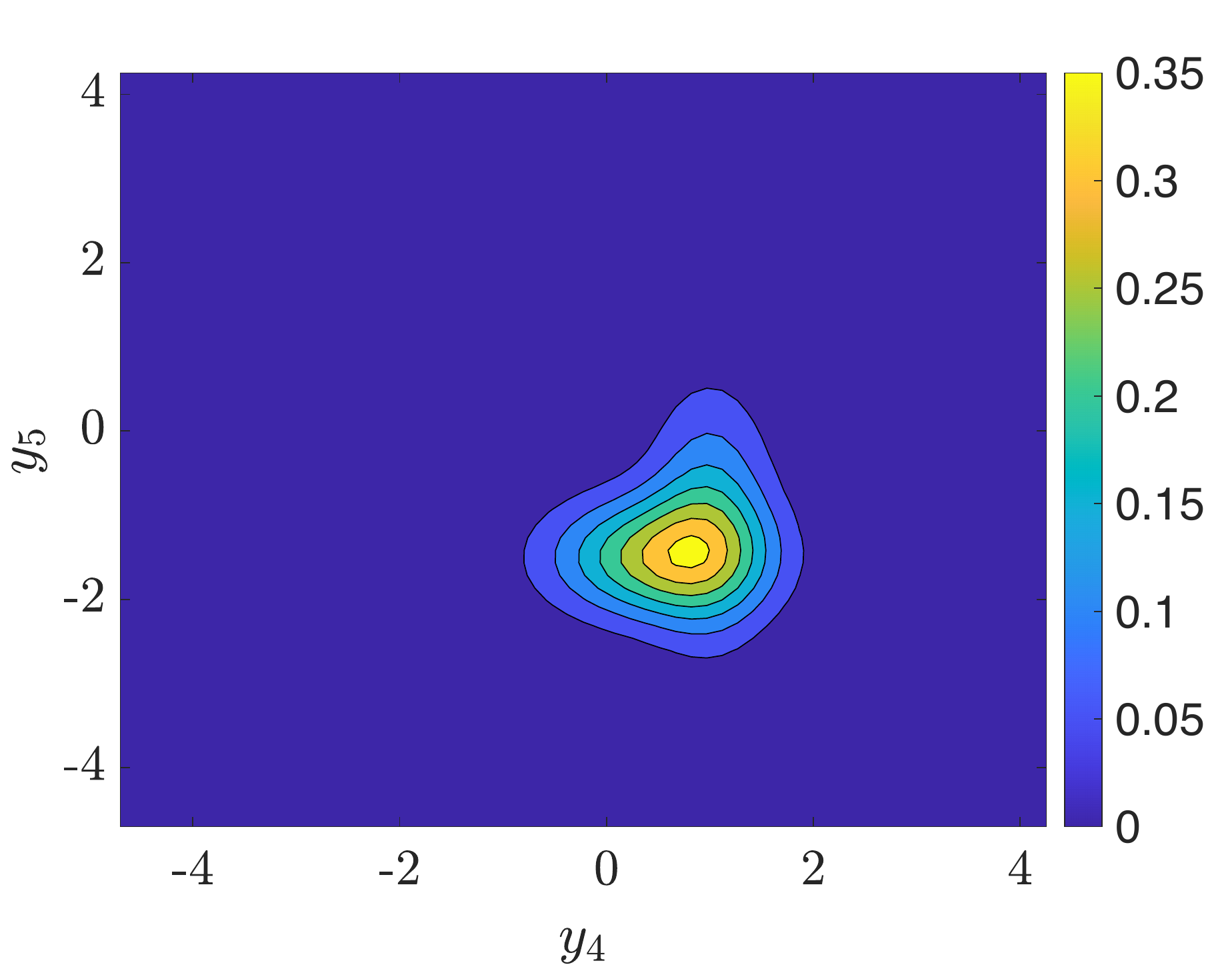} \hspace{-.0cm}
\includegraphics[scale=0.41]{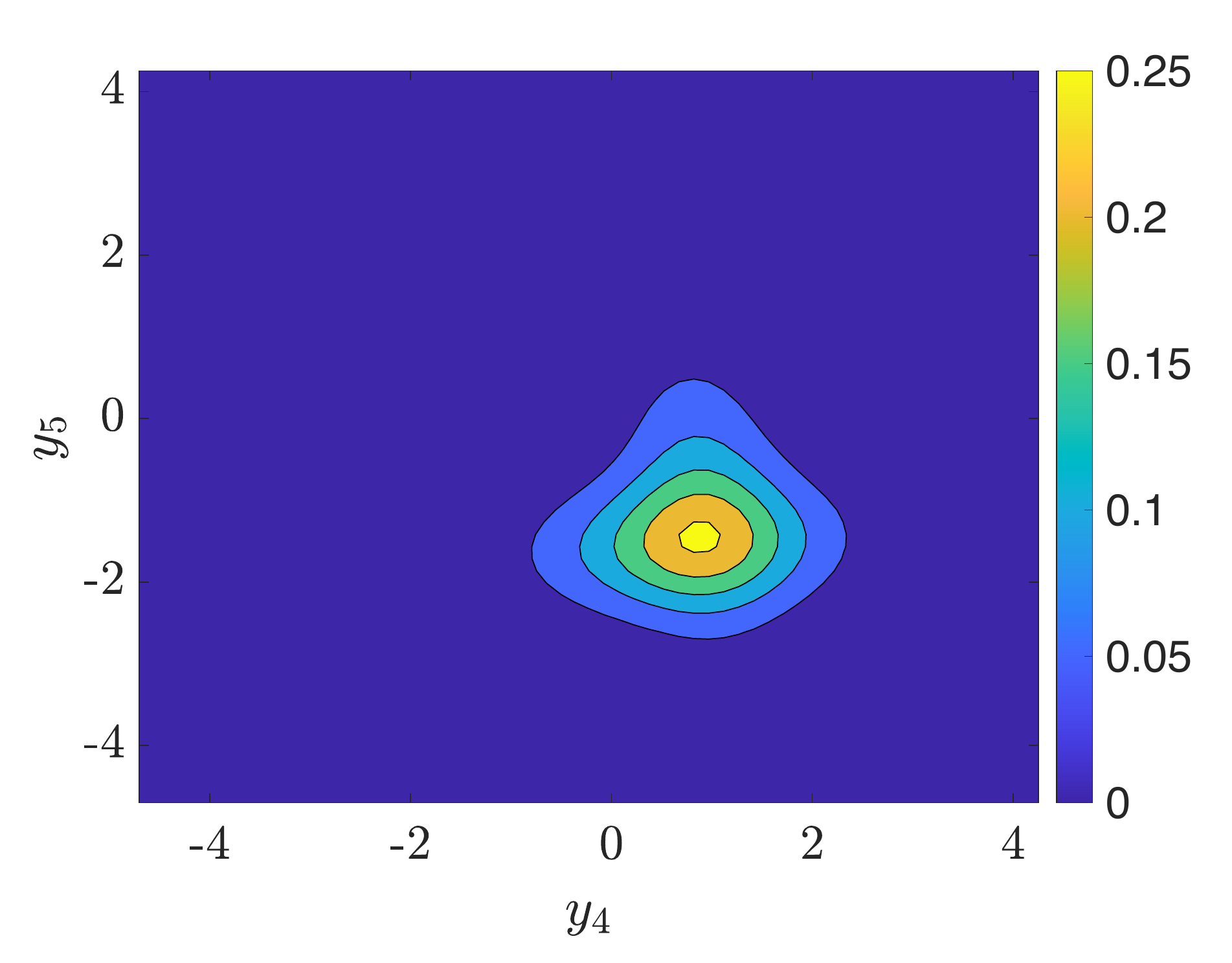} \\
\caption{  Marginal PDFs of the solution to the 
5D advection equation \eqref{Fokker_Planck} 
computed in Cartesian coordinates and low-rank 
adaptive coordinates at time $t=0$ and time 
$t = 1$. }
\label{fig:5D_PDE_marginals}
\end{figure}

{ 
\subsubsection{Five-dimensional simulation results}
Finally, we consider the advection equation  
\eqref{Fokker_Planck} in dimension five 
($d=5$) with coefficients 
\begin{equation}
\label{drift_5D}
\bm f(\bm x) = 
\begin{bmatrix}
-x_2 \\
x_3 \\
x_5 \\
-x_2 \\
-x_3
\end{bmatrix}.
\end{equation}
This allows us to test our coordinate-adaptive algorithm for 
a case in which we know the optimal ridge matrix. 
\begin{figure}[!t]
\centerline{\footnotesize\hspace{-0.1cm}   \hspace{5cm}    \hspace{5.5cm} }
	\centering
\includegraphics[scale=0.39]{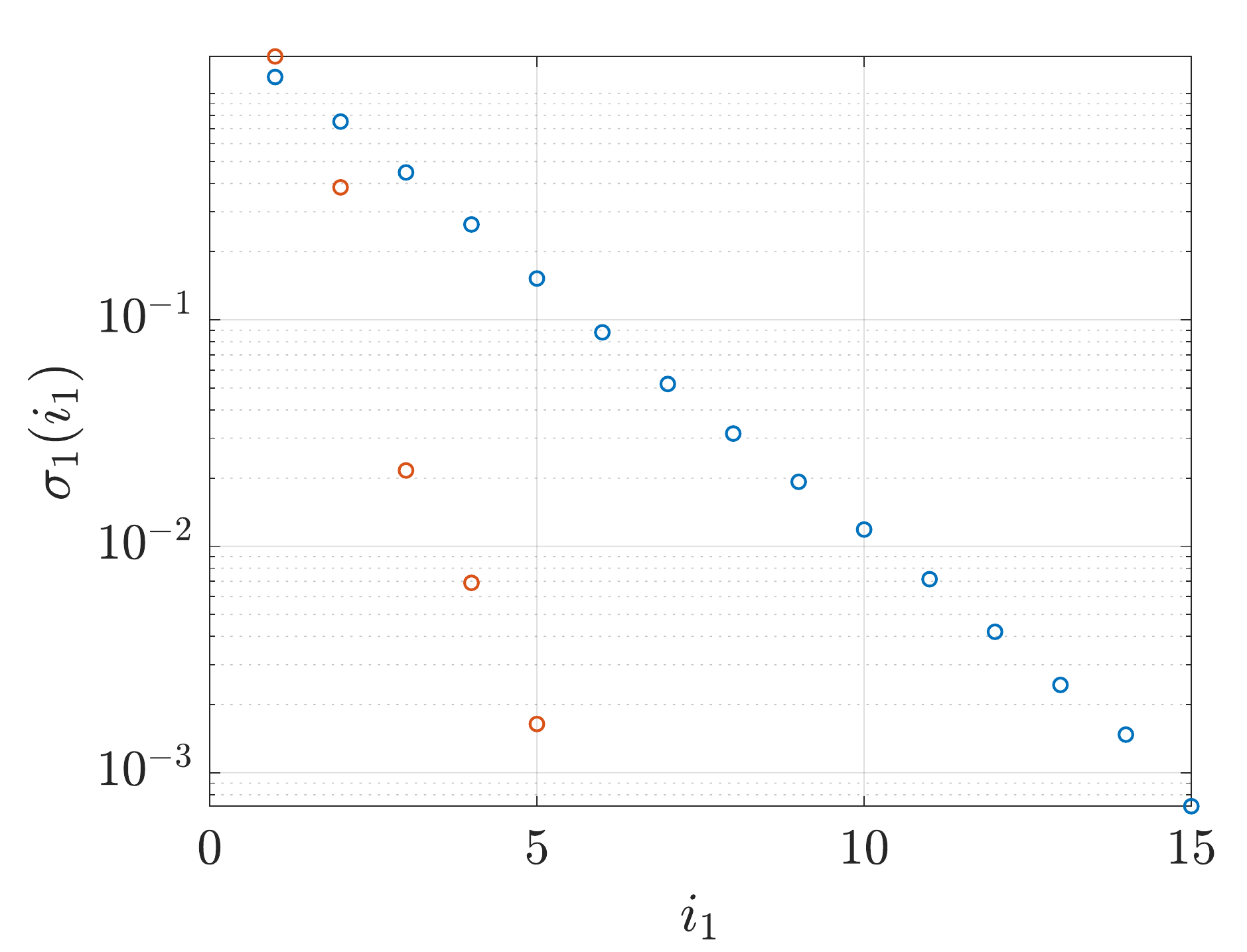} \hspace{.4cm}
\includegraphics[scale=0.39]{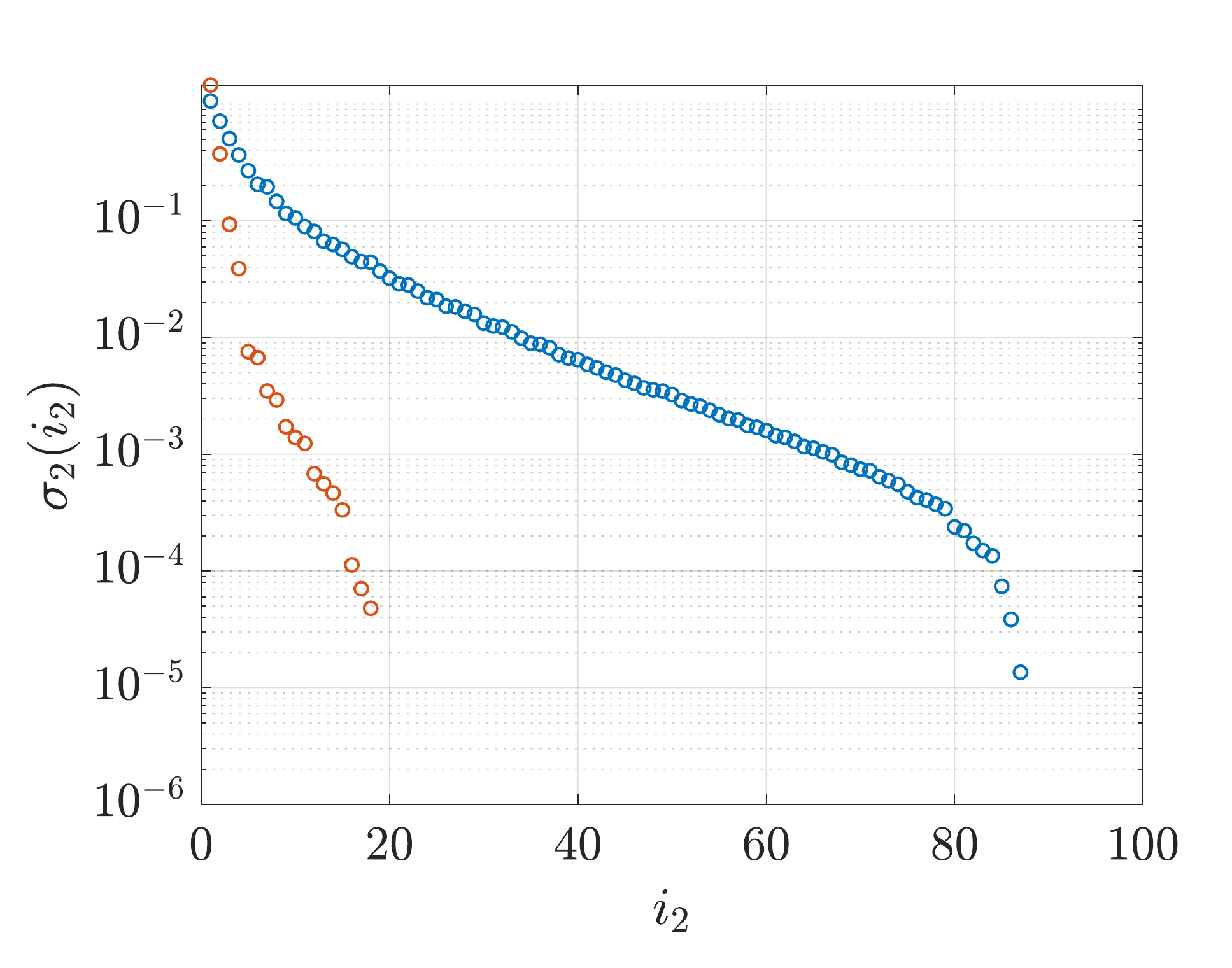} \\
\includegraphics[scale=0.39]{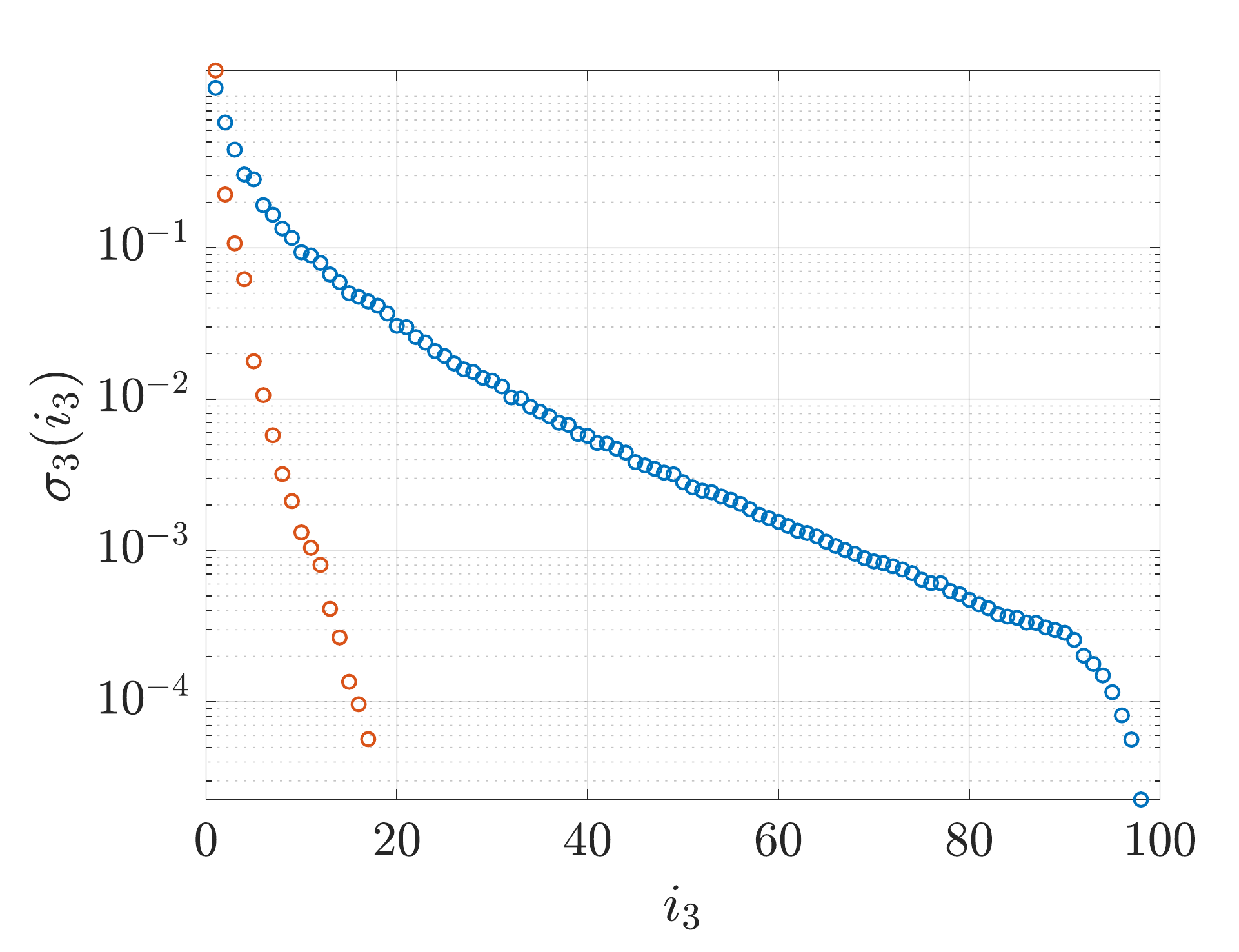} \hspace{.4cm}
\includegraphics[scale=0.39]{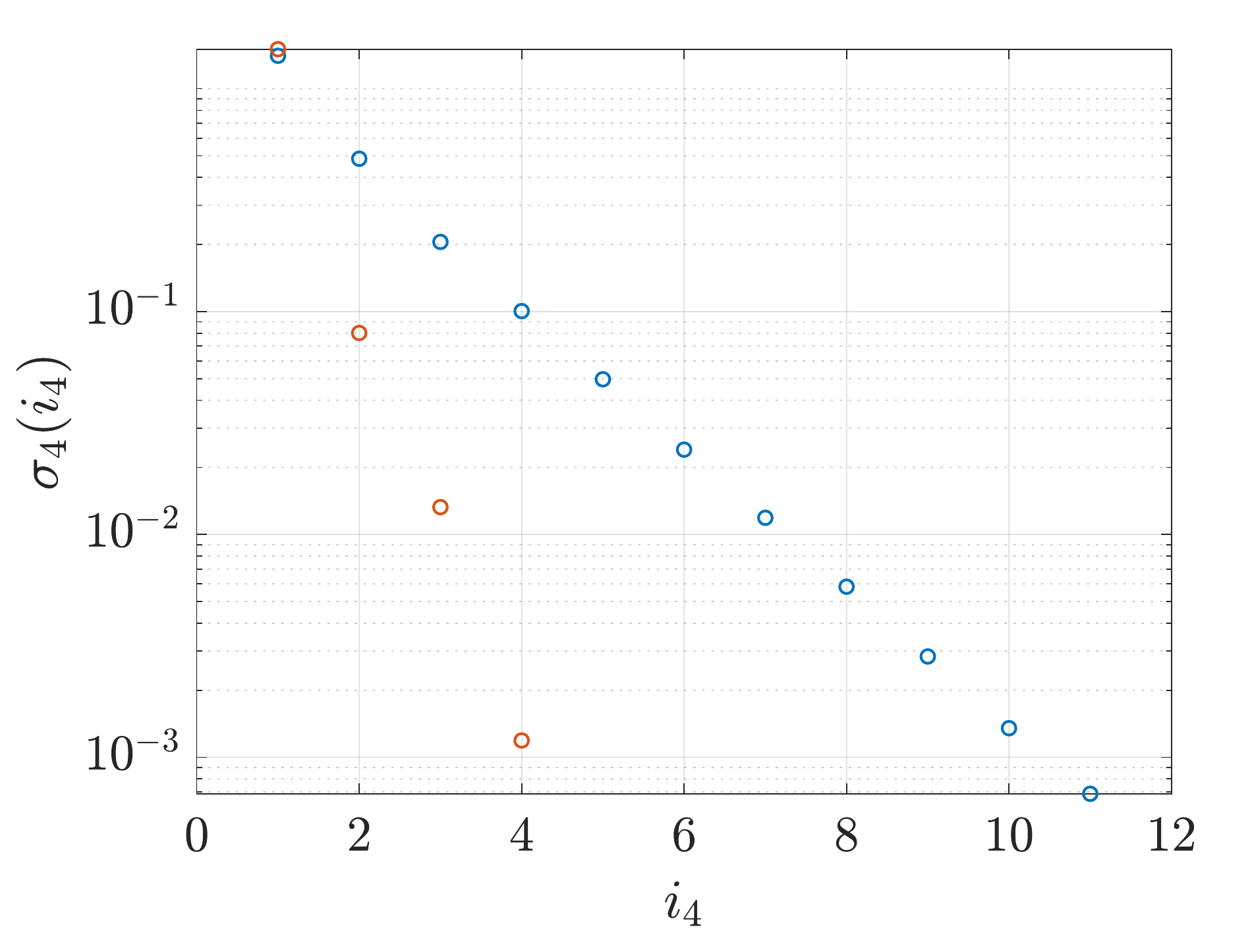} \\
\vspace{-.3cm}
\includegraphics[scale=0.45]{legend.pdf} 
\caption{  Multilinear spectra of the solution to the 
advection 5D PDE \eqref{Fokker_Planck} 
in Cartesian coordinates and transformed 
coordinates at time $t=1$.}
\label{fig:5D_PDE_spectra_final_time}
\end{figure}
In the initial condition \eqref{PDF_IC} we set the following parameters: 
$N_g = 2$, 
\begin{equation}
\label{rotation_5D}
\begin{aligned}
&\bm R^{(1)} = \bm R^{(2)}= \bm I_{5 \times 5}, \\ 
&\bm \beta = \begin{bmatrix}
 1/2 & 2 & 1/2 &  3  &  1/2 \\
 1 & 1/3 & 2 & 1 & 1/2 
 \end{bmatrix},
 \end{aligned}
\end{equation}
and 
\begin{equation}
\bm t = \begin{bmatrix}
 1 & 1 & 1 &  -1 & 1 \\
 0 & 0 & 3/2 & -1/2 & 1/2 
 \end{bmatrix},
\end{equation}
which results in an initial condition 
with FTT rank 
$\begin{bmatrix}
1 & 2 & 2 & 2 & 2 & 1
\end{bmatrix}$. 
Due to the choice of coefficients 
\eqref{drift_5D}, the 
analytical solution to the PDE 
\eqref{Fokker_Planck} 
can be written as a ridge 
function in terms of the PDE 
initial condition 
\begin{equation}
\label{5D_tensor_ridge_soln}
u(\bm x,t) = u_0\left(e^{t\bm B} \bm x \right), 
\end{equation}
where
\begin{equation}
\bm B = 
\begin{bmatrix}
0 & -1 & 0 & 0 & 0 \\
0 & 0 & 1 & 0 & 0 \\
0 & 0 & 0 & 0 & 1 \\
0 & -1 & 0 & 0 & 0 \\
0 & 0 & -1 & 0 & 0 
\end{bmatrix}.
\end{equation}
Thus \eqref{5D_tensor_ridge_soln} is a tensor 
ridge solution to the 5D advection equation 
\eqref{Fokker_Planck} with the same 
rank as the initial condition, i.e., 
in this case there exists a tensor ridge solution 
at each time with FTT rank equal to 
$\begin{bmatrix}
1 & 2 & 2 & 2 & 2 & 1
\end{bmatrix}$. 

%To solve the PDE \eqref{Fokker_Planck} 
%numerically we discretize the hypercube 
%$\Omega=[-15,15]^5$ with 
%$200$ evenly spaced points 
%in each dimension and 
%integrate from time $t=0$ to $t = 1$ 
%using the rank-adaptive 
%step-truncation Euler forward
%integrator with time-step $\Delta t=10^{-3}$ 
%and tensor truncation tolerance $\delta =10^{-5}$. 
%
%
%All spatial derivatives 
%and integrals are computed by applying one-dimensional 
%pseudo-spectral differentiation 
%matrices and quadrature rules \cite{spectral_methods_book} 
%to the tensor modes. 
%%
%

We ran two simulations of the PDE \eqref{Fokker_Planck} 
up to $t = 1$.
%we discretize the hypercube 
%$\Omega=[-15,15]^5$ with 
%$200$ evenly spaced points 
%in each dimension and 
%integrate 
The first simulation is computed with a step-truncation 
method in fixed Cartesian coordinates. 
The second simulation is computed with the 
coordinate-adaptive FTT-ridge tensor method 
that performs coordinate corrections at each time step 
(Algorithm \ref{alg:one_step_one_step}) 
with $\Delta \epsilon = 5 \times 10^{-4}$ and 
$M_{\rm iter} = 1$.  
In Figure \ref{fig:5D_PDE_marginals} we plot 
marginal PDFs of the FTT solution and the 
FTT-ridge solution at initial time $t=0$ 
and final time $t = 1$. 
We observe that the reduced rank FTT-ridge 
solution appears to be more symmetrical with 
respect to the coordinate axes 
than the corresponding function 
on Cartesian coordinates. 
In Figure \ref{fig:5D_PDE_rank_and_error}(a) 
we plot the $1$-norm of the 
FTT solution rank versus time 
and in Figure  \ref{fig:5D_PDE_rank_and_error}(b) 
we plot the FTT rank of the PDE velocity 
vector (right hand side of the PDE) versus 
time for both FTT solutions. 
We observe that the FTT solution rank in Cartesian 
coordinates grows quickly compared to the FTT-ridge 
solution in Cartesian coordinates. 
This is expected due to the existence 
of a low-rank FTT-ridge solution 
\eqref{5D_tensor_ridge_soln}. 
Note that the coordinate-adaptive algorithm 
\ref{alg:one_step_one_step} produces a FTT-ridge 
solution with ridge matrix that is different 
than $e^{t\bm B}$ in \eqref{5D_tensor_ridge_soln}.
The reason can be traced back to the cost function 
we are minimizing, i.e., the Schauder norm (see section 
\ref{sec:non-convex-relaxation}), and the 
fact that we do not fully determine the minimizer 
at each step, but rather perform only one 
$\epsilon$-step in the direction of the Riemannian gradient. 
Although the rank of the FTT-ridge solution 
computed with algorithm \ref{alg:one_step_one_step} 
is larger than the rank of the analytical 
solution \eqref{5D_tensor_ridge_soln}, 
the algorithm still controls the FTT 
solution rank during time integration. 
}
%
%In Figure \ref{fig:5D_PDE_rank_and_error} we plot 
%the absolute value of the derivative of the cost 
%function versus $\epsilon$. 
%
%This quantity is seen to decrease to a small 
%value indicating that the cost function 
%is not decreasing substantially at $\epsilon = 2$. 
%
In Figure \ref{fig:5D_PDE_spectra_final_time} we plot 
the multilinear spectra of the two FTT solutions at 
time $t = 1$. 
{  We observe that the multilinear spectra 
of the FTT-ridge function
decay significantly faster than the 
spectra of the FTT solution
in Cartesian coordinates. }
%
%Thus for any truncation tolerance, the {  FTT-ridge 
%solution} can be stored at a significantly lower 
%cost than the {  FTT solution in Cartesian 
%coordinates}. 
%
%
%
%
%In the numerical computation of \eqref{PDE_for_tensor}, 
%we use the same collocation points, differentiation matrices 
%and FTT integrator used to compute the solution of the Liouville equation.
%

{  For this problem it} is not straightforward 
to compute a benchmark solution 
on a full tensor product grid. If we were to use the 
same resolution as the FTT solutions, i.e., 
$200$ points in each dimension, then each time 
snapshot of the benchmark solution would be an array containing 
$200^5 \approx 3.2\times 10^{11}$ double precision floating point 
numbers. This requires $2.56$ terabytes of memory storage 
per time snapshot.
In lieu of comparing our FTT solutions with a benchmark 
solution, we compared the two FTT solutions with each other. 
{ 
To do so we mapped 
the FTT-ridge solution back to 
Cartesian coordinates every $250$ time steps 
by solving a PDE of the form \eqref{PDE_for_tensor} numerically.
We compared the $(x4,x5)$-marginal PDFs of the 
two solutions and found that the global $L^{\infty}$ norm 
of the difference of the two solution PDFs 
is bounded by $6 \times 10^{-4}$. 
}
%

%In Figure \ref{fig:5D_PDE_rank_and_error}(b) we 
%plot the $L^2$ error between the FTT solution computed 
%in Cartesian coordiantes and the FTT solution that was 
%computed in low-rank coordinates and then mapped 
%back to Cartesian coordinates.
%%
%Observe that the $L^2$ error between the two FTT solutions 
%is reasonably small suggesting that the coordinate mapping does 
%not introduce significant errors.  
%
%
\begin{figure}[!t]
\centerline{\footnotesize\hspace{5.5cm} (a)  \hspace{7cm}    (b) \hspace{5.05cm} }
	\centering
\includegraphics[scale=0.4]{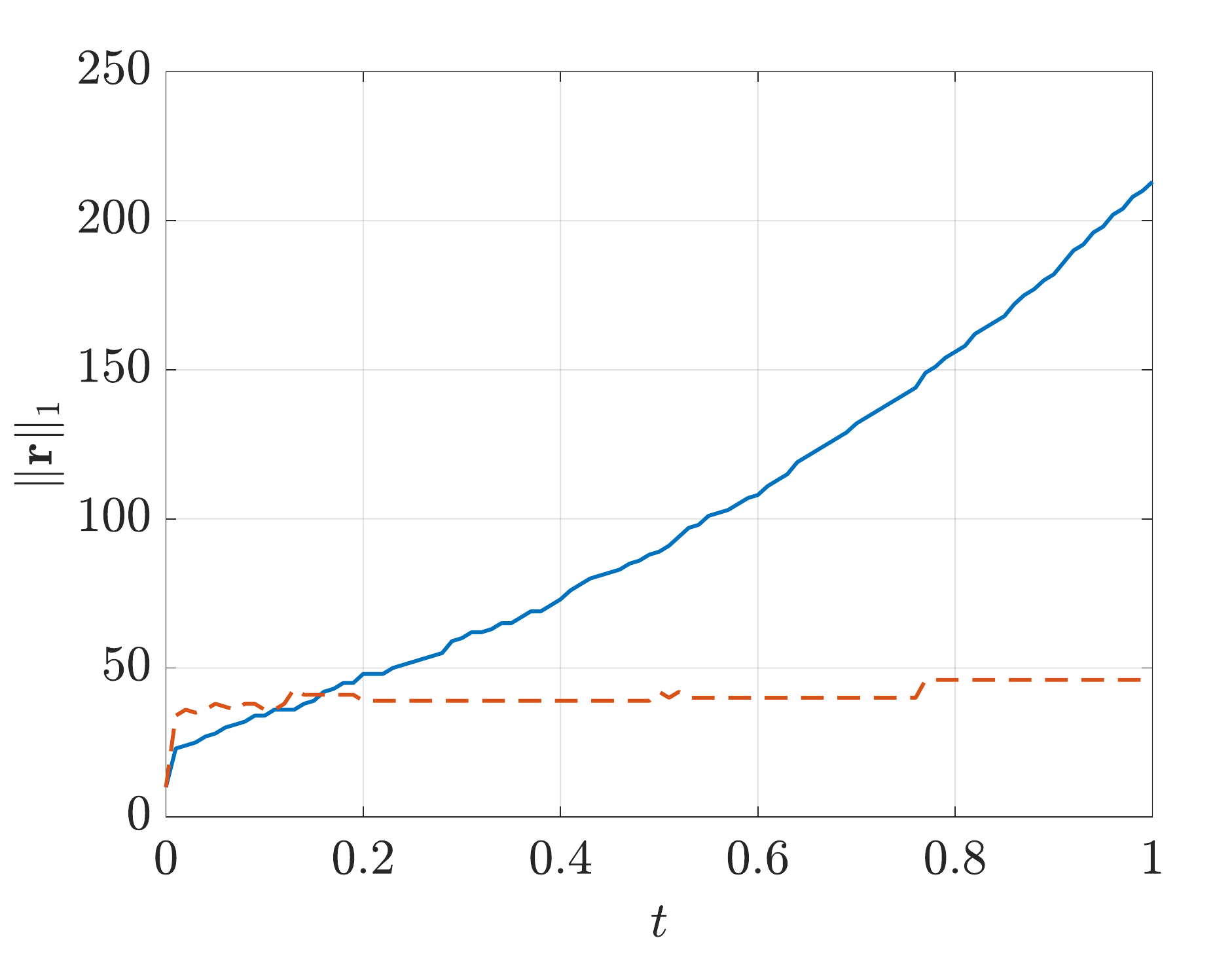}
\includegraphics[scale=0.4]{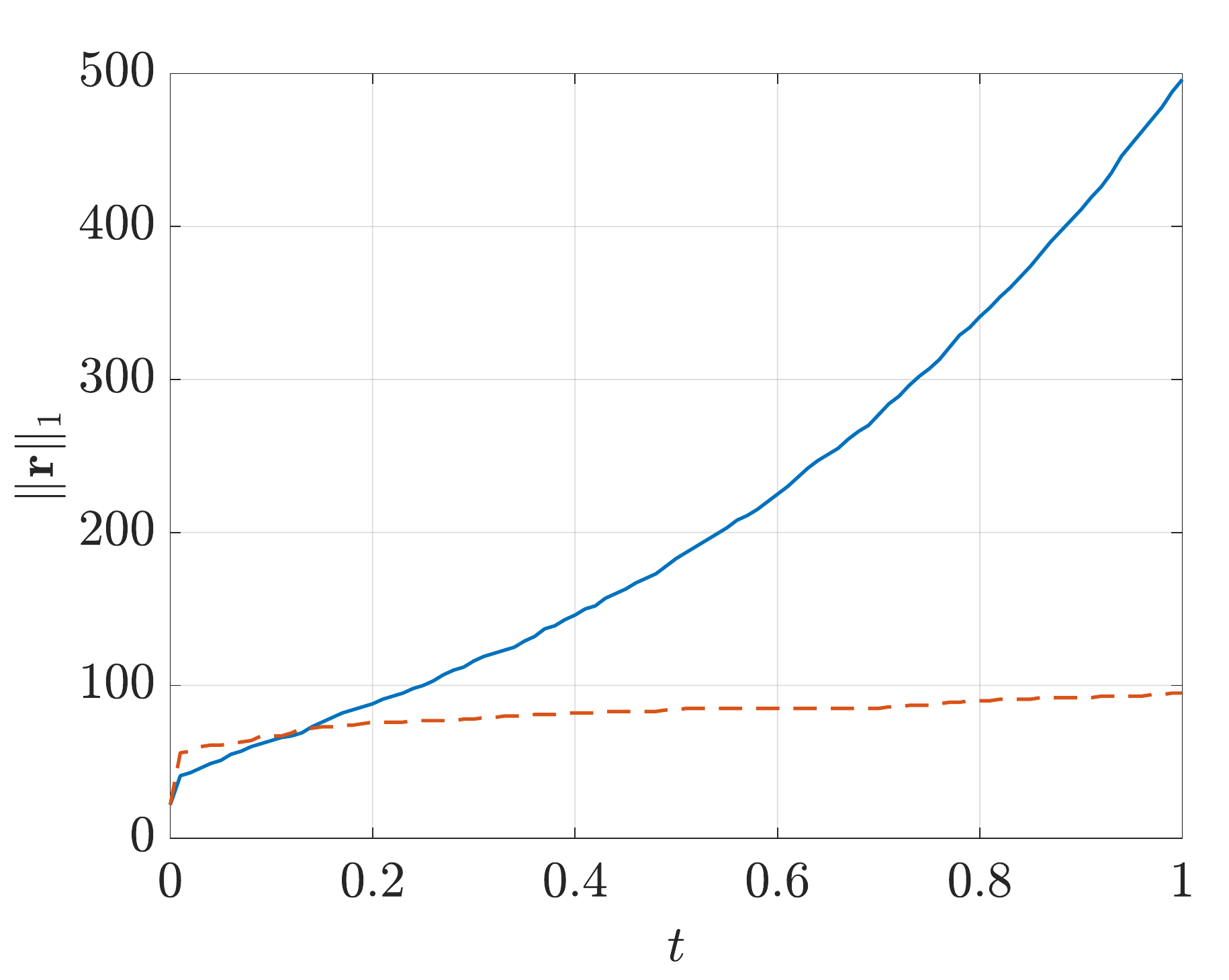}\\
\vspace{-.1cm}
\includegraphics[scale=0.45]{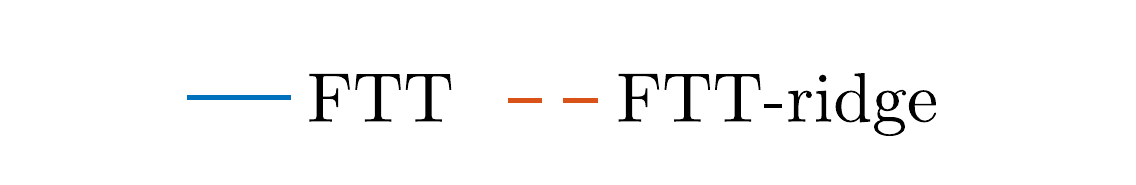} 
\caption{  (a) 1-norm of the solution rank vectors versus time. 
(b) 1-norm of the solution velocity (PDE right hand side) rank vectors versus time. }
\label{fig:5D_PDE_rank_and_error}
\end{figure}

{ 
\paragraph{Computational cost}
The CPU-time of integrating the five-dimensional 
advection equation \eqref{Fokker_Planck} 
from $t = 0$ to $t=1$ is 2046 seconds when computed 
using FTT in Cartesian coordinates and 
2035 seconds when computed using FTT-ridge in low-rank 
coordinates with coordinate corrections at each time step.}

\section*{Acknowledgements}
\noindent
This research was supported by the U.S. Air Force 
Office of Scientific Research grant FA9550-20-1-0174,
and by the U.S. Army Research Office 
grant W911NF1810309.

\appendix

\section{The Riemannian manifold of coordinate transformations}
\label{app:Riemann}
We endow the search space ${\rm SL}_d(\mathbb{R})$ in 
\eqref{relaxed_rank_min} with a Riemannian manifold 
structure. To this end, we first notice that ${\rm SL}_d(\mathbb{R})$ 
is a matrix Lie group over $\mathbb{R}$ and in particular 
is a smooth manifold. 
A point $\bm A \in {\rm SL}_d(\mathbb{R})$ 
corresponds to a linear coordinate 
transformation of $\mathbb{R}^d$ 
with determinant equal to $1$.
A smooth path $\bm \Theta(\epsilon)$ on the manifold 
${\rm SL}_d(\mathbb{R})$ paramaterized by $\epsilon \in (-\delta, \delta)$ 
is a collection of smoothly varying linear coordinate 
transformations with determinant equal to $1$ for 
all $\epsilon \in (-\delta,\delta)$. 
Denote by $\mathcal{C}^1\left( (-\delta,\delta) , 
{\rm SL}_d(\mathbb{R}) \right)$ the collection of 
all continuously differentiable paths $\bm \Theta(\epsilon)$ 
on the manifold ${\rm SL}_d(\mathbb{R})$ parameterized by 
$\epsilon \in (-\delta, \delta)$. 
The tangent space of ${\rm SL}_d(\mathbb{R})$ at the 
point $\bm A \in {\rm GL}_d(\mathbb{R})$ is defined to 
be the collection of equivalence classes of velocities 
associated to all possible curves on ${\rm SL}_d(\mathbb{R})$ 
passing through the point $\bm A$
\begin{equation}
\label{tangent_space}
T_{\bm A} {\rm SL}_d(\mathbb{R}) = \left\{  
\frac{d \bm \Theta(\epsilon)}{d \epsilon}\biggr\vert_{\epsilon=0}: 
\quad \bm \Theta \in \mathcal{C}^1\left( (-\delta,\delta) , 
{\rm SL}_d(\mathbb{R}) \right), \quad  
\bm \Theta(0) = \bm A \right\}.
\end{equation}
It is well-known (e.g., \cite{gradient_flow}) that 
the tangent space of ${\rm SL}_d(\mathbb{R})$ 
at the point $\bm A$ is given by 
\begin{equation}
\label{lie_alg_coset}
T_{\bm A} {\rm SL}_d(\mathbb{R}) = \mathfrak{sl}_d(\mathbb{R}) \bm A = \{ \bm N \bm A : \bm N \in \mathfrak{sl}_d(\mathbb{R}) \},
\end{equation}
where $\mathfrak{sl}_d(\mathbb{R})$ denotes the collection 
of all $d \times d$ real matrices 
with vanishing trace. 
We can easily verify that if 
$\bm \Theta(\epsilon) $ is a smooth collection 
of matrices parameterized by $\epsilon \in (-\delta,\delta)$ with 
$\bm \Theta(0) \in {\rm SL}_d(\mathbb{R}) = \mathfrak{sl}_d(\mathbb{R})$ and 
$d\bm \Theta(\epsilon)/d \epsilon \in T_{\bm \Theta(\epsilon)} {\rm SL}_d(\mathbb{R})$ 
for all $\epsilon$, then $\det(\bm \Theta(\epsilon)) = 1$ for all $\epsilon$, i.e., 
$\bm \Theta(\epsilon) \in {\rm SL}_d(\mathbb{R})$ for all $\epsilon$. 
The proof of this result is a direct application of 
Jacobi's formula \cite{Magnus2019} which states 
\begin{equation}
\label{Jacobi_formula}
\begin{aligned}
\frac{d }{d \epsilon} \det\left(\bm \Theta(\epsilon)\right) &= \det\left(\bm \Theta(\epsilon)\right) {\rm trace}\left(\frac{d \bm \Theta(\epsilon)}{d \epsilon}  \bm \Theta^{-1}(\epsilon) \right) \\
&= \det\left(\bm \Theta(\epsilon)\right) {\rm trace}\left( \bm N \bm \Theta(\epsilon)   \bm \Theta^{-1}(\epsilon) \right) \\
&=  \det\left(\bm \Theta(\epsilon)\right) {\rm trace}\left( \bm N \right) \\
&= 0 , 
\end{aligned}
\end{equation}
since $\bm N \in \mathfrak{sl}_d(\mathbb{R})$. 
Thus the determinant of $\bm \Theta(\epsilon)$ is constant 
and since $\det( \bm \Theta(0) ) = 1$ it follows that $\det(\bm \Theta(\epsilon)) = 1$ 
for all $\epsilon \in (-\delta,\delta)$. 
In the language of abstract differential equations, 
$\mathfrak{sl}_d(\mathbb{R})$ is referred 
to as the Lie algebra associated with the 
Lie group ${\rm SL}_d(\mathbb{R})$. 
In Figure \ref{fig:manifold_sketches}(a)
we provide an illustration of 
a path $\bm \Theta(\epsilon)$ on 
the manifold ${\rm SL}_d(\mathbb{R})$ 
passing through the point $\bm A$ and 
the tangent space 
$T_{\bm A} {\rm SL}_d(\mathbb{R})$ of 
${\rm SL}_d(\mathbb{R})$ at $\bm A$. 
Also depicted in Figure \ref{fig:manifold_sketches} 
is a smooth function $f$ from ${\rm SL}_d(\mathbb{R})$ to another 
smooth manifold $\mathcal{M}$. The image of the 
path $\bm \Theta(\epsilon)$ on 
$\rm{SL}_d(\mathbb{R})$ under $f$ is a 
path $f(\bm \Theta(\epsilon))$ on 
$\mathcal{M}$.
Under this mapping of curves,
we can associate the tangent vector 
$d\bm \Theta(\epsilon)/d \epsilon \vert_{\epsilon=0}$ 
in $T_{\bm A} {\rm SL}_d(\mathbb{R})$ 
with a tangent vector 
$d f(\bm \Theta(\epsilon))/d\epsilon\vert_{\epsilon=0}$ in 
$T_{f(\bm A)} \mathcal{M}$. 
This association gives rise to the notion of 
the directional derivative of a function 
$f : {\rm SL}_d(\mathbb{R}) \to \mathcal{M}$
which we now define.

\vs
\noindent
{\bf Definition A1. }{\em 
%\label{def:directional_derivative}
Let $f$ be a smooth function from the Riemannian manifold
${\rm SL}_d(\mathbb{R})$ to a smooth manifold $\mathcal{M}$. 
The directional derivative of $f$ at the point $\bm A \in {\rm SL}_d(\mathbb{R})$ 
in the direction $\bm V \in T_{\bm A} {\rm SL}_d(\mathbb{R})$ is defined as
\begin{equation}
\label{directional_deriv}
(d_{\bm A} f ) \bm V = \frac{\partial  f(\bm \Theta(\epsilon))}{\partial \epsilon} \biggr\vert_{\epsilon=0},
\end{equation}
where $\bm \Theta(\epsilon)$ is a smooth curve on ${\rm SL}_d(\mathbb{R})$ passing through the 
point $\bm A$ at $\epsilon=0$ with velocity $\bm V$.}

\vs
\noindent
It is a standard exercise of differential 
geometry to verify that the directional derivative
\eqref{directional_deriv} is independent of the choice 
of curve $\bm \Theta(\epsilon)$. 
The map $d_{\bm A} f$ appearing in 
\eqref{directional_deriv} is a linear 
map from $T_{\bm A} {\rm SL}_d(\mathbb{R})$ to $T_{f(\bm A)} \mathcal{M}$ known as 
the differential of $f$. 
\begin{figure}[!t]
\centerline{\footnotesize\hspace{-0.1cm}   \hspace{5cm}    \hspace{5.5cm} }
\centerline{
\includegraphics[scale=0.4]{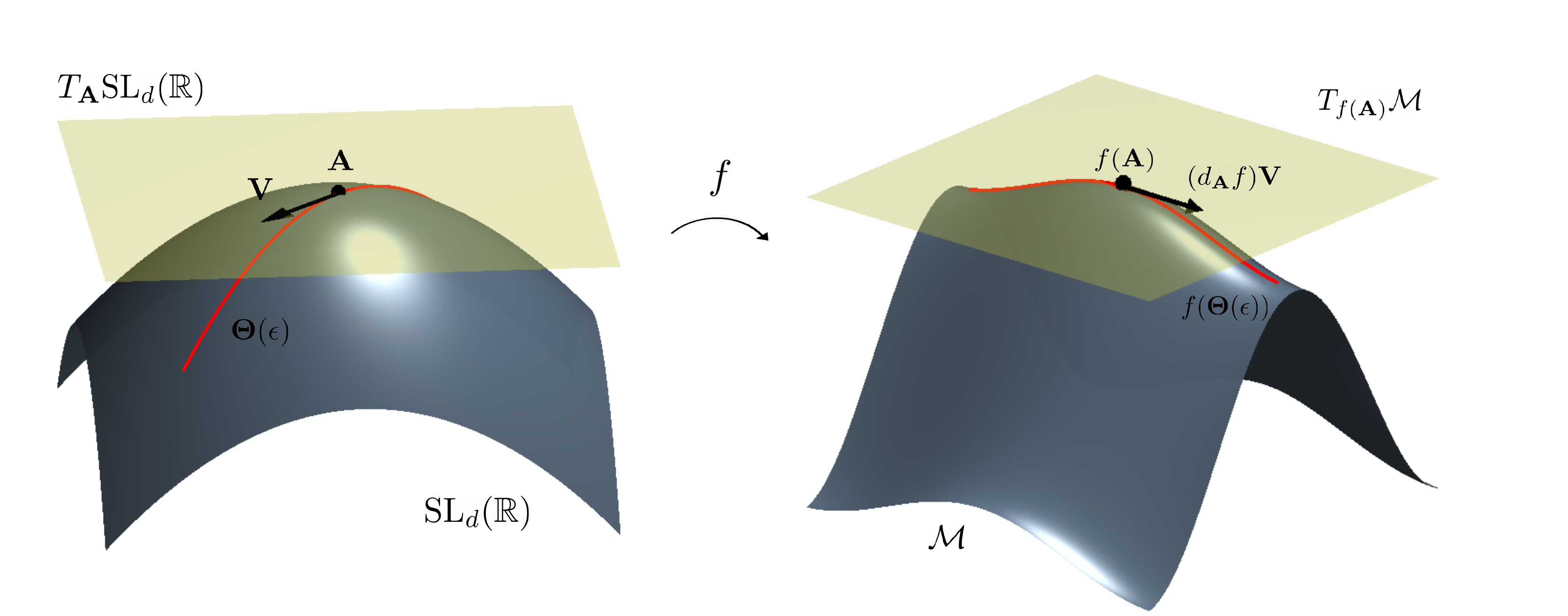}
}
\caption{An illustration of the 
directional derivative of the function 
$f : {\rm SL}_d(\mathbb{R}) \to 
\mathcal{M}$. The curve 
$\bm \Theta(\epsilon)$ on 
${\rm SL}_d(\mathbb{R})$ is 
mapped to the curve 
$f(\bm \Theta(\epsilon))$ on 
$\mathcal{M}$ and the 
directional derivative of 
$f$ at $\bm A \in {\rm SL}_d(\mathbb{R})$ 
in the direction 
$\bm V \in T_{\bm A} 
{\rm SL}_d(\mathbb{R})$ is the 
velocity of the curve $f(\bm \Theta(\epsilon))$ 
at $f(\bm A)$.}
\label{fig:manifold_sketches}
\end{figure}
In Figure \ref{fig:manifold_sketches} 
we provide an illustration of the mapping 
$f$ and its differential. 
The differential and 
directional derivative allow 
us to understand the change in 
$f$ when moving from the point 
$\bm A \in {\rm SL}_d(\mathbb{R})$ 
in the direction of 
the tangent vector $\bm V  \in 
T_{\bm A} {\rm SL}_d(\mathbb{R})$. 
Next, we compute the Riemannian gradient of $f$, 
i.e., a specific tangent vector on ${\rm SL}_d(\mathbb{R})$ 
that points in a direction which makes the function $f$ 
vary the most. 

For functions defined on Euclidean 
space, the connection between 
directional derivative and gradient 
is understood by using the standard 
inner product defined for Euclidean spaces.  
A generalization of the Euclidean inner product for 
an abstract manifold such as ${\rm SL}_d(\mathbb{R})$ is 
the Riemannian metric $( \cdot, \cdot)_{\bm A}$, which is 
a collection of smoothly varying inner products on each tangent space 
$T_{\bm A} {\rm SL}_d(\mathbb{R})$. 
In particular, we define
\begin{equation}
\label{riemannian_metric}
(\bm V, \bm W)_{\bm A} = {\rm trace} \left[ \left( \bm V \bm A^{-1} \right) \left( \bm W \bm A^{-1} \right)^{\top} \right], \qquad \forall \bm A \in {\rm SL}_d(\mathbb{R}), \quad \forall \bm V, \bm W \in T_{\bm A} {\rm SL}_d(\mathbb{R})
\end{equation}
on ${\rm SL}_d(\mathbb{R})$. 
With this Riemannian metric, we can 
define the Riemannian gradient. 

\vs
\noindent
{\bf Definition A2. }{\em 
Let $f$ be a smooth function from 
${\rm SL}_d(\mathbb{R})$ to a smooth manifold $\mathcal{M}$. 
The Riemannian gradient of $f$ at the point 
$\bm A \in {\rm SL}_d(\mathbb{R})$ is the unique vector 
field ${\rm grad } f (\bm A)$ satisfying 
\begin{equation}
(d_{\bm A} f ) \bm V = ( {\rm grad } f (\bm A), \bm V)_{\bm A}, \qquad \forall \bm V \in T_{\bm A} {\rm SL}_d(\mathbb{R}).
\end{equation}}

\noindent
Analogous to the Euclidean case, 
the Riemannian gradient points in 
the direction which $f$ increases the most, 
and, the negative gradient points the 
the direction which $f$ decreases most. 
With this Riemannian geometric structure, 
we proceed by constructing 
a path on ${\rm SL}_d(\mathbb{R})$, known 
as a descent path, which converges to 
a local minimum of the cost function 
$S \circ E$ in \eqref{relaxed_rank_min}.

\section{Theorems and Proofs}
\noindent
    \setcounter{lemma}{0}
    \renewcommand{\thelemma}{\Alph{section}\arabic{lemma}}
        \setcounter{proposition}{0}
    \renewcommand{\theproposition}{\Alph{section}\arabic{proposition}}
\label{sec:appdx_proof}

\noindent First, we show that the tensor rank is invariant 
under translation of functions with compact support. This allows us 
to disregard translations when looking for rank reducing coordinate
flows.

\begin{proposition}
Let $u_{\T} \in L^2_{\mu}(\Omega)$ $(\Omega \subseteq \mathbb{R}^d)$ 
be a rank-$\bm r$ FTT 
with ${\rm supp}(u_{\T}) \subseteq \Omega$ and 
$C_{\bm t} : \mathbb{R}^d \to \mathbb{R}^d$
a coordinate translation, i.e., 
\begin{equation}
C_{\bm t}(\bm x) = \bm x + \bm t,
\end{equation}
where $\bm t \in \mathbb{R}^d$, such that 
${\rm supp}(u_{\T}) \subseteq C_{\bm t}(\Omega)$. 
Then $u_{\T}(C_{\bm t}(\bm x))$ is also a rank-$\bm r$ 
FTT tensor. 
\end{proposition}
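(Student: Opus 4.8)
The plan is to exhibit an explicit FTT representation of $u_{\T}(C_{\bm t}(\bm x))$ with the same core-index structure as that of $u_{\T}$, and then argue that this representation is in fact minimal, i.e. has exact multilinear rank $\bm r$.

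First I would use the structural fact that a translation is separable across coordinates: $C_{\bm t}(\bm x) = (x_1 + t_1, \dots, x_d + t_d)$. Substituting this into the FTT expansion \eqref{FTT_ridge} with $\bm A = \bm I$ — equivalently, into each tensor core $\bm\Psi_i$ — replaces the mode $\psi_i(\alpha_{i-1};x_i;\alpha_i)$ by $\widetilde{\psi}_i(\alpha_{i-1};x_i;\alpha_i) := \psi_i(\alpha_{i-1}; x_i + t_i; \alpha_i)$, which is still a univariate function of $x_i$. Hence $u_{\T}(C_{\bm t}(\bm x))$ has an FTT representation with cores $\widetilde{\bm\Psi}_i$ of exactly the same sizes as the $\bm\Psi_i$, so its multilinear rank is bounded entrywise by $\bm r$. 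The two support hypotheses are precisely what makes this step legitimate: $\mathrm{supp}(u_{\T})\subseteq C_{\bm t}(\Omega)$ guarantees that $u_{\T}\circ C_{\bm t}$ and the translated cores, extended by zero, are genuine elements of $L^2_{\mu}(\Omega)$ (resp. $L^2_{\mu_i}(\Omega_i)$), while $\mathrm{supp}(u_{\T})\subseteq\Omega$ ensures the companion statement needed when the translation is reversed.

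Next I would show the bound is sharp. Fix a bipartition $\{1,\dots,i\}\,|\,\{i+1,\dots,d\}$ and view $u_{\T}$ as an element of $L^2_{\mu_{\le i}}(\Omega_{\le i})\otimes L^2_{\mu_{>i}}(\Omega_{>i})$, whose Schmidt rank is the entry $r_i$ of $\bm r$. Because $C_{\bm t}$ factors as $C_{\bm t_{\le i}}\times C_{\bm t_{>i}}$, we have $u_{\T}\circ C_{\bm t} = (S_i\otimes T_i)\,u_{\T}$, where $S_i$ and $T_i$ are the composition-with-translation operators on the two factor spaces, defined on the compactly supported functions whose support translates back into the respective domains. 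Each is linear and \emph{injective} there (translation is a measure-theoretic isomorphism and the product densities are positive a.e.), and an injective linear map applied on each side of a finite-rank two-tensor preserves its rank: if $u_{\T} = \sum_{k=1}^{r_i} f_k\otimes g_k$ with $\{f_k\}$ and $\{g_k\}$ linearly independent, then $\{S_i f_k\}$ and $\{T_i g_k\}$ are linearly independent as well, so $(S_i\otimes T_i)u_{\T}$ has Schmidt rank exactly $r_i$. Running this over all $i$ shows that $u_{\T}(C_{\bm t}(\bm x))$ has multilinear rank exactly $\bm r$, which is the claim.

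I expect the only genuine subtlety — hence the main obstacle — to be the domain/support bookkeeping: checking that the translated modes and the one-sided slices of $u_{\T}$ remain honest elements of the prescribed weighted $L^2$ spaces (exactly what the two inclusion hypotheses supply), and, relatedly, making precise that translation preserves Schmidt rank even though it is \emph{not} an isometry on weighted $L^2$ when $\mu$ is not translation invariant; the resolution is that rank invariance needs only injectivity, not isometry. Everything else is a routine substitution into the FTT cores.
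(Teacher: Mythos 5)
Your proof is correct, but it takes a different route from the paper's. The paper works directly with the orthogonalized expansion $u_{\T}(\bm x) = \bm Q_{\leq i}(\bm x_{\leq i})\,\bm\Sigma_i\,\bm Q_{>i}(\bm x_{>i})$ and argues, via a change of variables in the defining integrals, that the translated cores $\bm Q_{\leq i}(\bm x_{\leq i}+\bm t_{\leq i})$ and $\bm Q_{>i}(\bm x_{>i}+\bm t_{>i})$ still satisfy the orthogonality conditions \eqref{orthogonal_cores}; the translate is then already an orthogonalized FTT with the \emph{same} diagonal factors $\bm\Sigma_i$, so the multilinear rank (and in fact the whole multilinear spectrum) is unchanged in one stroke. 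You instead split the argument into an upper bound (substituting $x_i+t_i$ into each core gives a representation with cores of the same sizes) and a matching lower bound (translation acts as $S_i\otimes T_i$ across each bipartition, and injective maps on either side of a Schmidt decomposition preserve linear independence, hence Schmidt rank). Your version is longer but more robust: the paper's claim that the translated cores remain orthonormal tacitly requires the product measure $\mu$ to be translation-invariant on the relevant sets (with Lebesgue measure on a box, as in all the paper's examples, this is fine), whereas your injectivity argument needs only that the densities are positive a.e., which you correctly flag as the real subtlety. What you lose relative to the paper is the stronger conclusion that the singular values themselves are preserved, not merely the rank. Both arguments are sound under the paper's working assumptions.
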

\begin{proof}
Let 
\begin{equation}
u_{\T}(\bm x) = \bm Q_{\leq i}(\bm x_{\leq i} ) \bm \Sigma_i \bm Q_{>i}(\bm x_{>i})
\end{equation}
be an orthogonalized expansion of the the rank-$\bm r$ 
FTT $u_{\T}$ as in \eqref{orthog_at_i}. 
Then by a simple change of variables in the integrals 
it is easy to verify that the translated cores 
$\bm Q_{\leq i}^{\top}(\bm x_{\leq i} + 
\bm t_{\leq i}) ,\bm Q_{>i}(\bm x_{> i} + \bm t_{> i})$ 
also satisfy the orthogonality conditions 
\begin{equation}
\begin{aligned}
\left\langle \bm Q_{\leq i}^{\top}(\bm x_{\leq i} + \bm t_{\leq i}) \bm Q_{\leq i}(\bm x_{\leq i} + \bm t_{\leq i}) \right\rangle_{\leq i } = \bm I_{\bm r_i \times r_i}, \\
\left\langle \bm Q_{> i}(\bm x_{>i} + \bm t_{>i}) \bm Q_{> i}^{\top}(\bm x_{>i} + \bm t_{>i}) \right\rangle_{>i} = \bm I_{\bm r_i \times r_i},
\end{aligned}
\end{equation}
and thus 
\begin{equation}
u_{\T}(C_{\bm t}( \bm x)) = \bm Q_{\leq i}(\bm x_{\leq i} + \bm t_{\leq i}) 
\bm \Sigma_i \bm Q_{> i}(\bm x_{> i} + \bm t_{> i}) 
\end{equation}
is an orthogonalized FTT tensor. Hence, $u_{\T} \circ C_{\bm t}$ has 
the same multilinear rank $\bm r$ as $u_{\T}$. 
\end{proof}

\vs
\noindent
Next, we provide a proof of Proposition \ref{prop:riemannian_gradient}. 
To do so we first provide the differentials of the maps 
$E$, $S$ and $C=S \circ E$ in the 
following lemmas. 
\begin{lemma}
\label{lemma:diff_of_E}
The differential of the evaluation map $E$ 
corresponding to ${  u_{\T}}$ (see eqn. \eqref{eval_map}) 
at the point $\bm A \in {\rm GL}_d(\mathbb{R})$ 
in the direction $\bm V \in T_{\bm A}  {\rm GL}_d(\mathbb{R})$ is 
\begin{equation}
(d_{\bm A} E ) \bm V = \nabla v(\bm x)  \cdot  ( \bm V \bm x ) .
\end{equation}
\end{lemma}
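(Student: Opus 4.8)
The plan is to unwind Definition~A1. First I would pick a smooth curve $\bm\Theta(\epsilon)$ on ${\rm GL}_d(\mathbb{R})$ with $\bm\Theta(0)=\bm A$ and $\frac{d\bm\Theta}{d\epsilon}\big|_{\epsilon=0}=\bm V$; since ${\rm GL}_d(\mathbb{R})$ is an open subset of $\mathbb{R}^{d\times d}$ and $\det(\bm A)\neq 0$, the affine curve $\bm\Theta(\epsilon)=\bm A+\epsilon\bm V$ does the job for $|\epsilon|$ small enough. By the definition~\eqref{eval_map} of the evaluation map, $E(\bm\Theta(\epsilon))$ is the function $\bm x\mapsto u_{\T}(\bm\Theta(\epsilon)\bm x)$, so $(d_{\bm A}E)\bm V$ is the element of $L^2_{\mu}(\Omega)$ (using $T_{E(\bm A)}L^2_{\mu}(\Omega)\cong L^2_{\mu}(\Omega)$) obtained by differentiating $\epsilon\mapsto u_{\T}(\bm\Theta(\epsilon)\bm x)$ at $\epsilon=0$.

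Next I would carry out this differentiation pointwise in $\bm x$ by the chain rule: with $\bm w(\epsilon)=\bm\Theta(\epsilon)\bm x\in\mathbb{R}^d$ one has $\frac{d}{d\epsilon}u_{\T}(\bm w(\epsilon))=\nabla u_{\T}(\bm w(\epsilon))\cdot\frac{d\bm w(\epsilon)}{d\epsilon}=\nabla u_{\T}(\bm\Theta(\epsilon)\bm x)\cdot\big(\tfrac{d\bm\Theta}{d\epsilon}\bm x\big)$, and evaluating at $\epsilon=0$ gives $\nabla u_{\T}(\bm A\bm x)\cdot(\bm V\bm x)$. Since $v(\bm x)=u_{\T}(\bm A\bm x)$ --- with $\nabla v(\bm x)$ understood, consistently with \eqref{descent_generator}, as $\nabla u_{\T}$ evaluated at the transformed point $\bm A\bm x$ --- the right-hand side is precisely the asserted expression $\nabla v(\bm x)\cdot(\bm V\bm x)$.

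Two routine points remain to make this rigorous. The first is that the result is independent of the chosen curve $\bm\Theta(\epsilon)$: this is the standard fact noted right after Definition~A1, and it is transparent here because the final expression depends on $\bm\Theta$ only through $\bm\Theta(0)=\bm A$ and $\dot{\bm\Theta}(0)=\bm V$. The second --- and the only point I expect to require care --- is upgrading the pointwise $\epsilon$-derivative to a Fr\'echet derivative in $L^2_{\mu}(\Omega)$, i.e.\ showing that $\big\|\epsilon^{-1}\big(E(\bm\Theta(\epsilon))-E(\bm A)\big)-g\big\|_{L^2_{\mu}}\to 0$ where $g(\bm x)=\nabla v(\bm x)\cdot(\bm V\bm x)$. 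Under the mild regularity and decay assumptions in force here --- $u_{\T}$ a finite FTT built from smooth one-dimensional modes $\psi_i$, hence $C^1$ with controlled growth of $\nabla u_{\T}$ --- the difference quotients $\epsilon^{-1}(u_{\T}(\bm\Theta(\epsilon)\bm x)-u_{\T}(\bm A\bm x))$ are dominated uniformly in small $\epsilon$ by a fixed $L^2_{\mu}$ function, so the dominated convergence theorem yields the $L^2_{\mu}$ convergence and completes the proof.
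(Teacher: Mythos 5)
Your proof is correct and follows essentially the same route as the paper's: choose a curve $\bm\Theta(\epsilon)$ through $\bm A$ with velocity $\bm V$, differentiate $u_{\T}(\bm\Theta(\epsilon)\bm x)$ at $\epsilon=0$ via the chain rule, and identify the result with $\nabla v(\bm x)\cdot(\bm V\bm x)$ under the paper's (implicit) convention that $\nabla v$ denotes $\nabla u_{\T}$ evaluated at $\bm A\bm x$. The additional points you raise --- curve-independence and upgrading the pointwise derivative to an $L^2_{\mu}$ Fr\'echet derivative via dominated convergence --- are refinements the paper omits, not a different argument.
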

\begin{proof}
This result is proven directly from the definitions. 
Let $\bm \Theta(\epsilon)$ be a smooth curve on $ {\rm GL}_d(\mathbb{R})$ 
passing through $\bm A$ with velocity $\bm V$ at $\epsilon = 0$. 
Then 
\begin{equation}
\begin{aligned}
(d_{\bm A} E) \bm V &= \frac{\partial }{\partial \epsilon} E(\bm \Theta(\epsilon))\biggr\vert_{\epsilon=0} \\
&= \frac{\partial }{\partial \epsilon} {  u_{\T}}(\bm \Theta(\epsilon) \bm x )\biggr\vert_{\epsilon=0} \\
&= \nabla {  u_{\T}}(\bm A \bm x) \cdot \left( \bm V \bm x\right) \\
&=\nabla v(\bm x) \cdot \left( \bm V \bm x\right) . 
\end{aligned}
\end{equation}
\end{proof}

\begin{lemma}
\label{lemma:diff_of_sigma}
The differential of $S$ {  (see \eqref{sum_of_multi_sings})} at the point $v \in L^2_{\mu}(\Omega)$ 
in the direction $w \in T_{  v} L^2_{\mu}(\Omega)$ is 
\begin{equation}
(d_{v} S) w = \sum_{i=1}^{d-1} \int_{\Omega} \bm Q_{\leq i} \bm Q_{>i}
 w(\bm x) d\mu(\bm x)
\end{equation}
where $\bm Q_{\leq i}, \bm Q_{>i}$ are FTT cores of $v$ as in eqn. \eqref{orthog_at_i} 
satisfying the orthogonality conditions \eqref{orthogonal_cores}. 
\end{lemma}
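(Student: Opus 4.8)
The plan is to reduce the statement to a single index and differentiate the multilinear singular values one at a time. Since $S=\sum_{i=1}^{d-1}S_i$ by \eqref{multi_sings}--\eqref{sum_of_multi_sings} and the differential is linear in its argument, it is enough to compute $(d_vS_i)w$ for a fixed $i$ and then sum over $i$. Here $S_i(v)=\sum_{\alpha_i=1}^{r_i}\sigma_i(\alpha_i)={\rm trace}(\bm\Sigma_i)$, with $\bm\Sigma_i$ coming from the orthogonalized expansion \eqref{orthog_at_i}. Writing the columns of $\bm Q_{\leq i}$ as $q^{(1)}_{\leq i},\dots,q^{(r_i)}_{\leq i}$ and the rows of $\bm Q_{>i}$ as $q^{(1)}_{>i},\dots,q^{(r_i)}_{>i}$, the starting point is the elementary identity
\begin{equation}
\sigma_i(k)=\int_\Omega q^{(k)}_{\leq i}(\bm x_{\leq i})\,v(\bm x)\,q^{(k)}_{>i}(\bm x_{>i})\,d\mu(\bm x),
\end{equation}
which follows at once from the orthonormality conditions \eqref{orthogonal_cores} applied to the expansion $v=\sum_{k}\sigma_i(k)\,q^{(k)}_{\leq i}q^{(k)}_{>i}$.

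Next I would take a smooth curve $v(\epsilon)=v+\epsilon w+o(\epsilon)$ in $L^2_\mu(\Omega)$ through $v$ with velocity $w$, and differentiate $\sigma_i(k;\epsilon)$ at $\epsilon=0$ using the product rule on the bilinear form above. This produces three terms: one in which the derivative falls on $v$, giving the desired $\int_\Omega q^{(k)}_{\leq i}\,w\,q^{(k)}_{>i}\,d\mu$, and two in which it falls on the singular functions $q^{(k)}_{\leq i}$ or $q^{(k)}_{>i}$. For the latter two I would use that integrating $v$ against $q^{(k)}_{>i}$ over $\Omega_{>i}$ returns $\sigma_i(k)\,q^{(k)}_{\leq i}$ and, symmetrically, integrating $v$ against $q^{(k)}_{\leq i}$ over $\Omega_{\leq i}$ returns $\sigma_i(k)\,q^{(k)}_{>i}$; this reduces each of the two terms to $\sigma_i(k)$ times $\int\dot{q}^{(k)}_{\leq i}q^{(k)}_{\leq i}\,d\mu_{\leq i}$ (respectively $\int\dot{q}^{(k)}_{>i}q^{(k)}_{>i}\,d\mu_{>i}$), which vanish because the singular functions stay normalized along the curve, so $\tfrac{d}{d\epsilon}\|q^{(k)}_{\leq i}(\epsilon)\|^2=0$. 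Hence $d\sigma_i(k;\epsilon)/d\epsilon|_{\epsilon=0}=\int_\Omega q^{(k)}_{\leq i}\,w\,q^{(k)}_{>i}\,d\mu$; summing over $k$ recollapses $\sum_k q^{(k)}_{\leq i}q^{(k)}_{>i}$ into the matrix product $\bm Q_{\leq i}\bm Q_{>i}$, giving $(d_vS_i)w=\int_\Omega \bm Q_{\leq i}\bm Q_{>i}\,w(\bm x)\,d\mu(\bm x)$, and summing over $i=1,\dots,d-1$ finishes the proof.

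The main obstacle is justifying that the singular values and singular functions are differentiable in $\epsilon$, so that the product-rule manipulation above is legitimate; this is precisely where the simplicity (distinctness) of the FTT singular values is needed, through the standard perturbation theory of the compact self-adjoint operators whose spectra define the FTT decomposition, which is the same hypothesis behind Lemma \ref{lemma:continuity_of_cost}. If one prefers to avoid differentiating the factors explicitly, the derivative can instead be pinned down by the one-sided Ky Fan bound $S_i(v+\epsilon w)\ge S_i(v)+\epsilon\sum_{k}\int_\Omega q^{(k)}_{\leq i}\,w\,q^{(k)}_{>i}\,d\mu$, obtained by evaluating the variational characterization of $S_i$ at the unperturbed orthonormal systems, together with the companion bound for the direction $-w$; assuming $S$ is differentiable at $v$ (granted by the simplicity hypothesis), these two inequalities force equality. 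Everything else is a routine change-of-variables and bookkeeping once the bilinear identity for $\sigma_i(k)$ is established, exactly as in the matrix case where $d\big(\sum_k\sigma_k(M)\big)=\sum_k u_k^\top (dM)v_k$.
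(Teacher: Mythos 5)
Your proof is correct and follows essentially the same route as the paper: differentiate along a curve through $v$ and use orthonormality of the cores to show that the terms involving $\partial_\epsilon \bm Q_{\leq i}$ and $\partial_\epsilon \bm Q_{>i}$ drop out, leaving only $\int_\Omega \bm Q_{\leq i}\bm Q_{>i}\,w\,d\mu$. The paper organizes the computation at the matrix level—projecting $\partial_\epsilon\gamma$ onto $\bm Q_{\leq i}^{\top}(\cdot)\bm Q_{>i}^{\top}$ and invoking skew-symmetry of $\langle \bm Q_{\leq i}^{\top}\partial_\epsilon\bm Q_{\leq i}\rangle_{\leq i}$ to zero out the diagonal—whereas you work singular value by singular value via the extraction identity $\sigma_i(k)=\int_\Omega q^{(k)}_{\leq i}\,v\,q^{(k)}_{>i}\,d\mu$ and preservation of normalization; these are the same argument, since normalization being preserved is precisely the vanishing of those diagonal entries, and both proofs (yours explicitly, the paper's tacitly) lean on the simple-spectrum hypothesis for differentiability of the factors.
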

\begin{proof}
Let $\gamma(\epsilon)$ be a smooth curve on $L^2_{\mu}(\Omega)$ passing through $v$ 
at $\epsilon = 0$ with velocity $w$. 
At each $\epsilon$ the function $\gamma(\epsilon)$ admits 
orthogonalizations of the form 
\begin{equation}
\label{gamma_orthog}
\gamma(\epsilon) = \bm Q_{\leq i}(\epsilon) \bm \Sigma_i(\epsilon) \bm Q_{>i}(\epsilon) 
\end{equation}
for each $i = 1,2,\ldots,d-1$. 
Differentiating \eqref{gamma_orthog} with respect 
to $\epsilon$ we obtain 
\begin{equation}
\frac{\partial \gamma}{\partial \epsilon} = \frac{\partial \bm Q_{\leq i}}{\partial \epsilon} \bm \Sigma_i \bm Q_{>i}
+\bm Q_{\leq i} \frac{\partial \bm \Sigma_i}{\partial \epsilon} \bm Q_{>i}  +
\bm Q_{\leq i} \bm \Sigma_i \frac{\partial \bm Q_{>i}}{\partial \epsilon} .
\end{equation}
Multiplying on the left by $\bm Q_{\leq i}^{\top}$ and on the right by $\bm Q_{>i} ^{\top}$, applying the operator $\langle \cdot \rangle_{\leq i, >i}$, 
and evaluating at $\epsilon = 0$ we obtain 
\begin{equation}
\label{evolution_of_Sigma}
\frac{\partial \bm \Sigma_i}{\partial \epsilon} = \left\langle \bm Q_{\leq i}^{\top}  {  w}(\bm x)  \bm Q_{>i}^{\top} \right\rangle_{\leq i, >i}
- \left\langle \bm Q_{\leq i}^{\top} \frac{\partial \bm Q_{\leq i}}{\partial \epsilon} \right\rangle_{\leq i} \bm \Sigma_i - \bm \Sigma_i \left\langle \frac{\partial \bm Q_{>i}}{\partial \epsilon} \bm Q_{>i}^{\top} \right\rangle_{>i},
\end{equation}
where we used 
orthogonality of the 
FTT cores $\bm Q_{\leq i}$ and $\bm Q_{>i}$.
Differentiating the orthogonality constraints 
\eqref{orthogonal_cores} with respect 
to $\epsilon$ we obtain 
\begin{equation}
\label{skew-symm_FTT_cores}
\left\langle \frac{\partial \bm Q_{\leq i}^{\top}}{\partial \epsilon} \bm Q_{\leq i} \right\rangle_{\leq i} = - \left\langle \bm Q_{\leq i}^{\top} \frac{\partial \bm Q_{\leq i}}{\partial \epsilon} \right\rangle_{\leq i}, \qquad 
\left\langle \frac{\partial \bm Q_{>i}}{\partial \epsilon} \bm Q_{> i}^{\top}  \right\rangle_{>i} = - \left\langle \bm Q_{>i} \frac{\partial \bm Q_{> i}^{\top}}{\partial \epsilon} \right\rangle_{>i} ,  \quad \forall \epsilon,  
\end{equation}
which implies that the second two terms on the right hand 
side of \eqref{evolution_of_Sigma} side are 
skew-symmetric and thus have zeros on the diagonal. 
Hence the diagonal entries of $\displaystyle\frac{\partial \bm \Sigma_i}{\partial \epsilon}$ are 
the diagonal entries of the matrix 
$\left\langle \bm Q_{\leq i}^{\top}  {  w}(\bm x)  \bm Q_{>i}^{\top} \right\rangle_{\leq i, >i}$ or written element-wise 
\begin{equation}
\label{evolution_of_one_eig_val}
\frac{\partial S_{i}(\alpha_i)}{\partial \epsilon} = \int q_{\leq i}(\alpha_i)  {  w}(\bm x) 
q_{> i}(\alpha_i) d\mu(\bm x).
\end{equation}
Finally summing \eqref{evolution_of_one_eig_val} over $i = 1,2,\ldots, d-1$ and 
$\alpha_i = 1,2,\ldots, r_i$ and using matrix product notation for the latter summation we 
obtain 
\begin{equation}
\sum_{i=1}^{d-1} \sum_{\alpha_i=1}^{r_i} \frac{\partial S_i(\alpha_i)}{\partial \epsilon} = \int_{\Omega} \bm Q_{\leq i} \bm Q_{>i} {  w}(\bm x) d \mu(\bm x),
\end{equation}
proving the result.
\end{proof}

\vs
\noindent
Combining the results of Lemma \ref{lemma:diff_of_E} and Lemma \ref{lemma:diff_of_sigma} 
with a simple application of the chain rule for differentials we prove the following Lemma. 
\begin{lemma}
\label{lemma:diff_of_composition}
The differential of the function $C = S \circ E$ at the 
point $\bm A \in {\rm GL}_d(\mathbb{R})$ in the direction $\bm V \in T_{\bm A} {\rm GL}_d(\mathbb{R})$ is 
\begin{equation}
d_{\bm A} ( S \circ E ) \bm V = \sum_{i=1}^{d-1} \int_{\Omega} \bm Q_{\leq i} \bm Q_{>i}  \nabla v(\bm x) \cdot \left( \bm V  \bm x \right)  d \mu (\bm x), 
\end{equation}
where $\bm Q_{\leq i}, \bm Q_{>i}$ are 
orthogonal FTT cores of $v(\bm x)$.
\end{lemma}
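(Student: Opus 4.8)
The plan is to prove the lemma by a direct application of the chain rule for differentials of maps between manifolds, using the expression for $d_{\bm A}E$ from Lemma \ref{lemma:diff_of_E} together with the expression for $d_v S$ from Lemma \ref{lemma:diff_of_sigma}. Since ${\rm GL}_d(\mathbb{R})$ is an open subset of the space of $d\times d$ matrices, a tangent vector $\bm V\in T_{\bm A}{\rm GL}_d(\mathbb{R})$ is simply an arbitrary $d\times d$ matrix, so $\bm V\bm x$ is well defined.

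First I would fix a smooth curve $\bm\Theta(\epsilon)$ on ${\rm GL}_d(\mathbb{R})$ with $\bm\Theta(0)=\bm A$ and $d\bm\Theta(\epsilon)/d\epsilon|_{\epsilon=0}=\bm V$, and consider its image $\gamma(\epsilon) := E(\bm\Theta(\epsilon)) = u_\T(\bm\Theta(\epsilon)\bm x)$, which is a smooth curve in $L^2_{\mu}(\Omega)$ passing through $v(\bm x)=u_\T(\bm A\bm x)$ at $\epsilon=0$. By the curve-based definition of the differential and Lemma \ref{lemma:diff_of_E}, the velocity of $\gamma$ at $\epsilon=0$ is
\begin{equation}
w(\bm x) := \frac{\partial \gamma(\epsilon)}{\partial \epsilon}\biggr\vert_{\epsilon=0} = (d_{\bm A}E)\bm V = \nabla v(\bm x)\cdot(\bm V\bm x) \in T_v L^2_{\mu}(\Omega).
\end{equation}

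Next I would evaluate $C=S\circ E$ along the same curve and differentiate at $\epsilon=0$. By definition $d_{\bm A}(S\circ E)\bm V = \frac{d}{d\epsilon}S(\gamma(\epsilon))|_{\epsilon=0}$, and since $\gamma$ is a smooth curve in $L^2_{\mu}(\Omega)$ through $v$ with velocity $w$, Lemma \ref{lemma:diff_of_sigma} applied to this curve gives
\begin{equation}
\frac{d}{d\epsilon}S(\gamma(\epsilon))\biggr\vert_{\epsilon=0} = (d_v S)w = \sum_{i=1}^{d-1}\int_{\Omega} \bm Q_{\leq i}\bm Q_{>i}\, w(\bm x)\, d\mu(\bm x),
\end{equation}
where $\bm Q_{\leq i},\bm Q_{>i}$ are orthogonal FTT cores of $v(\bm x)$. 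Substituting $w(\bm x)=\nabla v(\bm x)\cdot(\bm V\bm x)$ then yields exactly the claimed formula.

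I expect the only genuine subtlety — the point to handle with care rather than the substitution above — is the legitimacy of the chain-rule step in this infinite-dimensional setting: one must check that $S$ is actually differentiable at $v=E(\bm A)$ (which is where simplicity of the FTT singular values enters, cf.\ Lemma \ref{lemma:continuity_of_cost}), that the vector $w=(d_{\bm A}E)\bm V$ indeed lies in the tangent space $T_v L^2_{\mu}(\Omega)$ on which $d_v S$ is defined, and that $d_{\bm A}(S\circ E)\bm V$ is independent of the chosen curve $\bm\Theta(\epsilon)$. Each of these is a routine verification using the regularity of the evaluation map $E$ and the established differentiability of $S$, after which the result follows immediately.
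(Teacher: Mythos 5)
Your proof is correct and follows exactly the route the paper takes: the paper obtains this lemma by "combining the results of Lemma \ref{lemma:diff_of_E} and Lemma \ref{lemma:diff_of_sigma} with a simple application of the chain rule for differentials," which is precisely your computation of the image-curve velocity $w=(d_{\bm A}E)\bm V$ followed by substitution into $d_vS$. Your closing remarks on differentiability of $S$ and curve-independence are a welcome extra level of care but do not change the argument.
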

Next we provide the Riemannian gradient 
of $C = S \circ E$ when its domain is 
${\rm GL}_d(\mathbb{R})$.
\begin{proposition}
The Riemannian gradient of 
$(S \circ E) : {\rm GL}_d(\mathbb{R}) \to 
\mathbb{R}$ at the point $\bm A \in {\rm GL}_d(\mathbb{R})$ 
is given by  
\begin{equation}
{\rm grad} (S \circ E) (\bm A) = 
\widehat{\bm D} \bm A,
\end{equation}
where
\begin{equation}
\label{Dhat}
\widehat{\bm D} = \sum_{i=1}^{d-1} \int \bm Q_{\leq i} \bm Q_{>i}  \nabla v(\bm x) \left(\bm A\bm x\right)^{\top} d  \mu(\bm x) 
\end{equation}
\end{proposition}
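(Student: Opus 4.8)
The plan is to obtain the Riemannian gradient of $C = S\circ E$ on ${\rm GL}_d(\mathbb{R})$ by starting from the explicit formula for its differential established in Lemma~\ref{lemma:diff_of_composition} and converting that linear functional on the tangent space into a tangent vector using the appropriate inner product. First I would recall that, by Definition~A2, the Riemannian gradient is the unique tangent vector satisfying $(d_{\bm A}C)\bm V = ({\rm grad}\,C(\bm A),\bm V)_{\bm A}$ for all $\bm V\in T_{\bm A}{\rm GL}_d(\mathbb{R})$, where the metric on ${\rm GL}_d(\mathbb{R})$ is the right-translated Frobenius inner product $(\bm V,\bm W)_{\bm A}={\rm trace}[(\bm V\bm A^{-1})(\bm W\bm A^{-1})^{\top}]$ analogous to \eqref{riemannian_metric}.

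Next I would rewrite the differential from Lemma~\ref{lemma:diff_of_composition} in a ``trace against $\bm V$'' form. Writing $\nabla v(\bm x)\cdot(\bm V\bm x) = {\rm trace}[\bm V\bm x\,\nabla v(\bm x)^{\top}] = {\rm trace}[(\nabla v(\bm x)\,\bm x^{\top})^{\top}\bm V]$ and pulling the $i$-sum and the integral inside the trace, I get
\begin{equation}
d_{\bm A}C\,\bm V = {\rm trace}\!\left[ \left(\sum_{i=1}^{d-1}\int_{\Omega}\bm Q_{\leq i}\bm Q_{>i}\,\nabla v(\bm x)\,\bm x^{\top}\,d\mu(\bm x)\right)^{\!\top}\!\bm V\right].
\end{equation}
Here one must be slightly careful: $\bm Q_{\leq i}\bm Q_{>i}$ is a scalar-valued function of $\bm x$ after the averaging/integration against $\nabla v(\bm x)\bm x^\top$, so the object in parentheses is a genuine $d\times d$ real matrix; call it $\widehat{\bm D}_0$. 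Then I need to exhibit a tangent vector $\bm G\in T_{\bm A}{\rm GL}_d(\mathbb{R})$ with $({\rm trace}[\widehat{\bm D}_0^{\top}\bm V]) = {\rm trace}[(\bm G\bm A^{-1})(\bm V\bm A^{-1})^{\top}]$ for all $\bm V$. Since $T_{\bm A}{\rm GL}_d(\mathbb{R})$ is all of $\mathbb{R}^{d\times d}$ (the group is open), this is a plain linear-algebra identity: substituting $\bm V = \bm W\bm A$ the right side becomes ${\rm trace}[(\bm G\bm A^{-1})\bm W^{\top}]$ and the left becomes ${\rm trace}[\widehat{\bm D}_0^{\top}\bm W\bm A] = {\rm trace}[(\bm A\widehat{\bm D}_0^{\top})\bm W] = {\rm trace}[(\widehat{\bm D}_0\bm A^{\top})^{\top}\bm W]$, forcing $\bm G\bm A^{-1} = \widehat{\bm D}_0\bm A^{\top}$ (wait --- let me just match the two sides carefully in the writeup), hence $\bm G = \widehat{\bm D}\,\bm A$ for the $\widehat{\bm D}$ displayed in \eqref{Dhat}, after absorbing the transpose of $\bm A^{\top}\bm A$ into the definition; I would double-check the exact placement of transposes and the $\bm A$-factor so that the final answer is literally $\widehat{\bm D}\bm A$ with $\widehat{\bm D} = \sum_i\int \bm Q_{\leq i}\bm Q_{>i}\,\nabla v(\bm x)(\bm A\bm x)^{\top}d\mu(\bm x)$.

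Finally I would verify the tangency/consistency: $\widehat{\bm D}\bm A$ obviously lies in $T_{\bm A}{\rm GL}_d(\mathbb{R}) = \mathbb{R}^{d\times d}$, so there is nothing to check on ${\rm GL}_d$ (this is precisely why the $\det = 1$ constraint must wait for the ${\rm SL}_d$ version in Proposition~\ref{prop:riemannian_gradient}, where one additionally projects $\widehat{\bm D}$ onto trace-free matrices, explaining the $-(\nabla v^\top\bm A\bm x/d)\bm I$ term). The main obstacle I anticipate is purely bookkeeping: getting every transpose and every left/right $\bm A$ factor in the right place when converting between the Frobenius pairing ${\rm trace}[\widehat{\bm D}_0^\top\bm V]$ and the metric ${\rm trace}[(\bm G\bm A^{-1})(\bm V\bm A^{-1})^\top]$, together with justifying that the $\bm x$-dependent matrix products commute with the integral and the $\langle\cdot\rangle$ averaging operators so that $\widehat{\bm D}_0$ is a well-defined constant matrix. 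Once the index gymnastics is pinned down, the statement follows immediately from Lemma~\ref{lemma:diff_of_composition} and Definition~A2.
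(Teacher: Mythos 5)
Your proposal is correct and follows essentially the same route as the paper: both rest on Definition~A2, the differential formula of Lemma~\ref{lemma:diff_of_composition}, and the trace identity ${\rm trace}(\bm v\bm w^{\top})=\bm v\cdot\bm w$ together with the cancellation $\bm A^{\top}\bm A^{-\top}=\bm I$ in the metric \eqref{riemannian_metric}. The only difference is direction --- the paper verifies that $\widehat{\bm D}\bm A$ pairs correctly against every $\bm V$, while you solve the linear system for the gradient --- and your transpose bookkeeping does close up as claimed, yielding $\bm G=\widehat{\bm D}_0\bm A^{\top}\bm A=\widehat{\bm D}\bm A$.
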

\begin{proof}
To prove this result we check directly using the 
definition of Riemannian gradient. For 
any $\bm A \in {\rm GL}_d(\mathbb{R})$ and 
$\bm V \in T_{\bm A} {\rm GL}_d(\mathbb{R})$ we have 
\begin{equation}
\begin{aligned}
\left( \widehat{\bm D} \bm A, \bm V\right)_{\bm A} &= 
\left( \left[\sum_{i=1}^{d-1} \int \bm Q_{\leq i} \bm Q_{>i}  \nabla v(\bm x) \left( \bm A \bm x \right)^{\top} d \mu(\bm x) \right] \bm A, \bm V \right)_{\bm A} \\ 
&= {\rm trace} \left( \sum_{i=1}^{d-1} \int \bm Q_{\leq i} \bm Q_{>i} \nabla v(\bm x) \bm x^{\top} \bm A^{\top} d \mu(\bm x)  \bm A^{-\top} \bm V^{\top} \right) \\ 
&= \sum_{i=1}^{d-1} \int \bm Q_{\leq i} \bm Q_{>i} {\rm trace}\left( \nabla v(\bm x)  \bm x^{\top} \bm V^{\top} \right) d\mu( \bm x) \\
&= \sum_{i=1}^{d-1} \int \bm Q_{\leq i} \bm Q_{>i}  \nabla v(\bm x) \cdot \left( \bm V  \bm x \right) d\mu(\bm x), 
\end{aligned}
\end{equation}
where in the last equality we used the fact that ${\rm trace}( \bm v \bm w^{\top} ) = \bm v \cdot \bm w$ 
for any column vectors $\bm v, \bm w \in \mathbb{R}^d$. 
\end{proof}

\vs
\noindent
In general, the trace of $\widehat{\bm D}$ is not 
equal to zero and thus $\widehat{\bm D} \bm A$ is not 
an element of the tangent space $T_{\bm A} {\rm SL}_d(\mathbb{R})$ (see eqn. \eqref{lie_alg_coset}). 
In order to obtain the Riemannian gradient 
$\bm D \bm A$ of 
$S \circ E: {\rm SL}_d(\mathbb{R}) \to 
\mathbb{R}$, we modify the 
diagonal entries of $\widehat{\bm D}$ 
to ensure that $\bm D \bm A$ satisfies the properties 
of Riemannian gradient and also belongs 
to the tangent space $T_{\bm A}{\rm SL}_d(\mathbb{R})$. 

\vs
\begin{proof}[Proposition \ref{prop:riemannian_gradient}]
First we prove that $\bm D \bm A$ with 
$\bm D$ defined in 
\eqref{descent_generator} is an element of 
$T_{\bm A}{\rm SL}_d(\mathbb{R})$, 
i.e., we prove that ${\rm trace}(\bm D) = 0:$
\begin{equation}
\begin{aligned}
{\rm trace}(\bm D) &= \sum_{i=1}^{d-1} \int \bm Q_{\leq i} \bm Q_{>i} 
{\rm trace}\left( \nabla v(\bm x) \left( \bm A \bm x\right)^{\top} - \frac{\nabla v(\bm x)^{\top} \bm A \bm x}{d} \bm I_{d \times d} \right) d \mu(\bm x) \\
&= \sum_{i=1}^{d-1} \int \bm Q_{\leq i} \bm Q_{>i}  \left[
{\rm trace}\left( \nabla v(\bm x) \left(\bm A \bm x\right)^{\top} \right) - {\rm trace} \left(\frac{\nabla v(\bm x)^{\top} \bm A \bm x}{d} \bm I_{d \times d} \right) \right] d \mu(\bm x). 
\end{aligned}
\end{equation} 
It is easy to verify that ${\rm trace}\left( \nabla v(\bm x) \left(\bm A \bm x\right)^{\top}  \right) = {\rm trace} \left(\displaystyle\frac{\nabla v(\bm x)^{\top} \bm A \bm x}{d} \bm I_{d \times d} \right)$ 
and hence ${\rm trace}(\bm D) = 0$. 
Next we show that $(\bm D \bm A, \bm V)_{\bm A} = d_{\bm A} (S \circ E) \bm V$ for all $\bm A \in {\rm SL}_d(\mathbb{R})$ and $\bm V \in T_{\bm A} {\rm SL}_d(\mathbb{R})$. 
Indeed, for any 
$\bm A \in {\rm SL}_d(\mathbb{R})$ and 
$\bm V \in T_{\bm A} {\rm SL}_d(\mathbb{R})$ 
we have 
\begin{equation}
\begin{aligned}
(\bm D \bm A, \bm V)_{\bm A} &= \sum_{i=1}^{d-1} \int 
\bm Q_{\leq i} \bm Q_{>i} 
{\rm trace} \left[ \left( \nabla v(\bm x) \left( \bm A \bm x \right)^{\top} - \frac{\nabla v(\bm x)^{\top} \bm A \bm x}{d} \bm I_{d \times d} \right)  \left(\bm V \bm A^{-1}\right) ^{\top} \right] d \mu(\bm x) \\
&= \sum_{i=1}^{d-1} \int \bm Q_{\leq i} \bm Q_{>i} \left[
{\rm trace} \left(\nabla v(\bm x) \left(\bm A\bm x\right)^{\top}
 \left( \bm V \bm A^{-1}\right)^{\top} \right) 
-  \frac{\nabla v(\bm x)^{\top} \bm A \bm x}{d} {\rm trace}\left(\left( \bm V \bm A^{-1} \right)^{\top}\right) \right] d \mu(\bm x).
\end{aligned}
\end{equation}
Since $\bm V \in T_{\bm A} {\rm SL}_d(\mathbb{R})$ we have that 
$\bm V = \bm W \bm A$ for some real matrix $\bm W$ with ${\rm trace}(\bm W) = 0$. 
Using this in the preceding equation we have 
\begin{equation}
\begin{aligned}
(\bm D(\bm A) \bm A, \bm V)_{\bm A} 
&= \sum_{i=1}^{d-1} \int \bm Q_{\leq i} \bm Q_{>i} \left[
{\rm trace} \left(\nabla v(\bm x) \bm x^{\top} \bm A^{\top} \bm A^{-\top} \bm V^{\top} \right) 
-  \frac{\nabla v(\bm x)^{\top} \bm A \bm x }{d} {\rm trace}\left( \bm W^{\top} \right)	\right] d \mu(\bm x) \\
&=  \sum_{i=1}^{d-1} \int \bm Q_{\leq i} \bm Q_{>i}
{\rm trace} \left(\nabla v(\bm x) \bm x^{\top}  \bm V^{\top} \right) d\mu(\bm x) \\
&= \sum_{i=1}^{d-1} \int \bm Q_{\leq i} \bm Q_{>i}  \nabla v(\bm x) \cdot \left( \bm V \bm x \right) d\mu(\bm x) \\
 &= d_{\bm A} (S \circ E) \bm V,
\end{aligned}
\end{equation}
completing the proof.
\end{proof}

\begin{lemma}
\label{lemma:continuity_of_cost}
Let $\sigma_i(\alpha_i)$ ($i = 1,2,\ldots,d$, $\alpha_i = 1,2,\ldots,r_i$) be the 
multilinear spectrum of the FTT $v_{\T}(\bm x) \approx u_{\T}(\bm A \bm x)$  and 
assume that for each $i=1,2,\ldots,d$ the real numbers $\sigma_i(\alpha_i)$ 
are distinct for all $\alpha_i=1,2,\ldots,r_i$. Then the cost function 
$(S \circ E)$ is a differentiable at the point $\bm A$. 
\end{lemma}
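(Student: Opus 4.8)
The plan is to exploit the factorization $C = S \circ E$ of \eqref{relaxed_cost} and to establish differentiability of each factor near $\bm A$, so that the chain rule for differentials --- already used to obtain Lemma \ref{lemma:diff_of_composition} --- applies rigorously at $\bm A$. The evaluation map $E$ of \eqref{eval_map} causes no trouble: it is the composition of the linear (hence smooth) assignment $\bm A \mapsto (\bm x \mapsto \bm A\bm x)$ with substitution into the fixed smooth FTT tensor $u_\T$, so $E$ is smooth in $\bm A$, and Lemma \ref{lemma:diff_of_E} already exhibits its differential as a bounded linear map at every $\bm A \in {\rm GL}_d(\mathbb{R})$. Consequently the lemma reduces to a single claim: that $S$ of \eqref{sum_of_multi_sings} is differentiable at the point $v = E(\bm A) = u_\T(\bm A\bm x)$, and the assumption of a simple multilinear spectrum is exactly what is needed for this.

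To prove that $S$ is differentiable at $v$, I would invoke perturbation theory of the singular value decomposition. For each $i = 1,\dots,d-1$, the numbers $\sigma_i(\alpha_i)$ and the orthogonal factors $\bm Q_{\leq i}, \bm Q_{>i}$ in the orthogonalization \eqref{orthog_at_i} arise from the Schmidt decomposition of the $i$-th matricization of $v$; equivalently, $\sigma_i(\alpha_i)^2$ are eigenvalues and the columns of $\bm Q_{\leq i}$ eigenfunctions of the associated compact self-adjoint Gram operator, which depends polynomially --- hence smoothly --- on $v \in L^2_\mu(\Omega)$. The standard fact I would use is that a \emph{simple} eigenvalue of a $\mathcal{C}^1$ family of self-adjoint operators, together with its one-dimensional spectral projection, depends $\mathcal{C}^1$ on the parameter; for a compact operator a simple nonzero eigenvalue is automatically isolated in the spectrum, so no separate gap assumption beyond distinctness (with $\sigma_i(r_i) > 0$) is needed. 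Hence, along any $\mathcal{C}^1$ curve $\gamma(\epsilon)$ through $v$, each $\epsilon \mapsto \sigma_i(\alpha_i)(\gamma(\epsilon))$ is $\mathcal{C}^1$; this is precisely the smoothness of the factors $\bm Q_{\leq i}(\epsilon), \bm\Sigma_i(\epsilon), \bm Q_{>i}(\epsilon)$ that was tacitly used in deriving Lemma \ref{lemma:diff_of_sigma}. Summing the finitely many differentiable functions $\sigma_i(\alpha_i)$ over $i$ and $\alpha_i$ shows that $S$ is differentiable at $v$, with differential the bounded linear functional of Lemma \ref{lemma:diff_of_sigma}; composing with the smooth $E$ and applying the chain rule gives that $C = S \circ E$ is differentiable at $\bm A$, with $d_{\bm A} C$ as in Lemma \ref{lemma:diff_of_composition}. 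Since simplicity of the spectrum is an open condition on $v$, the same argument in fact yields that $C$ is $\mathcal{C}^1$ on a neighborhood of $\bm A$.

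The main obstacle is the second step: converting ``simple singular values'' into honest differentiability of $S$ on $L^2_\mu(\Omega)$. The care needed is in checking that the $i$-th Gram operators are compact and self-adjoint and depend smoothly on $v$ so that operator perturbation theory (Rellich--Kato) applies, and --- more importantly --- in identifying the configurations where $S$ genuinely fails to be differentiable: these are exactly eigenvalue crossings within a matricization (two equal $\sigma_i(\alpha_i)$) or the collision of a retained singular value with $0$, since at such points the SVD factors are only Lipschitz and $S$ inherits a nuclear-norm-type kink. The distinctness (and positivity) hypothesis of the lemma is precisely what rules these out. Everything else --- smoothness of $E$, the chain rule, and reassembling the finite sum --- is routine and is essentially already contained in Lemmas \ref{lemma:diff_of_E}--\ref{lemma:diff_of_composition}.
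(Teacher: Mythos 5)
Your proposal is correct and rests on the same key ingredient as the paper's proof: Rellich--Kato perturbation theory for simple eigenvalues of a smoothly varying compact self-adjoint operator, which gives differentiability of each retained singular value $\sigma_i(\alpha_i;\epsilon)$ and hence of their finite sum. The only difference is organizational --- you factor $C = S\circ E$, differentiate the two maps separately, and then apply the chain rule, whereas the paper applies the perturbation expansion directly to the spectrum of $u_\T(\bm \Theta(\epsilon)\bm x)$ along a curve $\bm \Theta(\epsilon)$ through $\bm A$ --- and your additional remarks (identifying the Gram operators, noting that simplicity is an open condition, and flagging collisions with $0$ or with discarded singular values as the genuine failure modes) are consistent with, and slightly sharper than, the paper's one-line argument.
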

\begin{proof}
Let $\bm \Theta(\epsilon) \in \mathcal{C}^1\left( (-\delta,\delta) , 
{\rm SL}_d(\mathbb{R}) \right)$ with $\bm \Theta(0) = \bm A$. 
The $\epsilon$-dependent multilinear spectrum 
$\sigma_i(\alpha_i;\epsilon)$ ($i = 1,2,\ldots,d$, $\alpha_i = 1,2,\ldots,r_i$) 
of $u_{\T}(\bm \Theta(\epsilon) \bm x)$ 
are given by the eigenvalues of an 
$\epsilon$-dependent self-adjoint compact Hermitian operator. 
In a neighborhood of $\epsilon = 0$ the eigenvalue 
$\sigma_i(\alpha_i;\epsilon)$ admits a series expansion \cite[p. xx]{kato}
\begin{equation}
\sigma_i(\alpha_i;\epsilon) = \sigma_i(\alpha_i ; 0) + \epsilon \hat{\sigma}_i(\alpha_i), 
\end{equation}
and thus $\sigma_i(\alpha_i; \epsilon)$ is differentiable with respect to $\epsilon$ at 
$\epsilon = 0$. 
Hence the sum of all eigenvalues 
$$\displaystyle\sum_{i=1}^{d-1} \sum_{\alpha_i = 1}^{r_i} \sigma_i(\alpha_i;\epsilon)$$ 
is differentiable at $\epsilon = 0$ and thus the cost function $C$ is 
differentiable at $\bm A \in {\rm SL}_d(\mathbb{R})$. 
\end{proof}

%\newpage
%\section*{References}
\bibliographystyle{plain}
\bibliography{bibliography_file}

\begin{thebibliography}{10}

\bibitem{optimization_on_matrices}
P.-A. Absil, R.~Mahony, and R.~Sepulchre.
\newblock {\em Optimization algorithms on matrix manifolds}.
\newblock Princeton University Press, Princeton, NJ, 2008.

\bibitem{Hussam_parr_TT}
H.~Al-Daas, G.~Ballard, and P.~Benner.
\newblock Parallel algorithms for tensor train arithmetic.
\newblock {\em SIAM J. Sci. Comput.}, 44(1):C25--C53, 2022.

\bibitem{Aris}
R.~Aris.
\newblock {\em Vectors, tensors and the basic equations of fluid mechanics}.
\newblock Dover publications, 1989.

\bibitem{Bachmayr}
M.~Bachmayr, R.~Schneider, and A.~Uschmajew.
\newblock Tensor networks and hierarchical tensors for the solution of
  high-dimensional partial differential equations.
\newblock {\em Found. Comput. Math.}, 16(6), 2016.

\bibitem{tensor_comp_nuclear}
J.~A. Bengua, H.~N. Phien, H.~D. Tua, and N.~M. Do.
\newblock Efficient tensor completion for color image and video recovery:
  low-rank tensor train.
\newblock {\em IEEE Trans. Image Process.}, 26(5):2466--2479, 2017.

\bibitem{Bigoni_2016}
D.~Bigoni, A.~P. Engsig{-}Karup, and Y.~M. Marzouk.
\newblock Spectral tensor-train decomposition.
\newblock {\em SIAM J. Sci. Comput.}, 38(4):A2405--A2439, 2016.

\bibitem{MSE}
S.~Blanes, F.~Casas, J.~A. Oteo, and J.~Ros.
\newblock The {M}agnus expansion and some of its applications.
\newblock {\em Phys. Rep.}, 470(5-6):151--238, 2009.

\bibitem{HeyrimJCP_2014}
H.~Cho, D.~Venturi, and G.~E. Karniadakis.
\newblock Numerical methods for high-dimensional probability density function
  equation.
\newblock {\em J. Comput. Phys.}, 315:817--837, 2016.

\bibitem{Constantine_2014}
P.~G. Constantine, E.~Dow, and Q.~Wang.
\newblock Active subspace methods in theory and practice: applications to
  kriging surfaces.
\newblock {\em SIAM J. Sci. Comput.}, 36(4):A1500--A1524, 2014.

\bibitem{Daas_randomized_TT_rounding}
H.~A. Daas, G.~Ballard, P.~Cazeaux, E.~Hallman, A.~Międlar, M.~Pasha, T.~W.
  Reid, and A.~K. Saibaba.
\newblock Randomized algorithms for rounding in the tensor-train format.
\newblock {\em SIAM Journal on Scientific Computing}, 45(1):A74--A95, 2023.

\bibitem{adaptive_rank}
A.~Dektor, A.~Rodgers, and D.~Venturi.
\newblock Rank-adaptive tensor methods for high-dimensional nonlinear {PDEs}.
\newblock {\em J. Sci. Comput.}, 88(36):1--27, 2021.

\bibitem{Dektor_2020}
A.~Dektor and D.~Venturi.
\newblock Dynamically orthogonal tensor methods for high-dimensional nonlinear
  {PDE}s.
\newblock {\em J. Comput. Phys.}, 404:109125, 2020.

\bibitem{Dektor_dyn_approx}
A.~Dektor and D.~Venturi.
\newblock {Dynamic tensor approximation of high-dimensional nonlinear {PDEs}}.
\newblock {\em J. Comput. Phys.}, 437:110295, 2021.

\bibitem{Falco_2016}
A.~Falc\'{o}, W.~Hackbusch, and A.~Nouy.
\newblock Geometric structures in tensor representations.
\newblock {\em arXiv:1505.03027}, pages 1--50, 2015.

\bibitem{hsvd_tensors_grasedyk}
L.~Grasedyck.
\newblock Hierarchical singular value decomposition of tensors.
\newblock {\em SIAM J. Matrix Anal. Appl.}, 31(4):2029--2054, 2009/10.

\bibitem{Grasedyck2017}
L.~Grasedyck and C.~L\"{o}bbert.
\newblock Distributed hierarchical {SVD} in the hierarchical {T}ucker format.
\newblock {\em Numer. Linear Algebra Appl.}, 25(6):e2174, 2018.

\bibitem{Heng2021}
J.~Heng, A.~Doucet, and Y.~Pokern.
\newblock {G}ibbs flow for approximate transport with applications to
  {B}ayesian computation.
\newblock {\em J. R. Stat. Soc. Series B}, 83:156--187, 2021.

\bibitem{spectral_methods_book}
J.~S. Hesthaven, S.~Gottlieb, and D.~Gottlieb.
\newblock {\em Spectral methods for time-dependent problems}, volume~21 of {\em
  Cambridge Monographs on Applied and Computational Mathematics}.
\newblock Cambridge University Press, Cambridge, 2007.

\bibitem{GKSS_2005}
G.~E. Karniadakis and S.~Sherwin.
\newblock {\em Spectral/hp element methods for computational fluid dynamics}.
\newblock Oxford University Press, second edition, 2005.

\bibitem{Trefethen_2005}
A.-K Kassam and L.~N. Trefethen.
\newblock Fourth-order time-stepping for stiff pdes.
\newblock {\em SIAM Journal on Scientific Computing}, 26(4):1214--1233, 2005.

\bibitem{kato}
T.~Kato.
\newblock {\em Perturbation theory for linear operators}.
\newblock Classics in Mathematics. Springer-Verlag, Berlin, 1995.
\newblock Reprint of the 1980 edition.

\bibitem{tensor_high_dim_pde}
B.~N. Khoromskij.
\newblock Tensor numerical methods for multidimensional {PDE}s: theoretical
  analysis and initial applications.
\newblock In {\em C{EMRACS} 2013---modelling and simulation of complex systems:
  stochastic and deterministic approaches}, volume~48 of {\em ESAIM Proc.
  Surveys}, pages 1--28. EDP Sci., Les Ulis, 2015.

\bibitem{Vandereycken_2019}
E.~Kieri and B.~Vandereycken.
\newblock Projection methods for dynamical low-rank approximation of
  high-dimensional problems.
\newblock {\em Comput. Methods Appl. Math.}, 19(1):73--92, 2019.

\bibitem{Lubich_2010}
O.~Koch and C.~Lubich.
\newblock Dynamical tensor approximation.
\newblock {\em SIAM J. Matrix Anal. Appl.}, 31(5):2360--2375, 2010.

\bibitem{Kolda}
T.~Kolda and B.~W. Bader.
\newblock Tensor decompositions and applications.
\newblock {\em SIREV}, 51:455--500, 2009.

\bibitem{orbital_optimization}
C.~Krumnow, L.~Veis, \"O. Legeza, and J.~Eisert.
\newblock Fermionic orbital optimization in tensor network states.
\newblock {\em Phys. Rev. Lett.}, 117:210402, 2016.

\bibitem{non_con_nuclear}
C.~Lu, J.~Tang, S.~Yan, and Z.~Lin.
\newblock Nonconvex nonsmooth low rank minimization via iteratively reweighted
  nuclear norm.
\newblock {\em IEEE Trans. Image Process.}, 25(2):829--839, 2016.

\bibitem{Lubich_2015}
C.~Lubich, I.~V. Oseledets, and B.~Vandereycken.
\newblock Time integration of tensor trains.
\newblock {\em SIAM J. Numer. Anal.}, 53(2):917--941, 2015.

\bibitem{Haoxiang}
H.~Luo and T.~R. Bewley.
\newblock On the contravariant form of the {N}avier-{S}tokes equations in
  time-dependent curvilinear coordinate systems.
\newblock {\em J. Comput. Phys.}, 199:355--375, 2004.

\bibitem{Magnus2019}
J.~R. Magnus and H.~Neudecker.
\newblock {\em Matrix differential calculus with applications in statistics and
  econometrics}.
\newblock John Wiley \& Sons, 2019.

\bibitem{Hubig}
C.~Hubig I.~P. McCulloch and U.~Schollw\"ock.
\newblock Generic construction of efficient matrix product operators.
\newblock {\em Phys. Rev. B}, 95:035129, 2017.

\bibitem{TT-cross}
I.~Oseledets and E.~Tyrtyshnikov.
\newblock T{T}-cross approximation for multidimensional arrays.
\newblock {\em Linear Algebra Appl.}, 432(1):70--88, 2010.

\bibitem{OseledetsTT}
I.~V. Oseledets.
\newblock Tensor-train decomposition.
\newblock {\em SIAM J. Sci. Comput.}, 33(5):2295–--2317, 2011.

\bibitem{Pinkus_ridge}
A.~Pinkus.
\newblock {\em Ridge Functions}.
\newblock Cambridge University Press, 2015.

\bibitem{Fazel_2010}
B.~Recht, M.~Fazel, and P.~A. Parrilo.
\newblock Guaranteed minimum-rank solutions of linear matrix equations via
  nuclear norm minimization.
\newblock {\em SIAM Review}, 52(3):471--501, 2010.

\bibitem{rodgers2020step-truncation}
A.~Rodgers, A.~Dektor, and D.~Venturi.
\newblock Adaptive integration of nonlinear evolution equations on tensor
  manifolds.
\newblock {\em J. Sci. Comput.}, 92(39):1--31, 2022.

\bibitem{rodgers2020stability}
A.~Rodgers and D.~Venturi.
\newblock Stability analysis of hierarchical tensor methods for time-dependent
  {PDEs}.
\newblock {\em J. Comput. Phys.}, 409:109341, 2020.

\bibitem{rodgers2022implicit}
A.~Rodgers and D.~Venturi.
\newblock Implicit integration of nonlinear evolution equations on tensor
  manifolds.
\newblock {\em arXiv:2207.01962}, pages 1--23, 2023.

\bibitem{Rosenblatt}
M.~Rosenblatt.
\newblock Remarks on a multivariate transformation.
\newblock {\em Ann. Math. Stat.}, 23:470--472, 1952.

\bibitem{gradient_flow}
T.~Schulte-Herbr\"{u}ggen, S.~J. Glaser, G.~Dirr, and U.~Helmke.
\newblock Gradient flows for optimization in quantum information and quantum
  dynamics: foundations and applications.
\newblock {\em Rev. Math. Phys.}, 22(6):597--667, 2010.

\bibitem{Townsend_21}
T.~Shi, M.~Ruth, and A.~Townsend.
\newblock Parallel algorithms for computing the tensor-train decomposition.
\newblock {\em arXiv:2111.10448}, pages 1--23, 2021.

\bibitem{Marzouk_2018}
A.~Spantini, D.~Bigoni, and Y.~Marzouk.
\newblock Inference via low-dimensional couplings.
\newblock {\em J. Mach. Learn. Res.}, 19:1--71, 2018.

\bibitem{h_tucker_geom}
A.~Uschmajew and B.~Vandereycken.
\newblock The geometry of algorithms using hierarchical tensors.
\newblock {\em Linear Algebra Appl.}, 439(1):133--166, 2013.

\bibitem{Conjugateflows}
D.~Venturi.
\newblock Conjugate flow action functionals.
\newblock {\em J. Math. Phys.}, (54):113502(1--19), 2013.

\bibitem{venturi2018numerical}
D.~Venturi.
\newblock The numerical approximation of nonlinear functionals and functional
  differential equations.
\newblock {\em Physics Reports}, 732:1--102, 2018.

\bibitem{Venturi_2012}
D.~Venturi, M.~Choi, and G.E. Karniadakis.
\newblock Supercritical quasi-conduction states in stochastic
  {R}ayleigh-{B}\'enard convection.
\newblock {\em International Journal of Heat and Mass Transfer},
  55(13):3732--3743, 2012.

\bibitem{VenturiSpectral}
D.~Venturi and A.~Dektor.
\newblock Spectral methods for nonlinear functionals and functional
  differential equations.
\newblock {\em Res. Math. Sci.}, 8(27):1--39, 2021.

\end{thebibliography}

\end{document}